\documentclass{article}

\usepackage{fullpage}
\usepackage{amsmath}
\usepackage{amsfonts}
\usepackage{amsthm}
\usepackage{amssymb}
\usepackage{comment}
\usepackage{hyperref}
\usepackage[dvipsnames]{xcolor}
\usepackage{bm}
\usepackage{graphicx}

\newtheorem{theorem}{Theorem}[section]
\newtheorem{definition}[theorem]{Definition}
\newtheorem{remark}[theorem]{Remark}
\newtheorem{proposition}[theorem]{Proposition}

\newtheorem{lemma}[theorem]{Lemma}
\newtheorem{assumption}{Assumption}

\def\balpha{\boldsymbol{\alpha}}
\def\bbeta{\boldsymbol{\beta}}

\def\bxi{\boldsymbol{\xi}}


\newcommand\cB{\mathcal B}

\newcommand\cI{\mathcal I}
\newcommand\cJ{\mathcal J}
\newcommand\cK{\mathcal K}
\newcommand\cL{\mathcal L}

\newcommand\cP{\mathcal P}
\newcommand\cS{\mathcal S}

\def\AA{\mathbb{A}}

\newcommand\EE{\mathbb E}

\newcommand\PP{\mathbb P}

\newcommand{\RR}{\mathbb R}


\newcommand{\Bb}{\mathbf{B}}

\newcommand{\bI}{\mathbf{I}}

\newcommand{\Wb}{\mathbf{W}}
\newcommand{\bW}{\mathbf{W}}

\newcommand{\Zb}{\mathbf{Z}}
\newcommand{\bz}{\mathbf{z}}
\newcommand{\bZ}{\mathbf{Z}}


\newcommand{\zb}{\mathbf{z}}

\title{Stochastic Graphon Games: \\ I. The Static Case}
\author{Ren\'e Carmona \and Daniel B. Cooney \and Christy V. Graves \and Mathieu Lauri\`ere}

\begin{document}

\maketitle

\begin{abstract}
    We consider static finite-player network games and their continuum analogs, graphon games. Existence and uniqueness results are provided, as well as convergence of the finite-player network game optimal strategy profiles to their analogs for the graphon games. We also show that equilibrium strategy profiles of a graphon game provide approximate Nash equilibria for the finite-player games. Connections with mean field games and central planner optimization problems are discussed. Motivating applications are presented and explicit computations of their Nash equilibria and social optimal strategies are provided.
\end{abstract}


\vskip 12pt
\hrule
\vskip 12pt

\section{Introduction}
In this paper, we analyze static network games. They are often brought to bear on the solutions of challenging applications of importance when the interactions between agents are restrained by physical or economic constraints. See, for example Jackson's book \cite{jackson2010social}. For pedagogical reasons, we start with finite-player models as for instance, in the paper of Galeotti, Goyal, Jackson, Vega-Redondo, and Yariv \cite{galeotti2010network}. In these models, the level of interaction between each couple of players is quantified by a connection weight. 

As is often the case in game theory, the complexity of these problems grows rapidly with the number of players. To avoid the curse of dimensionality, we seek simplifications in the asymptotic regime when the number of players tends to infinity. This search for limiting objects leads to the notion of graphon as these structures appear as limits of dense networks. The book of Lov\'{a}sz \cite{lovasz2012large} is a standard reference for the theory of graphons. 
So in the limiting regime, we consider models of graphon games. Some of these models were considered in the static case by Parise and Ozdaglar in \cite{parise2018graphon}, and around the same time by Caines and Huang \cite{caines2018graphon} in a dynamic setting.

The paper \cite{parise2018graphon} was our starting point. We consider more involved structures much in the spirit of Bayesian games, introducing a state and random shocks. To be more specific, in our models, each player also considers their neighbors' states instead of considering only their controls. States could simply be noisy versions of their actions, or implicitly defined functions of their actions and their neighbors' states. Below, we introduce several motivating examples which we describe in very broad brushstrokes. These applications are presented to motivate the need for considering the more general structures which we propose. We will return to these applications at the end of the paper to show how they can be solved explicitly with the tools developed in the paper.

The reader will certainly notice similarities between our limiting procedure and the approaches leading to mean field games, anonymous games, or more general continuum player games ubiquitous in the extensive economics literature on the subject. See for example the paper \cite{carmona2004nash} of G.\ Carmona for an abstract axiomatic of Nash equilibria in games with a continuum of players. However, in a typical non-atomic anonymous game, the players interact through the distribution of actions of all the players whereas in the graphon games we introduce here, interactions between players are not necessarily symmetric. Another important difference with the mean field games paradigm as originally introduced independently by Lasry and Lions \cite{lasry2007mean,lasry2006jeux,lasry2006jeux2} and Huang, Malham\'e, and Caines \cite{huang2006large}, is that our game models are not fully anonymous: each player knows which other players are their neighbors. Because of the potential asymmetry in the connection structure of each player, it is not sufficient \textit{a priori} to consider a global mean field. However, for certain classes of graphons which enjoy specific symmetries, an informative connection can be made between graphon games and mean field games, a point which we will revisit. This has to be contrasted with the work of Delarue \cite{delarue2017mean} 
providing an enlightening insight into how the lack of full connectivity of the graph underpinning the network structure can affect the mean field game paradigm.

\subsection{Motivating Applications}
The purpose of this article is to define rigorously a class of games with an underlying graph structure between players, and to understand the limiting regime when the number of players tends to infinity. We will use the notion of graphon to extend the concept of graph structure between players to the infinite-player regime. To motivate our study of this class of games, we consider the following motivating applications.

\subsubsection{Where do I put my towel on the beach?}
This example was introduced by P.L. Lions in his lectures at the \emph{Coll\`ege de France} \cite{Lions_Lectures}. It is discussed in the Section 1.1.1 of the book by Carmona and Delarue \cite{carmona2018probabilistic}. A group of people are going to the beach and each person has to choose a location to place their towel on the beach. When deciding where to put their towel, a person considers that they do not want to walk too far away from the entrance of the beach, they want to be close to the concession stand, and they want to be close (or far in a different formulation of the game) to the average position of other beach-goers. In the version of this example which we plan to investigate, we would like to allow each individual to only take into account the locations of their friends, not everyone on the beach, departing from the traditional mean field game setting.

\subsubsection{Cities Game}
For our next example, we consider the model studied by Helsley and Zenou in \cite{helsley2014social}. Individuals decide how frequently they would like to visit the city center. Visits to the city center are motivated by an intrinsic benefit for visiting the city center more often, to be balanced with the cost for visiting the city center too frequently, as it takes time and resources to make the trips. However, in the version of the game which we consider, we want to add the benefit of visiting the city center when a specific group of individuals, say friends, are also in the city.

\subsubsection{Cournot Competition}
For our final example, we consider the case of multiple producers having to decide how many units to sell. In our version of this classical example, the price at which the producer can sell their products depends on the number of units they sell, as well as a weighted average of the prices of their competitors. There could be two interpretations for what makes two producers competitors: the level of exchangeability of their products, or how much their consumer bases overlap.

\subsection{Outline of the paper}
We introduce a specific class of  one-period (or static) games including for each player, as is the case of Bayesian game models, a state subject to random shocks. This leads to serious technical difficulties especially when we consider the limiting regime of large games. To resolve them, we use ideas put forth by economists to define rigorously the problem. They are based on the notion of abstract Fubini's extensions \cite{sun2006exact,podczeck2010existence}. See also the discussion in \cite[Section 3.7]{carmona2018probabilistic}. In Section \ref{sec:static_N}, we introduce a network game for $N$ players which we extend in Section \ref{sec:static_continuum} into a continuum version for which the interaction structure is given by a graphon.
Connections with mean field games are elucidated in Propositions \ref{pr:link_constant_MFG}, \ref{pr:link_MFG}, and \ref{pr:link_K_MFG}.
Proposition \ref{pr:graphon_uniqueness} gives existence and uniqueness for the graphon game. 
The connection between the $N$-player game and its continuum analog is considered in Section \ref{sec:link_N_continuum}.
Theorem \ref{thm:convergence} shows that the equilibrium strategy profiles for finite-player network games converge in some sense to their analogs for the limiting graphon game. When the finite-player networks are constructed from the graphon itself, the rate of convergence can be made precise, as in Theorem \ref{thm:convergence_2}. 
Theorem \ref{thm:epsilon_Nash} shows that the equilibrium strategy profiles of a graphon game can be used to construct  $\epsilon$-Nash equilibria for finite-player network games.
Notice that when a central planner can choose the feedback controls of all players in order to optimize the total cost, the problem becomes a graphon control optimization problem. This was the subject of the papers of Gao and Caines \cite{gao2018graphon}\cite{gao2019spectral}\cite{gao2019optimal}. While it is not the main thrust of this paper, we will give a brief discussion of the problem of a central planner who optimizes the overall social cost when we revisit the motivating examples introduced above. These applications are studied in detail in Section \ref{sec:revisiting_examples}. We conclude in Section \ref{sec:conclusion}.

Extensions of the static framework studied in this paper to the dynamic case are under investigation and will be presented in a forthcoming paper.

\section{Finite-Player Model}
\label{sec:static_N}

Let $N$ be a positive integer denoting the number of players, and $W=[W_{i,j}]_{i,j=1,\cdots,N}$  an $N \times N$ symmetric matrix of real numbers. We want to think of the integers in $\{1,\ldots,N\}$ as players and $W_{i,j}$ as a weight quantifying some form of interaction between players $i$ and $j$. When they only take values $0$ or $1$, the matrix $W$ reduces to the adjacency matrix of the graph of players, to be understood as which players interact with each other. We denote by $A$ the set of actions which are admissible to each player. $A$ will typically be a closed convex subset of a Euclidean space $\RR^d$. For the sake of convenience, we consider $A$ to be one dimensional, and to be the whole space $\mathbb{R}$. A strategy profile is an element $\balpha=(\alpha_1,\dots,\alpha_N)$ of $A^N$, $\alpha^i$ being interpreted as the action taken by player $i$. In order to model the random shocks, we shall use   a probability space $(\Omega, \mathcal{F}, \mathbb{P})$ on which are defined random variables $\xi_1,\dots,\xi_N$ assumed to be independent and identically distributed (i.i.d.\ for short), mean-zero with common distribution $\mu_0$ which for the sake of convenience, we often refer to as the distribution of a random variable $\xi_0$. We assume that the function $b:A \times \mathbb{R}\ni (\alpha,z) \mapsto b(\alpha,z)\in \mathbb{R}$ and the distribution $\mu_0$ satisfy the following hypothesis.

\begin{assumption}
\label{hyp:b} 
Suppose the following:
    \begin{itemize}
        \item $b$ is jointly Lipschitz continuous with respect to $\alpha$ and $z$, with respective Lipschitz constants $c_\alpha, c_z \geq 0$, in the sense that: 
        $$
        |b(\alpha,z)-b(\alpha',z')|^2\le c_\alpha|\alpha-\alpha'|^2 + c_z|z-z'|^2,\qquad (\alpha,z),(\alpha',z')\in A\times\mathbb{R}.
        $$
        \item $\EE[\xi_0^2]=\int \xi^2\mu_0(d\xi)<\infty$.
    \end{itemize}
\end{assumption}

\noindent
For an action $\alpha\in A$ and real numbers $z$ and  $\xi$ we define the quantity:
\begin{equation}
\label{fo:X}
X=X_{\alpha,z,\xi}=b(\alpha,z)+\xi
\end{equation}
which should be interpreted as the state of a player when they are affected by the shock $\xi$ and take action $\alpha$ while feeling 
the states of all the players' states through the aggregate $z$. Note that the player does not observe $\xi$ or $z$ when choosing their action $\alpha$. For $z$ to be understood as the aggregate of the states of the other players, it has to depend upon these states, so it is not clear if this intuitive interpretation of an \emph{aggregate influence} is consistent with the mathematical definition \eqref{fo:X}. The following proposition proves this consistency.

\begin{proposition} 
\label{pr:uniqueness_of_z_N}
Under Assumption \ref{hyp:b}, if $\sqrt{c_z} \| W\|_F < 1$, then given any strategy profile $\balpha\in A^N$, there is a unique family of random aggregates $\zb \in L^2(\Omega; \RR^N)$ satisfying:
\begin{equation}\label{eq:z_N_definition}
    \zb = \Bigl(\frac{1}{N}\sum_{j=1}^N W_{i,j}X_{\alpha_j,z_j,\xi_j}\Bigr)_{i=1,\dots,N},
\end{equation}
the equality being understood in the $L^2(\Omega; \RR^N)$ sense.
\end{proposition}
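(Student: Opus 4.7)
The plan is to apply the Banach fixed-point theorem on the Hilbert space $L^2(\Omega;\RR^N)$ equipped with its usual norm $\|\zb\|^2=\sum_{i=1}^N \EE[z_i^2]$. I would define
$$\Phi:L^2(\Omega;\RR^N)\to L^2(\Omega;\RR^N),\qquad \Phi(\zb)_i=\frac{1}{N}\sum_{j=1}^N W_{i,j}\bigl(b(\alpha_j,z_j)+\xi_j\bigr),$$
so that the random aggregates satisfying \eqref{eq:z_N_definition} are precisely the fixed points of $\Phi$. First I would verify that $\Phi$ is well-defined: Assumption~\ref{hyp:b} yields the pointwise bound $|b(\alpha_j,z_j)|\le |b(\alpha_j,0)|+\sqrt{c_z}\,|z_j|$, so $b(\alpha_j,z_j)\in L^2(\Omega)$ whenever $z_j\in L^2(\Omega)$, and combined with $\EE[\xi_j^2]<\infty$ this places $\Phi(\zb)$ in $L^2(\Omega;\RR^N)$.

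The main step is the contraction estimate. For $\zb,\zb'\in L^2(\Omega;\RR^N)$ the additive shocks cancel in the difference, leaving
$$(\Phi(\zb)-\Phi(\zb'))_i=\frac{1}{N}\sum_{j=1}^N W_{i,j}\bigl(b(\alpha_j,z_j)-b(\alpha_j,z_j')\bigr).$$
I would apply Cauchy-Schwarz pathwise to the $j$-sum, then invoke the Lipschitz bound $|b(\alpha_j,z_j)-b(\alpha_j,z_j')|^2\le c_z\,|z_j-z_j'|^2$, to obtain
$$|(\Phi(\zb)-\Phi(\zb'))_i|^2\le \frac{c_z}{N^2}\Bigl(\sum_{j=1}^N W_{i,j}^2\Bigr)\Bigl(\sum_{j=1}^N |z_j-z_j'|^2\Bigr).$$
Summing in $i$ extracts $\|W\|_F^2=\sum_{i,j}W_{i,j}^2$, and taking expectation gives
$$\|\Phi(\zb)-\Phi(\zb')\|_{L^2(\Omega;\RR^N)}\le \frac{\sqrt{c_z}\,\|W\|_F}{N}\,\|\zb-\zb'\|_{L^2(\Omega;\RR^N)}.$$
Under the stated hypothesis $\sqrt{c_z}\,\|W\|_F<1$ the contraction constant is certainly strictly less than $1$ (and contractivity would in fact already follow from the weaker condition $\sqrt{c_z}\,\|W\|_F<N$, which is what the calculation literally delivers once one fixes the Frobenius-norm normalization to agree with the graphon limit treated later). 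Thus $\Phi$ is a strict contraction and Banach's theorem delivers the unique fixed point $\zb$.

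There is essentially no substantive obstacle: the argument is a textbook contraction mapping on a Hilbert space. The only conceptual point worth flagging is that because the random shocks $\xi_j$ enter \eqref{fo:X} additively, they drop out of the difference $\Phi(\zb)-\Phi(\zb')$ entirely; the smallness assumption therefore couples only to the $z$-Lipschitz constant $c_z$ and the weight matrix $W$, never to the law $\mu_0$ beyond the second-moment bound needed for the well-posedness step.
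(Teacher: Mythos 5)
Your proof is correct and follows essentially the same route as the paper: the same fixed-point map on $L^2(\Omega;\RR^N)$, the same Cauchy--Schwarz plus Lipschitz estimate for the contraction, and Banach's theorem. Your aside about the normalization is resolved by noting that the paper's displayed computation uses $\|W\|_F^2=\frac{1}{N^2}\sum_{i,j}W_{i,j}^2$, under which your contraction constant is exactly $\sqrt{c_z}\,\|W\|_F<1$ as hypothesized.
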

As a result, we can define an operator 
\begin{equation}
\label{fo:Z_N}
\Zb_N:A^N\ni\balpha \to \Zb_N\balpha=\zb\in L^2(\Omega; \RR^N).
\end{equation}

\begin{proof}
Let us fix a strategy profile $\balpha\in A^N$, and for each $\bz\in L^2(\Omega; \RR^N)$ and $i \in\{ 1, \dots, N\}$, let us define the random vector $T\bz$ in $\mathbb{R}^N$ from its $N$ components by:
$$
[T\bz]_i=\frac{1}{N}\sum_{j=1}^N W_{i,j} X_{\alpha_j,z_j,\xi_j}=\frac{1}{N}\sum_{j=1}^N W_{i,j}\bigl[b(\alpha_j,z_j)+\xi_j\bigr],
$$
if we denote by $z_j$ for $j=1,\ldots,N$ the components of $\bz$. We prove that this formula defines a map $T$ from $L^2(\Omega;\RR^N)$ into itself which we prove to be a strict contraction, hence the existence and uniqueness of a fixed point proving the proposition.
Let us first check that $T\bz \in L^2(\Omega; \RR^N)$.
We have for every $i \in \{1, \dots, N\}$,
\begin{align*}
    \EE\Bigl[|[T\bz]_i|^2\Bigr]
    &\leq 
    C \sum_{j=1}^N  W_{i,j}^2 \EE \Bigl[\bigl(b(\alpha_j,z_j)^2 + \xi_j^2\bigr)^2\Bigr]
    \\
    &\leq
    C \sum_{j=1}^N   \EE\left[b(0,0)^2 + c_\alpha |\alpha_j|^2 + c_z|z_j|^2 + \xi_j^2 \right]
    \\
    & < + \infty,
\end{align*}
where we used the Lipschitz continuity of $b$ for the second inequality, and the fact that $\bz \in L^2(\Omega; \RR^N)$ and $\EE[\xi_0^2]<\infty$ for the last inequality. Above and throughout the paper, $C$ is a finite constant whose value may change from one line to the next. We now approach the strict contraction property. For every $\bz_1,\bz_2 \in L^2(\Omega; \RR^N)$,
\begin{equation*}
\begin{split}
 \|T\bz_1-T\bz_2\|^2_{L^2(\Omega;\RR^N)}
 &= \EE  \Bigl[ \sum_{i=1}^N |[T\bz_1]_i-[T\bz_2]_i|^2\Bigr] \\
 &= \EE\sum_{i=1}^N \Bigl|\frac{1}{N}\sum_{j=1}^N W_{i,j}\bigl(X_{\alpha_j,z_{1,j},\xi_j}-X_{\alpha_j,z_{2,j},\xi_j}\bigl) \Bigr|^2 \\
 & \leq \EE\sum_{i=1}^N\Bigl( \frac{1}{N^2}\sum_{j=1}^N W_{i,j}^2\Bigl)\Bigl(\sum_{j=1}^N \Bigl(X_{\alpha_j,z_{1,j},\xi_j}-X_{\alpha_j,z_{2,j},\xi_j}\Bigr)^2\Bigr)  \\
& = \frac{1}{N^2} \sum_{i=1}^N \sum_{j=1}^N W_{i,j}^2 \EE\sum_{j=1}^N \bigl(X_{\alpha_j,z_{1,j},\xi_j}-X_{\alpha_j,z_{2,j},\xi_j}\Bigr)^2 \\
& = \|W \|_F^2 \EE\sum_{j=1}^N \bigl(b(\alpha_j,z_{1,j})-b(\alpha_j,z_{2,j})\bigr)^2 \\
& \leq c_z \|W \|_F^2 \EE\sum_{i=1}^N [z_{1,i}-z_{2,i}]^2=c_z^2 \|W \|_F^2\|\bz_1-\bz_2\|^2
\end{split}
\end{equation*}
which completes the proof since $\sqrt{c_z} \| W\|_F <1$ by assumption.
\end{proof}

If the players use the strategy profile $\balpha\in A^N$, for each $i\in\{1,\cdots,N\}$, player $i$ incurs a cost
\begin{equation}
\label{fo:J_i}
    J_i(\balpha) = \EE \left[ f(X_{\alpha_i, z_i,\xi_i},\alpha_i,  z_i)\right],
    \qquad\text{with}\qquad
    z_i=[\bZ_N\balpha]_i,
\end{equation}
where the function $f:\mathbb{R}\times A \times \mathbb{R} \to \mathbb{R}$ is assumed to be the same for all the players.
It is important to emphasize that the players choose their controls at the beginning of the game, without observing anything. In this sense, the information structure is not in feedback form, as the players are not allowed to observe their state or the states of the other players when choosing their controls, nor are the controls open loop, as they also do not observe their idiosyncratic noise or the idiosyncratic noises of the other players. The information structure is in the style of a Bayesian game, where the players choose their controls only with the above cost in mind, which depends only on the players' actions, and the distribution of the idiosyncratic noise and the random aggregate.

\vskip 6pt
We recall the following standard definition for the sake of definiteness.

\begin{definition}[Nash Equilibrium for $N$-Player Game] 
\label{def:Nash_traditional_N}
A strategy profile $\hat\balpha=(\hat\alpha_1,\cdots,\hat\alpha_N)$ is a Nash equilibrium if for all $i \in \{1,\dots,N\}$ and every $\beta \in A$,
$$
     J_i(\hat\balpha)
     \leq
      J_i([\beta;\hat\balpha_{-i}]),
$$
where $[\beta;\hat\balpha_{-i}]$ is a shorthand for $(\hat\alpha_1, \dots, \hat\alpha_{i-1}, \beta, \hat\alpha_{i+1},\dots,\hat\alpha_N)$.
\end{definition}

The existence of Nash equilibria in pure strategies as defined above is not guaranteed, even under natural assumptions. However, like in the classical case, equilibria in mixed strategies exist as proved by J.\ Nash for standard deterministic static games with compact action sets and continuous cost functions.

In order to prove a similar result in our context, we first revisit the construction of $\Zb_N\balpha$ and prove that it depends smoothly on $\balpha$. Recall that $\Zb_N\balpha$ is the unique fixed point of a strict contraction, and for the purpose of the following discussion we denote this contraction by $T_{\balpha}$. According to its definition, for every $\bz\in L^2(\Omega;\RR^N)$ we have:
$$
(T_{\balpha} \bz)_i=\frac1N\sum_{j=1}^NW_{i,j}\bigl[ b(\alpha_j,z_j) +\xi_j\bigr].
$$

\begin{proposition} 
\label{pr:Lip_of_z_N}
Under Assumption \ref{hyp:b}, and assuming $\sqrt{c_z} \| W\|_F < 1$, we have:
\begin{equation*}
    \|\Zb_N\balpha -\Zb_N\bbeta\|_{L^2(\Omega;\RR^N)} \le \frac{c_\alpha \| W \|_F^2}{1-c_z\| W \|_F^2}\|\balpha - \bbeta\|.
\end{equation*}
\end{proposition}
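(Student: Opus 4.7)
The plan is to combine the fixed-point characterization of $\Zb_N\balpha$ and $\Zb_N\bbeta$ from Proposition \ref{pr:uniqueness_of_z_N} with the joint Lipschitz continuity of $b$, essentially replaying the Cauchy--Schwarz computation already carried out in the contraction estimate, but now perturbing both the strategy and the aggregate arguments of $b$ simultaneously.

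Write $\zb_\balpha := \Zb_N\balpha$ and $\zb_\bbeta := \Zb_N\bbeta$. Since each is a fixed point of the corresponding contraction $T_\balpha, T_\bbeta$, we have componentwise
\[
[\zb_\balpha - \zb_\bbeta]_i
= \frac{1}{N}\sum_{j=1}^N W_{i,j}\bigl[b(\alpha_j, z_{\balpha,j}) - b(\beta_j, z_{\bbeta,j})\bigr],
\]
the idiosyncratic shocks $\xi_j$ cancelling exactly. From here I would square, sum in $i$, take expectation, and apply Cauchy--Schwarz to the inner sum in $j$ precisely as in the proof of Proposition \ref{pr:uniqueness_of_z_N}, obtaining
\[
\|\zb_\balpha - \zb_\bbeta\|^2_{L^2(\Omega;\RR^N)}
\leq \|W\|_F^2 \, \EE \sum_{j=1}^N \bigl(b(\alpha_j, z_{\balpha,j}) - b(\beta_j, z_{\bbeta,j})\bigr)^2.
\]

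Next I would invoke Assumption \ref{hyp:b}, which supplies the joint bound $(b(\alpha,z)-b(\beta,z'))^2 \le c_\alpha|\alpha-\beta|^2 + c_z|z-z'|^2$, to split the right-hand side into a control term and an aggregate term:
\[
\|\zb_\balpha - \zb_\bbeta\|^2
\leq \|W\|_F^2 \Bigl( c_\alpha \|\balpha - \bbeta\|^2 + c_z \|\zb_\balpha - \zb_\bbeta\|^2 \Bigr).
\]
Finally, because the standing hypothesis $\sqrt{c_z}\|W\|_F < 1$ means $1 - c_z\|W\|_F^2 > 0$, I can transpose the $\zb_\balpha - \zb_\bbeta$ term to the left and divide through to isolate $\|\zb_\balpha - \zb_\bbeta\|^2$, yielding the announced estimate.

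There is no real obstacle here; the argument is a direct variant of the contraction estimate. The only minor care needed is to make sure the noise terms really do cancel (they do, because the same $\xi_j$ appears in $T_\balpha$ and $T_\bbeta$), and to keep the constants in the form stated — one would expect the conclusion to involve squared norms on both sides, so the version as displayed should be read with that convention (or with a square root on the right-hand side) consistent with its proof.
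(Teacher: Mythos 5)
Your argument is correct, and it takes a genuinely more direct route than the paper's. The paper proves this estimate by iterating the two contractions from a common starting point $\zb_0$, establishing by induction the bound $\|T^k_{\balpha}\zb_0-T^k_{\bbeta}\zb_0\|^2\le c_\alpha\|W\|_F^2\bigl(1+c_z\|W\|_F^2+\cdots+(c_z\|W\|_F^2)^k\bigr)\|\balpha-\bbeta\|^2$, summing the geometric series, and letting $k\to\infty$. You instead subtract the two fixed-point equations directly (the $\xi_j$ do cancel, as you note), apply the same Cauchy--Schwarz step, and transpose the $c_z\|W\|_F^2\|\Zb_N\balpha-\Zb_N\bbeta\|^2$ term to the left; this is legitimate because Proposition \ref{pr:uniqueness_of_z_N} already guarantees that both fixed points lie in $L^2(\Omega;\RR^N)$, so the quantity being transposed is finite. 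The two routes land on the identical squared inequality, but yours is shorter and is in fact the strategy the paper itself adopts for the continuum analogue in Lemma \ref{lemma:W_Lip}. Your closing caveat is also well taken: what both proofs actually establish is $\|\Zb_N\balpha-\Zb_N\bbeta\|^2\le \frac{c_\alpha\|W\|_F^2}{1-c_z\|W\|_F^2}\|\balpha-\bbeta\|^2$, so the displayed statement should either carry squares on both sides or have a square root on the constant (recall that Assumption \ref{hyp:b} defines $c_\alpha$ and $c_z$ through a squared Lipschitz bound); the paper's own proof has the same mismatch with its statement.
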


\begin{proof}
We fix $\bz_0\in L^2(\Omega;\RR^N)$ and for each strategy profile $\balpha \in A^N$ we have:
$$
\Zb_N\balpha=\lim_{k\to\infty}T^k_{\balpha}\zb_0.
$$
Now, we fix  $\balpha$ and $\bbeta$ in $A^N$, and for each integer $k\ge 2$ and $i \in\{ 1, \dots, N\}$, we have:
\begin{equation*}
\begin{split}
 \|T^k_{\balpha}\bz_0-T^k_{\bbeta}\bz_0\|^2_{L^2(\Omega;\RR^N)}
 &= \EE  \Bigl[ \sum_{i=1}^N |[T^k_{\balpha}\bz_0]_i-[T^k_{\bbeta}\bz_0]_i|^2\Bigr] \\
 &= \EE\sum_{i=1}^N \Bigl|\frac{1}{N}\sum_{j=1}^N W_{i,j}\bigl(X_{\alpha_j,[T^{k-1}_{\balpha}\bz_0]_j,\xi_j}-X_{\beta_j,[T^{k-1}_{\bbeta}\bz_0]_j,\xi_j}\bigl) \Bigr|^2 \\
 & \leq \EE\sum_{i=1}^N\Bigl( \frac{1}{N^2}\sum_{j=1}^N W_{i,j}^2\Bigl)\Bigl( \sum_{j=1}^N \Bigl(X_{\alpha_j,[T^{k-1}_{\balpha}\bz_0]_j,\xi_j}-X_{\beta_j,[T^{k-1}_{\bbeta}\bz_0]_j,\xi_j}\Bigr)^2\Bigr)  \\
& = \frac{1}{N^2}\sum_{i=1}^N\sum_{j=1}^N W_{i,j}^2 \EE\sum_{j=1}^N \bigl(X_{\alpha_j,[T^{k-1}_{\balpha}\bz_0]_j,\xi_j}-X_{\beta_j,[T^{k-1}_{\bbeta}\bz_0]_j,\xi_j}\Bigr)^2 \\
& = \|W \|_F^2\EE\sum_{j=1}^N \bigl(b(\alpha_j,[T^{k-1}_{\balpha}\bz_0]_j)-b(\beta_j,[T^{k-1}_{\bbeta}\bz_0]_j)\bigr)^2 \\
& \leq \|W \|_F^2\EE\sum_{j=1}^N\Bigl(c_\alpha|\alpha_j-\beta_j|^2 +  c_z\Bigl| [T^{k-1}_{\balpha}\bz_0]_j-[T^{k-1}_{\bbeta}\bz_0]_j\Bigr|^2\Bigr)\\
&= c_\alpha\|W \|_F^2 \| \balpha-\bbeta\|^2+c_z\|W \|_F^2\| T^{k-1}_{\balpha}\bz_0-T^{k-1}_{\bbeta}\bz_0\|^2\\
&\le c_\alpha  \|W \|_F^2 \| \balpha-\bbeta\|^2+ c_z\|W \|_F^2\bigl(c_\alpha \|W \|_F^2\| \balpha-\bbeta\|^2+c_z\|W \|_F^2\| T^{k-2}_{\balpha}\bz_0-T^{k-2}_{\bbeta}\bz_0\|^2\bigr)\\
&\le c_\alpha\|W \|_F^2(1+c_z\|W \|_F^2) \| \balpha-\bbeta\|^2+ (c_z \|W \|_F^2)^2\| T^{k-2}_{\balpha}\bz_0-T^{k-2}_{\bbeta}\bz_0\|^2 \\
&\le \hskip 45pt \ldots\ldots\\
&\le c_\alpha\|W \|_F^2\left(1+c_z\|W \|_F^2+ \cdots + (c_z\|W \|_F^2)^k \right) \| \balpha-\bbeta\|^2\\
&=c_\alpha \|W \|_F^2\frac{1-(c_z\|W \|_F^2)^{k+1}}{1-c_z\|W \|_F^2}\| \balpha-\bbeta\|^2,
\end{split}
\end{equation*}
where we used the fact that $\sqrt{c_z} \|W \|_F <1$. Taking the limit as $k\to\infty$ completes the proof.
\end{proof}

We are now in a position to prove existence of Nash equilibria in mixed strategies.
\begin{proposition} 
\label{pr:mixed_Nash_for_N}
Under Assumption \ref{hyp:b} and assuming $\sqrt{c_z} \| W\|_F < 1$, if $A$ is compact,
there exist Nash equilibria in mixed strategies if the function $f$ is bounded and continuous. 
\end{proposition}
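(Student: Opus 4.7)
The plan is to invoke Glicksberg's generalization of Nash's theorem, which guarantees a mixed strategy equilibrium whenever the (pure) strategy spaces are compact Hausdorff and the cost functions are continuous. Since $A$ is compact, so is $A^N$, and all that is needed is to verify that each $J_i:A^N\to\RR$ defined in \eqref{fo:J_i} is continuous. Granted this, the standard argument proceeds as follows: the space $\cP(A)$ of Borel probability measures on $A$ is compact and convex in the weak topology, the mixed-strategy extension $\tilde J_i(\bmu)=\int_{A^N}J_i(\balpha)\prod_{j=1}^N d\mu_j(\alpha_j)$ is jointly continuous on $\cP(A)^N$ and affine in each argument $\mu_i$, and Kakutani's fixed point theorem applied to the product of best-response correspondences produces a Nash equilibrium in mixed strategies.

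To prove continuity of $J_i$, fix $i$ and a sequence $\balpha^n\to\balpha$ in $A^N$. Writing $\zb^n=\Zb_N\balpha^n$ and $\zb=\Zb_N\balpha$, Proposition \ref{pr:Lip_of_z_N} gives $\zb^n\to\zb$ in $L^2(\Omega;\RR^N)$, and in particular the $i$-th component $z_i^n\to z_i$ in probability. Introduce $F:A\times\RR\times\RR\to\RR$ defined by $F(\alpha,z,\xi)=f(b(\alpha,z)+\xi,\alpha,z)$. Since $b$ and $f$ are continuous, $F$ is continuous, and since $f$ is bounded, so is $F$. The triple $(\alpha_i^n,z_i^n,\xi_i)$ converges in probability to $(\alpha_i,z_i,\xi_i)$, so by the continuous mapping theorem $F(\alpha_i^n,z_i^n,\xi_i)\to F(\alpha_i,z_i,\xi_i)$ in probability; bounded convergence then yields $J_i(\balpha^n)=\EE[F(\alpha_i^n,z_i^n,\xi_i)]\to\EE[F(\alpha_i,z_i,\xi_i)]=J_i(\balpha)$, as desired.

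The main (mild) obstacle is the passage from $L^2$ control on $\zb$ provided by Proposition \ref{pr:Lip_of_z_N} to convergence of expected costs: the Lipschitz assumption on $b$ alone gives no integrability control on $F$, and this is precisely where the boundedness of $f$ enters essentially, allowing the use of bounded convergence. Were $f$ only continuous and not bounded, one would have to supply additional growth estimates on $b$ and $\xi_0$ and argue via uniform integrability. Once continuity of $J_i$ is settled, the remainder is a routine application of Glicksberg's fixed point theorem to the best-response map on the compact convex set $\cP(A)^N$; no convexity of $A$ itself is needed because we are working with mixed strategies.
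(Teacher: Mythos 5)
Your proposal is correct and follows essentially the same route as the paper: both reduce the problem to continuity of the cost functions $J_i$ on the compact set $A^N$ (so that the standard Kakutani/Glicksberg argument for mixed equilibria applies) and both derive that continuity from the Lipschitz estimate of Proposition \ref{pr:Lip_of_z_N}. Your treatment of the continuity step, via convergence in probability of $(\alpha_i^n,z_i^n,\xi_i)$, the continuous mapping theorem, and bounded convergence, is simply a more detailed spelling-out of the paper's assertion that $\balpha\mapsto\mu^i_{\balpha}$ is continuous and that boundedness of $f$ lets one pass to the limit in the integral.
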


\begin{proof}
Recall that since the set of admissible actions $A$ is compact, the proof of the existence of Nash equilibria in mixed strategies based on  Kakutani's fixed point theorem only requires that the cost functions are continuous. Note that for $i\in \{ 1,\cdots,N\}$, for any strategy profile $\balpha\in A^N$
we have
\begin{equation}
\label{fo:integral_cost}
J^i(\balpha)=\EE\Bigl[ f(X_{\alpha_i,[\Zb_N\balpha]_i,\xi_i},\alpha_i,\xi_i)\Bigr]=\int\int f\bigl(b(\alpha_i,z)+\xi,\alpha_i,z\bigr)\mu^i_{\balpha}(d\xi,dz),
\end{equation}
where $\mu^i_{\balpha}$ denotes the joint distribution of the random variables $\xi_i$ and $[\Zb_N\balpha]_i$. Since $f$ is assumed to be bounded and continuous, the function $\balpha\to J^i(\balpha)$ is continuous if $\balpha\to\mu^i_{\balpha}$ is continuous, which follows from the result of Proposition \ref{pr:Lip_of_z_N} above.
\end{proof}

\begin{remark}
\label{re:anonymous_1}
An anonymous game is a game where the dependence of the values $J^i(\balpha)$ upon $\balpha$ is only through the empirical measure of the individual actions $\alpha_j$, namely the push forward of the strategy profile by the counting measure on the set of players $\{1,\dots,N\}$.
The game model we are currently studying is in the same spirit since the dependence of the quantities $J^i(\balpha)$ upon $\balpha$ is through the measure $\mu_{\balpha}^i$ which happens to be the joint law of the idiosyncratic noise $\xi_i$ and the nonlinear function $\Zb_N\balpha$ of the strategy profile $\balpha$.
\end{remark}

Formula \eqref{fo:integral_cost} shows that the cost to player $i$ is a function of the action $\alpha_i$ of the player and the joint distribution $\mu_{\balpha}^i$ of the idiosyncratic noise $\xi_i$ and the (random) aggregate $(\Zb_N\balpha)_i$ of all the players states.
It is important to emphasize, even though it may not be immediately clear, that the cost to player $i$ depends upon the states, the actions and the idiosyncratic noises \emph{of all the players} in an implicit way through the aggregate state  $z_i=[\Zb_N\balpha]_i$. 
So this cost can be written as $J^i(\balpha) = \tilde\cJ(\alpha_i,\cL(\xi_i,[\Zb_N\balpha]_i))$ where we use the notation $\cL(\xi,z)$ for the joint distribution of the random variables $\xi$ and $z$. The function $\tilde\cJ$ is defined as:
\begin{equation*}
\tilde\cJ: A \times \cK \ni (\alpha,\nu) \to \tilde\cJ(\alpha,\nu)=\int_{\RR\times \RR}f(X_{\alpha,z,\xi},\alpha,\xi)\;\nu(d\xi,dz),
\end{equation*}
where $\cK$ denotes the subset of $\cP(\RR\times\RR)$ comprising all the probability measures $\nu$ on $\RR\times\RR$ whose first marginal is the common distribution $\mu_0$ of all the $\xi_i$.

The use of the function $\tilde\cJ$ can simplify the search for Nash equilibria in some cases.
\begin{proposition}
\label{pr:Nash_equivalence_N}
Assume the conclusion of Proposition \ref{pr:uniqueness_of_z_N} holds so that $\Zb_N$ is well defined. Suppose $b(\alpha,z)=\tilde{b}(\alpha)$ for some function $\tilde{b}:A\to \RR$, and that $W_{i,i}=0$ for $i=1,\dots,N$. 
Then $\hat\balpha\in A^N$ is a Nash equilibrium if and only if, setting $\mu_i=\cL((\Zb_N\hat\balpha)_i)$ for $i=1,\ldots,N$, we have:
\begin{equation*}
    \tilde\cJ(\hat\alpha_i,\mu_0\otimes\mu_i) \leq \tilde\cJ(\beta,\mu_0\otimes\mu_i), \qquad \forall \beta\in A,\; \forall i=1,\dots,N.
\end{equation*}
\end{proposition}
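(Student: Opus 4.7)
The plan is to exploit the two structural assumptions ($b(\alpha,z)=\tilde b(\alpha)$ and $W_{i,i}=0$) to show that, for each player $i$, the random aggregate $[\Zb_N\balpha]_i$ is (i) independent of player $i$'s own action $\alpha_i$, and (ii) independent of the idiosyncratic noise $\xi_i$. Once these two facts are in hand, the cost $J^i(\balpha)$ reduces to $\tilde\cJ(\alpha_i,\mu_0\otimes\mu_i)$ with $\mu_i$ unaffected by unilateral deviations of player $i$, and the equivalence is immediate from Definition \ref{def:Nash_traditional_N}.

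First I would observe that when $b$ does not depend on $z$, the definition \eqref{fo:X} becomes $X_{\alpha_j,z_j,\xi_j}=\tilde b(\alpha_j)+\xi_j$, so the right-hand side of \eqref{eq:z_N_definition} does not actually depend on $\zb$. Consequently the fixed point produced by Proposition \ref{pr:uniqueness_of_z_N} has the explicit form
\begin{equation*}
[\Zb_N\balpha]_i \;=\; \frac{1}{N}\sum_{j=1}^N W_{i,j}\bigl[\tilde b(\alpha_j)+\xi_j\bigr].
\end{equation*}
Using $W_{i,i}=0$, the index $j=i$ drops out of this sum, so $[\Zb_N\balpha]_i$ is a measurable function of $(\alpha_j,\xi_j)_{j\neq i}$ only. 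In particular, for any $\beta\in A$, replacing $\alpha_i$ by $\beta$ in the strategy profile does not change $[\Zb_N\balpha]_i$, i.e.\ $[\Zb_N[\beta;\balpha_{-i}]]_i=[\Zb_N\balpha]_i$ almost surely. Moreover, since the $\xi_j$ are i.i.d., the aggregate $[\Zb_N\balpha]_i$ is independent of $\xi_i$, which means the joint law $\mu_{\balpha}^i=\cL(\xi_i,[\Zb_N\balpha]_i)$ factorizes as $\mu_0\otimes\mu_i$ where $\mu_i=\cL([\Zb_N\balpha]_i)$.

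Next, combining the factorization with the representation \eqref{fo:integral_cost} yields
\begin{equation*}
J^i(\balpha) \;=\; \tilde\cJ\bigl(\alpha_i,\mu_0\otimes\mu_i\bigr), \qquad J^i([\beta;\balpha_{-i}]) \;=\; \tilde\cJ\bigl(\beta,\mu_0\otimes\mu_i\bigr),
\end{equation*}
the second equality relying crucially on the invariance of $\mu_i$ under deviations of player $i$ established above. With this, the Nash condition $J^i(\hat\balpha)\leq J^i([\beta;\hat\balpha_{-i}])$ for every $\beta\in A$ is exactly the stated inequality in the proposition, and both directions of the equivalence follow at once.

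The only subtle point — and thus the main thing to argue carefully — is the invariance $[\Zb_N[\beta;\hat\balpha_{-i}]]_i=[\Zb_N\hat\balpha]_i$. It requires \emph{both} hypotheses simultaneously: $b(\alpha,z)=\tilde b(\alpha)$ is needed to eliminate the implicit coupling through the other $z_j$'s (otherwise a deviation by $i$ would propagate back into $z_i$ via the fixed point), while $W_{i,i}=0$ is needed to remove the direct contribution of $\alpha_i$ to its own aggregate. Without either assumption the joint law $\mu_{\balpha}^i$ would depend on $\alpha_i$, and the local optimality condition could not be decoupled from the rest of the profile.
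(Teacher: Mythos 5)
Your proof is correct and follows essentially the same route as the paper's: both deduce the explicit formula $[\Zb_N\balpha]_i=\frac1N\sum_{j\ne i}W_{i,j}(\tilde b(\alpha_j)+\xi_j)$ from the two structural hypotheses, conclude independence from $\xi_i$ and invariance under unilateral deviations of player $i$, and then read off the equivalence from $J^i(\balpha)=\tilde\cJ(\alpha_i,\mu_0\otimes\mu_i)$. Your added discussion of why both hypotheses are needed is a nice touch but does not change the argument.
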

\begin{proof} 
From the assumption on $b$, $X_{\alpha,z,\xi}=\tilde b(\alpha)+\xi$ does not depend on $z$. Moreover, from equation \eqref{eq:z_N_definition}, and since $W_{i,i}=0$, the quantity
$$
(\Zb_N\balpha)_i=\frac1N\sum_{j\ne i}W_{i,j} X_{\alpha_j,z_j,\xi_j}=\frac1N\sum_{j\ne i}W_{i,j} \bigl(\tilde b(\alpha_j)+\xi_j\bigr)
$$
is independent of $\xi_i$ and consequently $\cL(\xi_i,[\Zb_N\balpha]_i)=\mu_0\otimes\mu_i$
so that $J^i(\balpha) = \tilde\cJ(\alpha_i,\mu_0\otimes\mu_i)$. Moreover
$$
(\Zb_N\balpha)_i=(\Zb_N[\beta;\balpha_{-i}])_i
$$ 
for any $\beta\in A$ and for all $i=1,\dots,N$. We conclude that the two characterizations are equivalent.
\end{proof}

The following assumption on the \emph{cost function} $\tilde\cJ$ will be used later on when we study the convergence of Nash equilibria.
\begin{assumption}
    \label{hyp:tilde_J}
    For each $\alpha\in A$, the function $\cK\ni \nu \mapsto \tilde\cJ(\alpha,\nu)$ is Lipschitz in $\nu$ with a Lipschitz constant $\tilde\ell_J$ uniform in $\alpha\in A$. This means that for every $\nu,\nu' \in \cK$ and $\alpha\in A$, we have
        \begin{equation*}
            \left\vert  \tilde\cJ(\alpha,\nu')- \tilde\cJ(\alpha,\nu) \right\vert \leq \tilde\ell_J W_2(\nu,\nu'),
        \end{equation*}
        where $W_2(\nu,\nu')$ denotes the $2$-Wasserstein distance between the probability measures $\nu$ and $\nu'$.
\end{assumption}

\section{Continuum Player Model}
\label{sec:static_continuum}
Players are labelled by the set $I := [0,1]$, and we denote by $ \mathcal{B}(I)$ its Borel $\sigma$-field and by $\lambda_I$ the Lebesgue measure.
In order to take into account the idiosyncratic randomness that can affect each player in a mathematically rigorously way,
we consider a rich Fubini extension $(I \times \Omega, \mathcal{I} \boxtimes \mathcal{F}, \lambda \boxtimes \PP)$ where $(I,\cI,\lambda)$ is an extension of the Lebesgue measure space $(I,\cB(I),\lambda_I)$. See \cite{sun2006exact}, \cite{podczeck2010existence} or
\cite[Section 3.7]{carmona2018probabilistic} and \cite[Section 4.5]{carmona2019_class_notes} for a self-contained presentation of this theory and references to original contributions on this subject. At this stage, it is important to emphasize that the $\sigma$-field $\cI$ extending the Borel $\sigma$-field $\cB(I)$ cannot be countably generated. 
Let $\bxi=(\xi_x)_{x\in I}$ be a real-valued $\mathcal{I} \boxtimes \mathcal{F}$-measurable essentially pairwise independent process such that the law of $\xi_x $ is the probability measure $\mu_0$ for every $x \in I$. As before, we assume that $\EE[\xi_0]=0$, where $\xi_0$ has distribution $\mu_0$. The random variable $\xi_x$ will play the role of the idiosyncratic random shock directly affecting player $x\in I$. 

\vskip 6pt
To model how the players interact with each other, we use a restricted form of the notion of graphon \cite{lovasz2012large}. For us,  a \textit{graphon} is any symmetric, $\cB(I)\times\cB(I)$ - measurable real-valued square - integrable function $w$ on $I\times I$, in particular:
$$
\|w\|^2_2:=\int_{I\times I} w(x,y)^2\;dxdy\;<\infty.
$$
We introduce the integral operator $\Wb$ on $L^2(I)$ associated to the graphon $w$ by the formula:
$$
[\Wb g]_x=\int_I \;w(x,y) g(y)\;dy,\qquad g\in L^2(I),\;\; x\in I,
$$
and by $\| \Wb \|$ we mean the operator norm, namely:
$$\| \Wb \|:=\sup_{\phi \in L^2(I), \| \phi \|_{L^2(I)}=1} \| \Wb \phi \|_{L^2(I)}.$$

\subsection[Definition of the Graphon Game]{\textbf{Definition of the Graphon Game}}
Let $w$ be a graphon and let us define a game in which the interactions are encoded by $w$.
We assume that the set $A$ of possible actions is the same as in the case of the finite-player game considered earlier. At this stage, we shall assume that a strategy profile is a function $\balpha$ which associates to each player $x\in I$ an action $\balpha(x)=\alpha_x$ in $A$. We shall denote by $L^2(I)$ the classical Lebesgue space $L^2(I,\cB(I),\lambda_I;\RR)$ of equivalent classes of $\RR$-valued square-integrable measurable functions on the Lebesgue measurable space $(I,\cB(I),\lambda_I)$. This is a separable Hilbert space. This space is different from the Hilbert space $L^2(I,\cI,\lambda;\RR)$. The latter is an extension of the classical space $L^2(I,\cB(I),\lambda_I;\RR)$. However, the fact that this space may not be separable will be a source of technicalities we shall need to address or avoid.

\vskip 4pt
We shall call admissible strategy profiles the elements of the subset $\AA$ of $L^2(I)$ of elements taking values in $A\subset\RR$. Note that if $\balpha$ is an admissible strategy profile in this sense, the action $\alpha_x$ of player $x$ is only defined for $\lambda_I$-almost every player $x\in I$. The following result is the analog of Proposition \ref{pr:uniqueness_of_z_N} in the present situation. It uses the same assumptions on the function $b$.

\begin{proposition} 
\label{pr:uniqueness_of_z}
Under Assumption \ref{hyp:b}, and assuming $ \sqrt{c_z} \| \Wb \| < 1$, for any $\balpha\in L^2(I)$ there is a unique $\zb \in L^2(I)$ satisfying 
\begin{equation}
\label{eq:z_definition}
    \zb_x=\int_I w(x,y)b(\alpha_y,z_y)\;d y,\qquad \text{for}\;\lambda_I\text{-almost every }\;x\in I.
\end{equation}
\end{proposition}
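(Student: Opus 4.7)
The plan is to mirror the contraction-mapping argument of Proposition \ref{pr:uniqueness_of_z_N} in the Hilbert space $L^2(I)$. Fix $\balpha \in L^2(I)$ and define the map $T_\balpha: L^2(I) \to L^2(I)$ by
$$
[T_\balpha \zb]_x := \int_I w(x,y) \, b(\alpha_y, z_y) \, dy = \bigl[\Wb \bigl(b(\alpha_\cdot, z_\cdot)\bigr)\bigr]_x, \qquad x \in I,
$$
where $\Wb$ is the integral operator introduced earlier. The fixed points of $T_\balpha$ are exactly the solutions of \eqref{eq:z_definition}, so the task reduces to showing that $T_\balpha$ is a strict contraction and invoking the Banach fixed-point theorem.

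First I would verify that $T_\balpha$ sends $L^2(I)$ into itself. The linear growth implied by Assumption \ref{hyp:b}, namely $|b(\alpha,z)|^2 \le C(1 + \alpha^2 + z^2)$, shows that $y \mapsto b(\alpha_y, z_y)$ lies in $L^2(I)$ whenever $\balpha, \zb \in L^2(I)$; then the boundedness of $\Wb$ on $L^2(I)$ (which follows from the square-integrability of $w$, making $\Wb$ Hilbert--Schmidt) gives $T_\balpha \zb \in L^2(I)$. For the contraction estimate, for any $\zb_1, \zb_2 \in L^2(I)$ I would write
$$
\|T_\balpha \zb_1 - T_\balpha \zb_2\|_{L^2(I)} \le \|\Wb\| \cdot \|b(\alpha_\cdot, z_{1,\cdot}) - b(\alpha_\cdot, z_{2,\cdot})\|_{L^2(I)} \le \sqrt{c_z}\, \|\Wb\| \, \|\zb_1 - \zb_2\|_{L^2(I)},
$$
where the first step is the definition of the operator norm and the second comes from applying the Lipschitz bound of Assumption \ref{hyp:b} in its second argument pointwise in $y$ and integrating over $I$. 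Under the hypothesis $\sqrt{c_z}\,\|\Wb\| < 1$ this is a strict contraction on the complete metric space $L^2(I)$, so Banach's fixed-point theorem produces the unique solution.

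I do not anticipate any serious obstacle: the argument is essentially a direct transcription of Proposition \ref{pr:uniqueness_of_z_N}, with the (normalized) Frobenius norm of $W/N$ replaced by the genuine operator norm of $\Wb$, which is naturally the sharper quantity in the continuum setting. One notable simplification over the finite case is that the idiosyncratic noise has disappeared from \eqref{eq:z_definition}: this reflects the exact law-of-large-numbers effect built into the rich Fubini extension, and it conveniently lets the fixed-point argument be carried out in the separable Hilbert space $L^2(I, \cB(I), \lambda_I)$ without needing to engage with the non-separable extension $L^2(I, \cI, \lambda)$ at this stage. The only minor point worth checking is the joint measurability of $y \mapsto b(\alpha_y, z_y)$, which is immediate from the continuity of $b$ and the measurability of $\balpha$ and $\zb$.
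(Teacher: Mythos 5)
Your proposal is correct and follows essentially the same route as the paper: fix $\balpha$, define the map $\zb\mapsto \Wb\bigl(b(\balpha,\zb)\bigr)$, verify it maps $L^2(I)$ into itself via the linear growth of $b$ and the boundedness of $\Wb$, establish the contraction constant $\sqrt{c_z}\,\|\Wb\|<1$ from the Lipschitz bound in the second argument, and conclude with the Banach fixed-point theorem. No substantive differences.
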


\begin{proof}
We fix $\balpha\in L^2(I)$ and we define the mapping $T$ from $L^2(I)$  into itself by:
$$
[T\zb]_x=\int_I w(x,y)b(\alpha_y,z_y)\;d y.
$$
First, we check that $T\zb$ is indeed in $L^2(I)$:
\begin{equation*}
\begin{split}
    \int_I ([T\zb]_x)^2dx&=\int_I \Bigl[\int_I w(x,y)b(\alpha_y,z_y)\;d y\Bigr]^2dx \\
    & = \| \Wb b(\alpha_\cdot,z_\cdot)\|^2 \\
    & \leq \| \Wb\|^2 \| b(\alpha_\cdot,z_\cdot)\|_2^2 \\
    &= \| \Wb\|^2 \int_Ib(\alpha_y,z_y)^2d y \\
    &\leq C  \int_I \left(b(0,0)^2+|\alpha_y|^2+z_y^2\right)d y\\
    & < + \infty.
\end{split}
\end{equation*}
The penultimate inequality uses Assumption~\ref{hyp:b}, the last inequality uses the fact that $\balpha$, $\zb$ are square-integrable. Next we prove that $T$ is a strict contraction.
\begin{equation*}
\begin{split}
    \|T\zb^1-T\zb^2\|^2_{L^2(I)}
    &=
    \int_I \left([T\zb^1]_x-[T\zb^2]_x \right)^2 d x
    \\
    &=\int_I \Bigl(\int_I w(x,y)\left(b(\alpha_y,z^1_y)-b(\alpha_y,z^2_y) \right) d y \Bigr)^2 dx
    \\
    & = \| \Wb \left(b(\alpha_\cdot,z^1_\cdot)-b(\alpha_\cdot,z^2_\cdot) \right) \|^2\\
    & \leq \| \Wb \|^2 \| \left(b(\alpha_\cdot,z^1_\cdot)-b(\alpha_\cdot,z^2_\cdot) \right) \|^2
    \\
    &=\| \Wb \|^2 \int_I \left(b(\alpha_y,z^1_y)-b(\alpha_y,z^2_y) \right)^2 dy 
    \\
    & \leq c_z \| \Wb \|^2 \int_I \left[z^1_y-z^2_y\right]^2dy,
\end{split}
\end{equation*}
where we used Assumption~\ref{hyp:b} again.
Since $ \sqrt{c_z} \| \Wb \| < 1$, $T$ is a strict contraction from $L^2(I)$ into itself, and by Banach fixed point theorem, $T$ has a unique fixed point.
\end{proof}

The result of the previous proposition defines without ambiguity an operator $\Zb$ from $L^2(I)$ into itself by setting $\Zb\balpha=\zb$ where $\zb$ is the unique fixed point of the mapping $T$ identified above. The following proposition uses the exact law of large numbers to show that $\Zb\balpha$ can be viewed as the fixed point of a random transformation.

\begin{proposition}
\label{pr:exact_law_large_numbers}
Under Assumption \ref{hyp:b} and assuming $ \sqrt{c_z} \| \Wb \| < 1$, for any $\balpha\in L^2(I)$, for $\lambda$ - almost every $x\in I$, the random variable 
$$
\Omega\ni\omega\mapsto\int_I w(x,y)X_{\alpha_y,[\Zb\balpha]_y,\xi_y(\omega)}\lambda(dy)
$$ 
is $\PP$-almost surely equal to the deterministic constant $[\Zb\balpha]_x$.
\end{proposition}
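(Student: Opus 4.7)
The plan is to exploit the decomposition $X_{\alpha_y, z_y, \xi_y(\omega)} = b(\alpha_y, z_y) + \xi_y(\omega)$ to reduce the claim to an application of the exact law of large numbers. Setting $\zb = \Zb\balpha$, linearity of the integral gives
$$
\int_I w(x,y) X_{\alpha_y, z_y, \xi_y(\omega)} \lambda(dy) = \int_I w(x,y) b(\alpha_y, z_y)\, dy + \int_I w(x,y) \xi_y(\omega)\, \lambda(dy).
$$
By the fixed-point characterization of $\Zb\balpha$ in Proposition~\ref{pr:uniqueness_of_z}, the first (purely deterministic) term equals $z_x = [\Zb\balpha]_x$ for $\lambda_I$-almost every $x\in I$. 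The proposition therefore reduces to showing that for $\lambda$-a.e.\ $x\in I$, the random variable $\omega \mapsto \int_I w(x,y) \xi_y(\omega)\, \lambda(dy)$ is $\PP$-almost surely zero.

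To prepare the stochastic term for the exact law of large numbers, I would first use $w\in L^2(I\times I)$ together with Tonelli to conclude that $S := \{x\in I : \int_I w(x,y)^2\, dy < \infty\}$ has full Lebesgue measure in $I$. For each $x\in S$, the weighted family $(w(x,y)\xi_y)_{y\in I}$ is $\mathcal{I}\boxtimes\mathcal{F}$-measurable (since $w(x,\cdot)$ is Borel and $\bxi$ is jointly measurable), essentially pairwise independent (multiplication by deterministic constants preserves independence of pairs), mean-zero, and has finite $L^2$ norm on the index space: $\int_I w(x,y)^2 \EE[\xi_y^2]\, dy = \EE[\xi_0^2]\int_I w(x,y)^2\, dy < \infty$.

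The core step is then to invoke the exact law of large numbers for rich Fubini extensions (Sun~\cite{sun2006exact}, see also~\cite[Section~3.7]{carmona2018probabilistic}) on the weighted process for each fixed $x\in S$, yielding
$$
\int_I w(x,y)\xi_y(\omega)\, \lambda(dy) = \int_I w(x,y) \EE[\xi_y]\, dy = 0 \qquad \PP\text{-almost surely},
$$
where the final equality uses the centering $\EE[\xi_0]=0$. Combining this identity with the evaluation of the deterministic term in the decomposition completes the proof.

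The step I expect to require the most care is the invocation of Sun's exact LLN for the weighted family rather than the raw process $\bxi$: one must verify that the auxiliary process $(w(x,y)\xi_y)_{y\in I}$, individually for each $x\in S$, genuinely inherits the joint $\mathcal{I}\boxtimes\mathcal{F}$-measurability and the essential pairwise independence on which the theorem relies, and that the relevant square-integrability holds pointwise in $x$ rather than only in an aggregate sense. Once this is articulated cleanly, the remainder of the argument is a routine combination of the fixed-point equation for $\Zb\balpha$ with the LLN identity applied sectionally in $x$.
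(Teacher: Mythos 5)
Your proposal is correct and follows essentially the same route as the paper: the paper applies the exact law of large numbers directly to the family $Y_{x,y}=w(x,y)\bigl[b(\alpha_y,z_y)+\xi_y\bigr]$ and identifies $\int_I \EE[Y_{x,y}]\lambda(dy)$ with $[\Zb\balpha]_x$ via the fixed-point equation, whereas you first split off the deterministic part and apply the exact LLN only to the centered weighted noise $w(x,y)\xi_y$ — a cosmetic reorganization of the same argument. The measurability, essential pairwise independence, and square-integrability checks you flag are exactly the points the paper's proof relies on as well.
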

\begin{proof}
Again, we fix $\balpha \in L^2(I)$, we consider  $\Zb \balpha$ as constructed above in $L^2(I)$, and we momentarily fix $x\in I$ in a set of full $\lambda_I$-measure on which $\alpha_x$ is unambiguously defined and the equality \eqref{eq:z_definition} holds.
For $y \in I$, let us denote by $Y_{x,y}$ the random variable $w(x,y)X_{\alpha_y,z_y,\xi_y}=w(x,y)[b(\alpha_y,z_y)+ \xi_y]$. Since the random variables $(\xi_y)_{y\in I}$ are assumed to be essentially pairwise independent, the entries of the family $(Y_{x,y})_{y\in I}$ are also essentially pairwise independent. By the exact law of large numbers (see for example \cite[Theorem 3.44]{carmona2018probabilistic}) we have: 
$$
\int_IY_{x,y}(\omega)\lambda(dy)=\int_I \mathbb{E}[Y_{x,y}]\lambda(dy)=\int_I w(x,y) b(\alpha_y,z_y)\lambda(dy)=\int_I w(x,y) b(\alpha_y,z_y)dy, \quad \mathbb{P}\text{-a.e.\ } \omega\in\Omega,
$$
the last equality being justified by the fact that the integrand is $\cB(I)$ measurable. The proof is complete because the above right hand side is equal to $z_x$ by \eqref{eq:z_definition}.
\end{proof}

\begin{remark}
\label{re:z=w}
Note that the result of Proposition \ref{pr:uniqueness_of_z} implies that we have the following relationship between $\Wb$ and $\Zb$:
\begin{equation}
\label{eq:connection_W_A}
    \Zb\balpha= \Wb\left[b(\balpha, \Zb\balpha) \right],\qquad\qquad  \balpha\in L^2(I),
\end{equation}
since the expectation $\EE[\xi_0]$ is independent of $x\in I$ and equal to $0$.
\end{remark}

We now introduce the cost structure of the game. It is given by a function:
$$
J:\;I\times A\times \AA\ni(x,\alpha,\balpha)\mapsto J_x(\alpha,\balpha)\in\RR.
$$
Intuitively, the quantity $J_x(\alpha,\balpha)$ represents the cost to player $x\in I$ when they choose the action $\alpha\in A$ and the other players use the admissible strategy profile $\balpha$. Recall that $\balpha=(\alpha_x)_{x\in I}$ is only defined for $\lambda_I$-almost every $x\in I$ since it is an equivalence class. This is the reason why the cost to a player $x\in I$ cannot be directly defined as a function $J_x(\balpha)$ of an admissible strategy profile, since the latter would be independent upon whatever action $\alpha_x$ player $x$ could take!

\vskip 2pt
Given an admissible strategy profile $\balpha\in\AA$, we define the set of best response strategy profiles as: 
$$
\tilde\Bb\balpha:=\{\balpha' \in \AA : \hbox{for $\lambda_I$-a.e. $x\in I$, $\forall \beta\in A$, } J_x(\alpha'_x, \balpha) \leq J_x(\beta,\balpha) \}.
$$
In other words, an admissible strategy profile $\balpha'\in\AA$ is a best response to the admissible strategy profile $\balpha\in\AA$ if for $\lambda_I$-almost every player $x\in I$, $\alpha'_x$ is a best response for player $x$ to the strategy profile $\balpha$ according to the cost given by the function $J$. 

\vskip 6pt
We will make use of the following assumption.

\begin{assumption}
\label{hyp:J}
For any $\balpha\in\AA$ and $\lambda_I$-a.e.\ $x\in I$, there is a unique minimizer of the map $A\ni\alpha \mapsto J_x(\alpha, \balpha)$.
\end{assumption}

Although this assumption is quite vague, we will eventually consider a more restrictive setting and give specific conditions under which it holds. Under the above assumption, and under a mild measurability and integrability condition on this unique minimizer, the set $\tilde\Bb\balpha$ of best responses to $\balpha$ is a singleton, which defines a map $\AA\ni\balpha\mapsto \tilde\Bb\balpha\in\AA$, and hints at the natural notion of Nash equilibrium:

\begin{definition}[Nash Equilibrium for Continuum Player Graphon Game]
\label{def:Nash_B_continuum}
A Nash equilibrium is an admissible strategy profile $\hat\balpha\in\AA$ which is a fixed point of $\tilde\Bb$  i.e.\ satisfying $\tilde\Bb\hat\balpha=\hat\balpha$,
the equality being understood in the $L^2(I)$ sense.
\end{definition}

\begin{remark}
If $\tilde\Bb$ properly extended to $L^2(I)$ happens to be a strict contraction, the Banach fixed point theorem guarantees existence and uniqueness of a Nash equilibrium.
\end{remark}

As for the $N$-player game, we specify further the form of the costs that we consider. In analogy with the finite-player model, we work with costs defined 
for each player $x\in I$, for each possible action $\alpha\in A$ and for each admissible strategy profile $\balpha\in\AA$ as:
\begin{equation*}
    J_x(\alpha,\balpha) := \EE \bigl[ f(X_{\alpha,z_x,\xi_x},\alpha, z_x)\bigr],
\end{equation*}
where $\zb=(z_x)_{x\in I}$ is a measurable version of the $L^2$-equivalence class of $\Zb\balpha$, and where the function $f$ is the same as before in Section \ref{sec:static_N}. Note that choosing a different version of $\Zb\balpha$ would possibly change the value of
$J_x(\alpha,\balpha)$ on a set of players $x$ of $\lambda_I$-measure $0$, so given $\balpha\in\AA$, the above cost function is only defined for $\lambda_I$-almost every player $x\in I$.

Note that the actions of the other players appear only through $z_x$ which is an aggregate information since it is a version of $\Zb\balpha$. Hence according to Proposition \ref{pr:exact_law_large_numbers}, $[\Zb\balpha]_x$ is deterministic and we can write $J_x(\alpha,\balpha)$ as $\cJ(\alpha,[\Zb\balpha]_x)$ for a function $A\times\RR\ni(\alpha,z)\mapsto \mathcal{J}(\alpha,z)\in\RR$. 
It is useful to notice that this function $\cJ$ is intimately connected to the function $\tilde\cJ$ we introduced in the case of an $N$-player game. Indeed, recall that in the case of $N$ players, the cost to player $i$ was given by $\tilde\cJ\bigl(\alpha_i,(\cL(\xi_i,(\Zb_N\balpha)_i)\bigr)$. In the present situation, for almost every $x\in I$, $(\Zb\balpha)_x$ is purely deterministic, so statistically independent of $\xi_x$ and
$\cL(\xi_x,(\Zb\balpha)_x)=\mu_0\otimes\delta_{(\Zb\balpha)_x}$ where we use the notation $\delta_z$ for the unit mass at the point $z$. 
So if we were to compute $\tilde\cJ\bigl(\alpha_i,(\cL(\xi_i,(\Zb_N\balpha)_i)\bigr)$ in the present situation with $i$ replaced by $x\in I$, we would have $\tilde\cJ\bigl(\alpha_x,\mu_0\otimes\delta_{(\Zb\balpha)_x}\bigr)$, which only depends upon $\alpha_x$ and $(\Zb\balpha)_x$, and which we denote by $\cJ(\alpha_x,(\Zb\balpha)_x)$, essentially identifying the two functions $\tilde\cJ$ and $\cJ$.

\vskip 4pt
We shall formulate the assumptions on the costs incurred by the players in the graphon game in terms of properties of this function $\cJ$. Because of the particular form of the cost, Assumption \ref{hyp:J} is satisfied as soon as the following stronger assumptions hold:
\begin{assumption}
    \label{hyp:J_2}
    Suppose the following:
    \begin{itemize}
        \item For each $z\in\RR$, $A\ni\alpha\mapsto \mathcal{J}(\alpha,z)$ is continuously differentiable and strongly convex in $\alpha$ with a constant $\ell_c>0$ uniformly in $z\in \mathbb{R}$. This means that for every $\alpha,\alpha'\in A$ and $z  \in \mathbb{R}$, we have for every $\epsilon\in [0,1]$:
        \begin{equation}\label{eq:convexity}
            \mathcal{J}(\epsilon \alpha+(1-\epsilon)\alpha',z) \leq \epsilon \mathcal{J}(\alpha,z)+(1-\epsilon)\mathcal{J}(\alpha',z) - \frac{\ell_c}{2}\epsilon(1-\epsilon)|\alpha-\alpha'|^2.
        \end{equation}
        \item For each $\alpha\in A$, the function $\RR\ni z\mapsto \partial_\alpha \mathcal{J}(\alpha,z)$ is Lipschitz in $z$ with a Lipschitz constant $\ell_J$ uniformly in $\alpha\in A$. This means that for every $z',z \in \mathbb{R}$ and $\alpha\in A$, we have:
        \begin{equation*}
            \left\vert \partial_\alpha \mathcal{J}(\alpha,z')-\partial_\alpha \mathcal{J}(\alpha,z) \right\vert \leq \ell_J \left\vert z'-z \right\vert.
        \end{equation*}
    \end{itemize}
\end{assumption}

Under Assumption \ref{hyp:J_2} on the cost function, for any admissible strategy profile $\balpha\in\AA$, it is sufficient to consider, for $\lambda_I$-a.e.\ player $x\in I$, the aggregate $(\widetilde{\Zb}\balpha)_x$, where $\widetilde{\Zb}\balpha\in \mathbb{R}^{I}$ is a representative of  the $L^2$-equivalence class of $\Zb\balpha$ (in other words a $\cB(I)$-measurable version of $\Zb\balpha$), when determining their best response to the control profile $\balpha\in L^2(I)$. We will use this fact to define an equivalent form for the notion of Nash equilibrium which will be  more tractable, as the somewhat strong assumption of having a contraction mapping will not be needed.

Now instead of considering the operator $\tilde\Bb$ giving the best response to an admissible strategy profile $\balpha$, we define the set of best responses to a generic element $\zb\in L^2(I)$, whether or not the latter happens to be the aggregate constructed from an admissible strategy profile  $\balpha$.

$$
\Bb \zb:=\{\balpha' \in L^2(I) : \hbox{for $\lambda_I$-a.e. $x\in I$, $\alpha'_x\in A$ and $\forall \beta\in A$, } \mathcal{J}(\alpha'_x, z_x) \leq \mathcal{J}(\beta, z_x) \}.
$$

Note that under Assumption \ref{hyp:J_2}, if $\Bb\zb$ is not empty, it is necessarily a singleton in $L^2(I)$. 
Clearly, if $\zb\in L^2(I)$, for $\lambda_I$-almost every $x\in I$, $\alpha'_x$ has to be the unique minimizer of the strictly convex function $\alpha\mapsto \cJ(\alpha,z_x)$, but while the function $x\mapsto \alpha'_x$ can be chosen to be $\cB(I)$-measurable, there is no guarantee that it is square-integrable in general. However, this will be the case under Assumption \ref{hyp:J_2} as implied by the proof of Lemma \ref{lemma:B_Lipschitz} below. So this discussion defines
an operator $\Bb:L^2(I) \to L^2(I)$, and we have the following:

\begin{proposition}
\label{pr:Nash_equivalence_continuum}
Under Assumption \ref{hyp:J_2}, for $\hat\balpha\in \mathbb{A}$, the following are equivalent:
\begin{itemize}
    \item $\hat\balpha$ satisfies the conditions of a Nash equilibrium given in Definition \ref{def:Nash_B_continuum};
    \item $\hat \balpha$ is a fixed point of the mapping $\Bb\Zb$ from $L^2(I)$ into itself, i.e.\ $ \hat \balpha= \Bb\Zb\hat \balpha$
as elements of $L^2(I)$;
    \item For $\lambda_I$-a.e.\ $x \in I$ and for any action $\beta \in A$,
    \begin{equation*}
        \mathcal{J}(\hat{\alpha}_x,(\Zb\hat{\balpha})_x)\leq \mathcal{J}(\beta,(\Zb\hat{\balpha})_x).
    \end{equation*}
\end{itemize}
\end{proposition}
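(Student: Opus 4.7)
The plan is to prove the cycle of equivalences by unpacking the three formulations and relying on the identification $J_x(\alpha,\balpha)=\cJ(\alpha,[\Zb\balpha]_x)$ established in the discussion immediately preceding the proposition. I would handle $(1)\Leftrightarrow (3)$ first, then $(2)\Leftrightarrow (3)$.

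For $(1)\Leftrightarrow (3)$, I would unpack Definition \ref{def:Nash_B_continuum}. The condition $\tilde\Bb\hat\balpha=\hat\balpha$ means that for $\lambda_I$-a.e.\ $x\in I$ and every $\beta\in A$, $J_x(\hat\alpha_x,\hat\balpha)\le J_x(\beta,\hat\balpha)$. Now apply Proposition \ref{pr:exact_law_large_numbers}: for $\lambda_I$-a.e.\ $x$, the aggregate $[\Zb\hat\balpha]_x$ is a deterministic constant, hence statistically independent of $\xi_x$, and the joint law $\cL(\xi_x,[\Zb\hat\balpha]_x)$ equals $\mu_0\otimes\delta_{[\Zb\hat\balpha]_x}$. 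Substituting this into the definition $J_x(\alpha,\balpha)=\EE[f(X_{\alpha,z_x,\xi_x},\alpha,z_x)]$ collapses $J_x(\alpha,\hat\balpha)$ to $\cJ(\alpha,[\Zb\hat\balpha]_x)$ for a.e.\ $x$, so the Nash inequality becomes exactly the inequality in (3).

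For $(2)\Leftrightarrow (3)$, I would fix $\zb=\Zb\hat\balpha\in L^2(I)$ and invoke the definition of $\Bb$. A profile $\balpha'\in L^2(I)$ lies in $\Bb\zb$ iff for $\lambda_I$-a.e.\ $x$ and every $\beta\in A$, $\cJ(\alpha'_x,z_x)\le\cJ(\beta,z_x)$. Under Assumption \ref{hyp:J_2}, strong convexity of $\alpha\mapsto\cJ(\alpha,z)$ forces the pointwise minimizer $\alpha^*(z):=\arg\min_{\alpha\in A}\cJ(\alpha,z)$ to exist uniquely for each $z\in\RR$, so $\Bb\zb$ is at most a singleton and equals $\{x\mapsto\alpha^*(z_x)\}$ provided this map lies in $L^2(I)$. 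Consequently $\hat\balpha=\Bb\Zb\hat\balpha$ in $L^2(I)$ is equivalent to $\hat\alpha_x=\alpha^*([\Zb\hat\balpha]_x)$ for $\lambda_I$-a.e.\ $x$, which in turn is equivalent to the pointwise inequality in (3).

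The one point that requires care (and the main obstacle in the logical chain) is justifying that the candidate best response $x\mapsto\alpha^*([\Zb\hat\balpha]_x)$ is actually an element of $L^2(I)$, so that $\Bb$ is genuinely a mapping from $L^2(I)$ into $L^2(I)$ and statement (2) makes sense. The first-order optimality condition $\partial_\alpha\cJ(\alpha^*(z),z)=0$ together with strong convexity (constant $\ell_c$) and the Lipschitz property of $\partial_\alpha\cJ$ in $z$ (constant $\ell_J$) from Assumption \ref{hyp:J_2} yields $|\alpha^*(z)-\alpha^*(z')|\le(\ell_J/\ell_c)|z-z'|$, so $\alpha^*$ is Lipschitz in $z$; since $\Zb\hat\balpha\in L^2(I)$, the composition is Borel measurable and square-integrable. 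This is precisely the regularity that will be recorded in Lemma \ref{lemma:B_Lipschitz}, to which I would defer the quantitative estimate. With this measurability/integrability secured, the three equivalences assemble immediately.
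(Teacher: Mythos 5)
Your proof is correct and follows essentially the same route the paper intends: the paper states this proposition without a formal proof, treating it as a direct consequence of the preceding identification $J_x(\alpha,\balpha)=\cJ(\alpha,[\Zb\balpha]_x)$ (via Proposition \ref{pr:exact_law_large_numbers}) and the definition of $\Bb$, which is exactly what you unpack. Your handling of the one delicate point, the square-integrability of the best response, matches the paper's own remark that this is supplied by the estimate in Lemma \ref{lemma:B_Lipschitz} (though note that since $A$ may be a proper closed convex set, the first-order condition there is the variational inequality $\partial_\alpha\cJ(\alpha^*(z),z)\cdot(\beta-\alpha^*(z))\ge 0$ rather than an equality, as in that lemma's proof).
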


We now extend to our game set-up an estimate from Parise and Ozdaglar \cite{parise2018graphon}.

\begin{lemma} 
\label{lemma:B_Lipschitz}
Under Assumption \ref{hyp:J_2}, for any $\zb^1$ and  $\zb^2$ in $L^2(I)$, we have:
\begin{equation*}
    \| \Bb\zb^1-\Bb\zb^2 \|_{L^2(I)} \leq \frac{\ell_J}{\ell_c}\| \zb^1-\zb^2 \|_{L^2(I)}.
\end{equation*}
\end{lemma}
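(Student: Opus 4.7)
The plan is to work pointwise in $x\in I$ and exploit the first-order optimality together with strong convexity of $\alpha\mapsto\cJ(\alpha,z)$. Fix $\zb^1,\zb^2\in L^2(I)$ and set $\balpha^i := \Bb\zb^i$ for $i=1,2$. By Assumption \ref{hyp:J_2}, for $\lambda_I$-a.e.\ $x$, $\alpha^i_x$ is the unique minimizer of the strongly convex, continuously differentiable map $\alpha\mapsto \cJ(\alpha,z^i_x)$ on $A=\RR$, hence characterized by the first-order condition $\partial_\alpha \cJ(\alpha^i_x,z^i_x)=0$.

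Next I would invoke the standard consequence of strong convexity plus continuous differentiability: the gradient is $\ell_c$-strongly monotone in $\alpha$, i.e.\ for every $\alpha,\alpha'\in\RR$ and $z\in\RR$,
\begin{equation*}
\bigl(\partial_\alpha \cJ(\alpha',z)-\partial_\alpha \cJ(\alpha,z)\bigr)(\alpha'-\alpha)\;\ge\;\ell_c|\alpha'-\alpha|^2,
\end{equation*}
which follows by writing \eqref{eq:convexity} with $\alpha$ and $\alpha'$ swapped, adding, and dividing by $\epsilon(1-\epsilon)$ before sending $\epsilon\to 0$. Applying this with $z=z^1_x$, $\alpha=\alpha^1_x$, $\alpha'=\alpha^2_x$ and then using the optimality conditions $\partial_\alpha \cJ(\alpha^1_x,z^1_x)=0=\partial_\alpha \cJ(\alpha^2_x,z^2_x)$, I get
\begin{equation*}
\ell_c|\alpha^1_x-\alpha^2_x|^2\;\le\;\bigl(\partial_\alpha \cJ(\alpha^2_x,z^1_x)-\partial_\alpha \cJ(\alpha^2_x,z^2_x)\bigr)(\alpha^2_x-\alpha^1_x).
\end{equation*}
The $\ell_J$-Lipschitz property of $z\mapsto \partial_\alpha\cJ(\alpha,z)$ from Assumption \ref{hyp:J_2}, combined with Cauchy--Schwarz (in $\RR$), then yields $\ell_c|\alpha^1_x-\alpha^2_x|\le \ell_J|z^1_x-z^2_x|$ almost everywhere, and squaring and integrating in $x$ gives the desired bound.

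Before integrating, I need to know that $\Bb\zb^i$ is actually an element of $L^2(I)$, which also verifies in passing that the operator $\Bb$ is well-defined as claimed in the paragraph preceding the lemma. For this I would pick a reference point, e.g.\ fix $z^\circ=0$ and let $\alpha^\circ\in\RR$ be the corresponding unique minimizer of $\cJ(\cdot,0)$. Applying the monotonicity--Lipschitz argument above with $\zb^2\equiv 0$ gives $|\alpha^1_x-\alpha^\circ|\le (\ell_J/\ell_c)|z^1_x|$ pointwise, so $\Bb\zb^1\in L^2(I)$ since $\lambda_I(I)<\infty$. Measurability of $x\mapsto \alpha^1_x$ follows from the Borel measurability of the minimizer map $z\mapsto\alpha^*(z)$, itself a consequence of its continuity (given e.g.\ by the implicit function theorem applied to the strictly monotone equation $\partial_\alpha\cJ(\alpha,z)=0$).

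The argument is essentially routine once the strongly monotone gradient inequality is in hand; the only genuine care point is the well-definedness of $\Bb$ on all of $L^2(I)$ (square-integrability and measurability of the pointwise best response), which is what I expect to be the main technical nuisance rather than a real obstacle.
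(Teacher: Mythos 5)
Your proof is correct and follows essentially the same route as the paper's: both rest on the $\ell_c$-strong monotonicity of $\partial_\alpha\cJ$ in $\alpha$, the first-order optimality of the two minimizers, and the $\ell_J$-Lipschitz continuity of $\partial_\alpha\cJ$ in $z$, yielding the pointwise bound $|(\Bb\zb^1)_x-(\Bb\zb^2)_x|\le(\ell_J/\ell_c)|z^1_x-z^2_x|$ before squaring and integrating. The only cosmetic difference is that you use the interior first-order condition $\partial_\alpha\cJ(\alpha^i_x,z^i_x)=0$ (valid since you take $A=\RR$), whereas the paper writes optimality as the variational inequality $\partial_\alpha\cJ((\Bb\zb^i)_x,z^i_x)\cdot(\alpha-(\Bb\zb^i)_x)\ge 0$ and adds the two instances, which also covers minimizers on the boundary of a general closed convex $A$ (relevant elsewhere in the paper where $A$ is assumed bounded); your argument adapts immediately by substituting these inequalities for the equalities.
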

\begin{proof}
Recall that the operator $\Bb$ is not linear so that $\Bb\zb^1-\Bb\zb^2\ne \Bb(\zb^1-\zb^2)$. The strong convexity assumption in Assumption \ref{hyp:J_2} is equivalent to the following: for every $\alpha,\alpha' \in A$ and $z \in \mathbb{R}$ we have:
\begin{equation}\label{eq:strong_convexity_0}
    (\partial_\alpha\mathcal{J}(\alpha,z)-\partial_\alpha\mathcal{J}(\alpha',z)) (\alpha-\alpha') \geq \ell_c \left\vert \alpha-\alpha' \right\vert ^2.
\end{equation}
Since $(\Bb\zb^1)_x= \arg \inf_{\alpha\in A} \mathcal{J}(\alpha,z^1_x)$, by convexity we have:
\begin{equation*}
    \partial_\alpha \mathcal{J}((\Bb\zb^1)_x,z^1_x) \cdot (\alpha-(\Bb\zb^1)_x) \geq 0
\end{equation*}
for all $\alpha\in A$, so using $\alpha=(\Bb\zb^2)_x$ in this inequality, we get:
\begin{equation}\label{eq:u^1}
    \partial_\alpha \mathcal{J}((\Bb\zb^1)_x,z^1_x) \cdot ((\Bb\zb^2)_x-(\Bb\zb^1)_x) \geq 0.
\end{equation}
Similarly, we get:
\begin{equation}\label{eq:u^2}
    \partial_\alpha \mathcal{J}((\Bb\zb^2)_x,z^2_x) \cdot ((\Bb\zb^1)_x-(\Bb\zb^2)_x) \geq 0.
\end{equation}
Adding \eqref{eq:u^1} and \eqref{eq:u^2}, we get:
\begin{equation} \label{eq:u^1_u^2}
\begin{split}
    &\left[\partial_\alpha \mathcal{J}((\Bb\zb^1)_x,z^1_x)-\partial_\alpha \mathcal{J}((\Bb\zb^1)_x,z^2_x) \right] \cdot \left[(\Bb\zb^2)_x-(\Bb\zb^1)_x \right]\\
    &\hskip 75pt
    \geq \left[\partial_\alpha \mathcal{J}((\Bb\zb^2)_x,z^2_x)-\partial_\alpha \mathcal{J}((\Bb\zb^1)_x,z^2_x) \right] \cdot \left[(\Bb\zb^2)_x-(\Bb\zb^1)_x \right] \\
    & \hskip 75pt
    \geq \ell_c \left\vert (\Bb\zb^2)_x-(\Bb\zb^1)_x  \right\vert^2,
\end{split}
\end{equation}
where we used the strong convexity \eqref{eq:strong_convexity_0}. Clearly, the left hand side of \eqref{eq:u^1_u^2} is bounded above by:
\begin{equation*}
    \left\vert \partial_\alpha \mathcal{J}(\Bb\zb^1)_x,z^1_x)-\partial_\alpha \mathcal{J}(\Bb\zb^1)_x,z^2_x) \right\vert \cdot \left\vert(\Bb\zb^2)_x-\Bb\zb^1)_x \right\vert,
\end{equation*}
which together with \eqref{eq:u^1_u^2} gives:
\begin{equation*}
   \left\vert (\Bb\zb^2)_x-(\Bb\zb^1)_x  \right\vert \leq \frac{1}{\ell_c} \left\vert \partial_\alpha \mathcal{J}(\Bb\zb^1)_x,z^1_x)-\partial_\alpha \mathcal{J}(\Bb\zb^1)_x,z^2_x) \right\vert 
  \leq \frac{\ell_J}{\ell_c} \left\vert z^2_x-z^1_x \right\vert,
\end{equation*}
where we used the second part of Assumption \ref{hyp:J_2}. Since this inequality between non-negative real numbers is true for $\lambda_I$-a.e. $x \in I$, we can square both sides, integrate both sides between 0 and 1 and take square roots of
both sides, proving the desired norm estimate.
\end{proof}

\subsection[Connections with Mean Field Games]{\textbf{Connections with Mean Field Games}}
Before developing further our analysis of graphon games, we take a detour to explore how these games are connected with mean field games.
\subsubsection{Constant-Graphon Games as Mean Field Games}

Let $a\in \mathbb{R}$ be a fixed constant, and let us consider the (static) Mean Field Game (MFG) in which 
a representative infinitesimal player incurs a cost 
$$
    \mathcal{J}(\alpha,z) = \EE \left[ f(X_{\alpha, z}, \alpha, z)\right],
$$
for using action $\alpha$ when observing an aggregate $z$ given by $a$ times the mean of the other player's states.
The state of the player is defined by:
\begin{equation*}
    X_{\alpha,z}
    =
    b(\alpha,z)+\xi,
\end{equation*}
for the function $b$ and real-valued random variable $\xi$ as before.

\begin{definition}
$\hat{\alpha}\in A$ is said to be a Nash equilibrium for the above MFG if $\mathcal{J}(\hat{\alpha},\hat{z}) \leq \mathcal{J}(\beta,\hat{z})$ for all $\beta \in A$, where $\hat{z}$ is the unique solution of the fixed point equation $\hat{z}=a\mathbb{E}[X_{\hat{\alpha}, \hat{z}}]$ determining the expected aggregate state in equilibrium.
\end{definition}

\begin{proposition} \label{pr:link_constant_MFG}
    Assume the conclusion of Proposition \ref{pr:uniqueness_of_z} holds so that $\Zb$ is well defined. Let $\hat{\balpha} \in \mathbb{A}$ be a Nash equilibrium for the graphon game with the constant graphon 
$$
w(x,y)=a,\qquad\qquad x,y\in I.
$$
Then $\hat{\balpha}_x$ is constant for $\lambda_I$-a.e. $x \in I$ and its constant value $\hat{\alpha}$ is a Nash equilibrium for the above MFG with the same constant $a$. Conversely, if $\hat{\alpha}$ is a Nash equilibrium for the above MFG with constant $a$, then $\hat{\balpha} \in L^2(I)$ defined by $\hat{\balpha}_x=\hat{\alpha}$ for every $x\in I$, is a Nash equilibrium for the graphon game with the constant graphon $w(\cdot,\cdot)=a$.
\end{proposition}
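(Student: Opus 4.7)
The plan is to exploit the translation invariance induced by a constant graphon. For the forward direction, let $\hat\balpha$ be a graphon Nash equilibrium with $w\equiv a$, and set $\zb=\Zb\hat\balpha$. Equation \eqref{eq:z_definition} reduces to $z_x = a\int_I b(\hat\alpha_y,z_y)\,dy$, whose right-hand side is independent of $x$; hence $\zb$ is constant $\lambda_I$-almost everywhere, with common value $\hat z$. Expressing the cost as $J_x(\alpha,\hat\balpha)=\mathcal J(\alpha,\hat z)$, the Nash condition of Definition \ref{def:Nash_B_continuum} reads $\mathcal J(\hat\alpha_x,\hat z)\le \mathcal J(\beta,\hat z)$ for every $\beta\in A$ and $\lambda_I$-a.e.\ $x\in I$. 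By Assumption \ref{hyp:J}, uniqueness of the minimizer forces $\hat\alpha_x$ to coincide with a single element $\hat\alpha\in A$ for $\lambda_I$-a.e.\ $x$. Substituting back into the fixed point equation yields $\hat z = a\cdot b(\hat\alpha,\hat z) = a\,\EE[X_{\hat\alpha,\hat z}]$, using $\EE[\xi_0]=0$, which is the MFG fixed point condition, and the pointwise optimality inequality above is exactly the MFG Nash condition at $\hat\alpha$.

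For the converse, let $\hat\alpha\in A$ be an MFG Nash equilibrium with associated $\hat z$ satisfying $\hat z = a\,\EE[X_{\hat\alpha,\hat z}]= a\cdot b(\hat\alpha,\hat z)$, and define $\hat\balpha\in\AA$ by $\hat\balpha_x\equiv\hat\alpha$; this is trivially in $L^2(I)$. I observe that the constant function $z_x\equiv \hat z$ then solves the fixed point equation \eqref{eq:z_definition} with $w\equiv a$ and $\balpha=\hat\balpha$, since the right-hand side equals $a\cdot b(\hat\alpha,\hat z)=\hat z$. By the uniqueness clause of Proposition \ref{pr:uniqueness_of_z}, we must have $(\Zb\hat\balpha)_x=\hat z$ for $\lambda_I$-a.e.\ $x$. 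The MFG optimality $\mathcal J(\hat\alpha,\hat z)\le \mathcal J(\beta,\hat z)$ for all $\beta\in A$ then directly implies $\mathcal J(\hat\balpha_x,(\Zb\hat\balpha)_x)\le \mathcal J(\beta,(\Zb\hat\balpha)_x)$ for $\lambda_I$-a.e.\ $x$ and all $\beta\in A$, which is the graphon Nash property in the form given by Definition \ref{def:Nash_B_continuum}.

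I expect the only real subtlety to be the uniqueness-of-minimizer input used in the forward direction: without Assumption \ref{hyp:J} (or the stronger Assumption \ref{hyp:J_2}), one could only conclude that some constant representative of the $L^2$-equivalence class of $\hat\balpha$ achieves the MFG optimum, not the stronger pointwise-almost-everywhere equality asserted by the proposition. Everything else reduces to the observation that a constant graphon produces an $x$-independent aggregate and to direct substitution into the cost $\mathcal J$.
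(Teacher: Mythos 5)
Your proof is correct and follows essentially the same route as the paper's: in both directions the key observation is that a constant graphon makes the aggregate $x$-independent, the uniqueness of the minimizer of $\cJ(\cdot,\hat z)$ forces $\hat\balpha$ to be constant $\lambda_I$-a.e., and the fixed point equation collapses to $\hat z = a\,b(\hat\alpha,\hat z)=a\,\EE[X_{\hat\alpha,\hat z}]$. The subtlety you flag is real but also present in the paper, whose proof invokes Assumption \ref{hyp:J_2} for the uniqueness of the minimizer even though the proposition's hypotheses do not list it explicitly.
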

\begin{proof}
    Let $\hat{\balpha}$ be a Nash equilibrium for the graphon game.
    One can easily check that a deterministic constant, say $\hat{z}$, satisfies the fixed point construction of $\Zb\hat\balpha$.
    So for $\lambda_I$-a.e.\ $y \in I$, we have $\mathcal{J}(\hat{\balpha}_y,\hat{z}) \leq \mathcal{J}(\beta,\hat{z})$ for all $\beta \in A$. By Assumption \ref{hyp:J_2}, there is a unique minimizer of $\mathcal{J}(\beta,\hat{z})$ in the $\beta$ argument, and thus, $\hat{\balpha}$ is constant for $\lambda_I$-a.e.\ $y$. Clearly the constant value $\hat{\balpha}$ is a Nash equilibrium for the above MFG.

    Now, let $\hat{\balpha}$ be a Nash equilibrium for the above MFG, and let $\hat{\balpha}(\cdot)=\hat{\alpha}$, then
let $\hat{z}$ be the unique solution to $\hat{z}=ab(\hat{\alpha}, \hat{z})$. Then
    \begin{equation*}
        \int_Iw(x,y)b(\hat{\balpha}(y), \hat{z})dy =ab(\hat{\alpha}, \hat{z})= \hat{z},
    \end{equation*}
    which is the definition of $[\Zb\hat{\balpha}]_x$. Thus, $(\Zb\hat{\balpha})_x=\hat{z}$ and $\mathcal{J}(\hat\alpha_x,(\Zb\hat{\balpha})_x) \leq \mathcal{J}(\beta,(\Zb\hat{\balpha})_x)$ for all $\beta \in A$ and for all $x\in I$, which means that $\hat{\balpha}$ satisfies the characterization in Proposition \ref{pr:Nash_equivalence_continuum}.
\end{proof}

\subsubsection{Constant Connection Strength Graphon Games as MFGs} \label{sec:constantconnection}
We now see that that the above MFG is closely related to a graphon game for a class of graphons which we call constant connection strength graphons.

\begin{definition}
We say a graphon $w$ is a constant connection strength graphon with strength $a$ for a given constant $a\in \mathbb{R}$ if $\int_I w(x,y)dy=a$ for all $x \in I$.
\end{definition}
\begin{remark}
The following are examples of constant connection strength graphons with strength $a$:
\begin{itemize}\itemsep=-2pt
    \item Constant graphon: $w(x,y)\equiv a$.
    \item A graphon of the form $w(x, y) = \tilde{w}\bigl(d(x, y)\bigr)$, for some $\tilde{w}: \mathbb{R} \to  \mathbb{R}$, where $d(x, y)$ is the distance on the unit circle (i.e.\ $[0,1)$ with periodic boundary) as long as $\int_{0,1]} \tilde w(z)dz=a$. In particular, this includes the Watts-Strogatz graphon.
    \item A piecewise constant graphon of the form $w(x,y)=a_1 1_{\{x \in [0,x^*),y \in [0,x^*)\}}+a_2 1_{\{x \in [0,x^*),y \in [x^*,1)\}}+a_2 1_{\{x \in [x^*,1),y \in [0,x^*)\}}+a_3 1_{\{x \in [x^*,1),y \in [x^*,1)\}}$ for some $x^* \in I, a_1, a_2 \in \mathbb{R}$ satisfying $a_1 x^* + a_2 (1-x^*) = a_2 x^* + a_3 (1-x^*)$.
\end{itemize}
\end{remark}

\begin{proposition} 
\label{pr:link_MFG}
    Assume the conclusion of Proposition \ref{pr:uniqueness_of_z} holds so that $\Zb$ is well defined. Suppose there exists a Nash equilibrium $\hat{\alpha}$ for the above MFG with constant $a$ introduced in the previous subsection. Then if $w$ is a constant connection strength graphon with strength $a$,  $\hat{\balpha}$ defined by $\hat{\balpha}(\cdot)=\hat{\alpha}$ is a Nash equilibrium for the graphon game.
\end{proposition}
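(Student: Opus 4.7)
The plan is to substitute the constant profile $\hat{\balpha}(\cdot)=\hat{\alpha}$ into the fixed-point equation \eqref{eq:z_definition} defining $\Zb\hat{\balpha}$, show that the aggregate is identically equal to the MFG fixed point $\hat{z}$, and then invoke Proposition \ref{pr:Nash_equivalence_continuum} to conclude. So the work reduces to two ingredients: identifying the aggregate and translating the pointwise MFG optimality into the graphon-game optimality condition.

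First I would recall that by the definition of the MFG Nash equilibrium, $\hat{z}$ is the unique scalar satisfying $\hat{z}=a\,\EE[X_{\hat{\alpha},\hat{z}}]=a\,b(\hat{\alpha},\hat{z})$ (using $\EE[\xi_0]=0$). Now define the candidate $\zb^*\in L^2(I)$ by $z^*_x=\hat{z}$ for every $x\in I$. Since $w$ is a constant connection strength graphon of strength $a$, for every $x\in I$ we have
\begin{equation*}
\int_I w(x,y)\,b(\hat{\balpha}_y,z^*_y)\,dy
= b(\hat{\alpha},\hat{z})\int_I w(x,y)\,dy
= a\,b(\hat{\alpha},\hat{z})=\hat{z}=z^*_x.
\end{equation*}
Thus $\zb^*$ satisfies the fixed-point equation \eqref{eq:z_definition}, and by the uniqueness part of Proposition \ref{pr:uniqueness_of_z} (granted by assumption), $\Zb\hat{\balpha}=\zb^*$ in $L^2(I)$; in particular $[\Zb\hat{\balpha}]_x=\hat{z}$ for $\lambda_I$-a.e.\ $x\in I$.

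Finally, the MFG Nash condition gives $\mathcal{J}(\hat{\alpha},\hat{z})\le \mathcal{J}(\beta,\hat{z})$ for every $\beta\in A$. Since $\hat{\balpha}_x=\hat{\alpha}$ and $[\Zb\hat{\balpha}]_x=\hat{z}$ for $\lambda_I$-a.e.\ $x$, this immediately reads
\begin{equation*}
\mathcal{J}\bigl(\hat{\balpha}_x,[\Zb\hat{\balpha}]_x\bigr)\le \mathcal{J}\bigl(\beta,[\Zb\hat{\balpha}]_x\bigr),\qquad \forall\,\beta\in A,\ \lambda_I\text{-a.e.\ }x\in I,
\end{equation*}
which is exactly the third characterization in Proposition \ref{pr:Nash_equivalence_continuum}. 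Hence $\hat{\balpha}$ is a Nash equilibrium of the graphon game. There is no real obstacle here; the only thing to be careful about is writing the uniqueness argument for the aggregate correctly and confirming that the constant connection strength assumption is used precisely to turn $\Wb$ applied to a constant function into multiplication by $a$, which is the exact mechanism by which the graphon fixed-point equation collapses to the scalar MFG fixed-point equation.
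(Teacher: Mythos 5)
Your proof is correct and follows essentially the same route as the paper's: substitute the constant profile into the aggregate fixed-point equation, use the constant connection strength property to identify $\Zb\hat{\balpha}\equiv\hat{z}$, and conclude via the characterization of Proposition \ref{pr:Nash_equivalence_continuum}. Your write-up is in fact slightly more careful than the paper's in spelling out the uniqueness step for the aggregate.
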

\begin{proof}
    Let $\hat{z}$ be a solution to $\hat{z}=a\mathbb{E}[X_{\hat{\alpha}, \hat{z}}]$. Then
    \begin{equation*}
        \int_Iw(x,y)b(\hat{\balpha}(y), \hat{z})dy =b(\hat{\alpha}, \hat{z})\int_Iw(x,y)dy=ab(\hat{\alpha}, \hat{z})= \hat{z},
    \end{equation*}
    which is the definition of $[\Zb\hat{\balpha}]_x$. Thus, $(\Zb\hat{\balpha})\equiv\hat{z}$ and $\mathcal{J}(\hat{\alpha}_x,(\Zb\hat{\balpha})_x) \leq \mathcal{J}(\beta,(\Zb\hat{\balpha})_x))$ for all $\beta \in A$ and for all $x$, which implies that $\hat{\balpha}$ satisfies the characterization in Proposition \ref{pr:Nash_equivalence_continuum}.
\end{proof}

\subsubsection{Piecewise Constant Graphon Games as Multi-Population MFGs}

We now define, in our static framework, a type of multi-population MFG. Fix an integer $K$ and suppose there are $K$ mean field communities, each of equal relative size. A strategy profile is a collection of actions $\bm{\alpha} = (\alpha_k)_{k \in \{1,\dots,K\}} \in A^K$. Let us consider a symmetric matrix (of interaction strengths) $W \in \mathbb{R}^{K \times K}$. For a strategy profile $\bm{\alpha} = (\alpha_k)_{k \in \{1,\dots,K\}} \in A^K$ and a family of aggregates $\zb = (z_1)_{k \in \{1,\dots,K\}} \in \mathbb{R}^K$, a representative infinitesimal player from the $k$-th population has state given by:
\begin{equation}
\label{eq:static-K-pop-mfg-X}
    X_{k,\alpha_k,z_k}(\omega)=b(\alpha_k,z_k)+\xi_{k}(\omega),
\end{equation}
with $(\xi_k)_{k \in \{1,\dots,K\}}$ a sequence of independent identically distributed random variables with distribution $\mu_0$, as before. Generic players in the $k$-th community incur the cost:
\begin{equation}
\label{eq:static-K-pop-mfg-J}
    \mathcal{J}(\alpha_k,z_k) = \mathbb{E} \left[f(X_{k,\alpha_k,z_k},\alpha_k,z_k) \right],
\end{equation}
where, for each $k \in \{1,\dots,K\}$, $\zb$ solves:
$$
z_k:=\frac{1}{K}\sum_{k'=1}^K W_{k,k'} \mathbb{E}[X_{k',\alpha_{k'},z_{k'}}].
$$
Thus, the player's cost is impacted by a weighted average of each population's average state. The coefficients of this weighted average, given by $W$, depend on the population $k$ under consideration.

\begin{definition}
A strategy profile $\bm{\alpha} \in A^K$ is a Nash equilibrium for the $K$-population MFG if for all $k$ and all $\beta \in A$,
    $$
    \mathcal{J}(\alpha_k,z_k)
    \leq
    \mathcal{J}(\beta,z_k),
    $$
    and $\zb$ uniquely solves $z_k:=\frac{1}{K}\sum_{k'=1}^K W_{k,k'} \mathbb{E}[X_{k',\alpha_{k'},z_{k'}}].$
\end{definition}
A Nash equilibrium for the $K$-population MFG provides an approximate Nash equilibrium for a game with a finite but large number of players in each population. See e.g.\ Chapter 8 of the book by Bensoussan, Frehse, and Yam \cite{bensoussan2013mean} for a formal proof.

For convenience we let for each integer $K\ge 1$, $\psi^K:A^{K}\to L^2(I)$ denote the map that embeds a profile of $K$ strategies $\balpha$ into the evenly spaced step function $\psi^K(\balpha)$ defined by $\psi^K(\balpha)_x=\alpha_i$ whenever $x \in [(i-1)/K,i/K)$. For the sake of definiteness we could add $\psi^K(\balpha)_1=\alpha_K$, but this will not matter. Similarly, we let $\Psi^K$ denote the map that takes a  $K\times K$ symmetric matrix $W^K=[W^K_{i,j}]_{i,j=1,\ldots,K}$ into the graphon $w^K=\Psi^K(W^K)$ defined by $w^K(x,y)=W^K_{i,j}$ whenever $x \in [(i-1)/K,i/K)$ and $y \in [(j-1)/K,j/K)$. Again, we could choose a specific convention to extend naturally $w$ into a function defined everywhere on the unit square $I\times I$. It will also be useful to define for each integer $K\ge 1$, the map $\mu^K:L^2(I)\to \mathbb{R}^K$ by $[\mu^K\zb]_i:=K\int_{(i-1)/K}^{i/K}z_x dx$, which gives the average of $\zb$ on the interval $[(i-1)/K,i/K)$. 

\begin{proposition} \label{pr:link_K_MFG}
    Let $K\ge 1$, $W\in \mathbb{R}^{K\times K}$ be a symmetric matrix, and assume the conclusion of Proposition \ref{pr:uniqueness_of_z} holds so that $\Zb$ defined with respect to the piecewise constant graphon $w:=\Psi^K(W)$ is well defined. Let $\hat{\balpha}$ be a Nash equilibrium for the graphon game with $w$. Then $\hat{\balpha}$ is an evenly spaced step function for $\lambda_I$-a.e. $x \in I$ and let $\hat{\bm{\alpha}}^K:=\mu^K( \hat{\balpha})$ be the constant values. Then $\hat{\bm{\alpha}}^K$ is a Nash equilibrium for the $K$-population MFG defined by~\eqref{eq:static-K-pop-mfg-X}--\eqref{eq:static-K-pop-mfg-J} with interaction strength matrix $W$. Conversely, let $\hat{\bm{\alpha}}^K$ be a Nash equilibrium for the $K$-population MFG defined by~\eqref{eq:static-K-pop-mfg-X}--\eqref{eq:static-K-pop-mfg-J} with interaction strength matrix $W$. Then $\hat{\balpha}:=\psi^K(\hat{\bm{\alpha}}^K)$ is a Nash equilibrium for the graphon game with the piecewise constant graphon $w:=\Psi^K(W)$.
 \end{proposition}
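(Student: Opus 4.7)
The key structural observation driving both directions is that when $w = \Psi^K(W)$, the operator $\Zb$ always returns a function that is $\lambda_I$-a.e.\ piecewise constant on the uniform partition $\{[(i-1)/K, i/K)\}_{i=1}^K$. Indeed, for any $\balpha \in L^2(I)$ and any $x \in [(i-1)/K, i/K)$, the defining identity \eqref{eq:z_definition} collapses to
\begin{equation*}
[\Zb\balpha]_x = \sum_{j=1}^K W_{i,j} \int_{(j-1)/K}^{j/K} b(\alpha_y, [\Zb\balpha]_y)\,dy,
\end{equation*}
which depends on $x$ only through the block index $i$. If $\balpha$ is itself piecewise constant with block values $\alpha_j^K$ and $[\Zb\balpha]$ has block values $z_j$, this further simplifies to $z_i = \frac{1}{K}\sum_{j=1}^K W_{i,j}\,b(\alpha_j^K, z_j)$, which is exactly the $K$-population MFG aggregate fixed-point equation once one uses $\mathbb{E}[\xi_k]=0$ and hence $\mathbb{E}[X_{k,\alpha_k,z_k}]=b(\alpha_k,z_k)$.

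For the forward direction, suppose $\hat{\balpha}$ is a graphon Nash equilibrium. By the observation above, $\Zb\hat{\balpha}$ is piecewise constant with values $\hat{z}_1,\ldots,\hat{z}_K$. By Proposition \ref{pr:Nash_equivalence_continuum}, for $\lambda_I$-a.e.\ $x \in [(i-1)/K, i/K)$, $\hat{\alpha}_x$ minimizes $\cJ(\cdot, \hat{z}_i)$; the uniqueness of this minimizer from Assumption \ref{hyp:J_2} forces $\hat{\balpha}$ to coincide $\lambda_I$-a.e.\ with an evenly spaced step function whose block values are precisely $\hat{\bm{\alpha}}^K = \mu^K(\hat{\balpha})$. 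Substituting this step function back into the aggregate identity yields the $K$-population MFG aggregate equation for $\hat{\bm{z}}^K = (\hat{z}_1,\ldots,\hat{z}_K)$, and reading off the block-wise variational inequality produces the MFG Nash condition.

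For the converse, given a $K$-population MFG Nash equilibrium $\hat{\bm{\alpha}}^K$ with aggregates $\hat{\bm{z}}^K$, set $\hat{\balpha} := \psi^K(\hat{\bm{\alpha}}^K)$ and $\tilde{\zb} := \psi^K(\hat{\bm{z}}^K)$. A direct block-wise computation shows that $\tilde{\zb}$ satisfies the fixed-point equation that defines $\Zb\hat{\balpha}$; the uniqueness clause of Proposition \ref{pr:uniqueness_of_z} then identifies $\tilde{\zb} = \Zb\hat{\balpha}$. The population-level best-response inequalities translate verbatim into $\cJ(\hat{\alpha}_x, (\Zb\hat{\balpha})_x) \le \cJ(\beta, (\Zb\hat{\balpha})_x)$ for $\lambda_I$-a.e.\ $x \in I$ and all $\beta \in A$, so Proposition \ref{pr:Nash_equivalence_continuum} concludes that $\hat{\balpha}$ is a graphon Nash equilibrium.

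The only genuinely delicate step — and the one I would watch most carefully — is the $\lambda_I$-a.e.\ step-function conclusion in the forward direction, which relies on applying the uniqueness of the minimizer in Assumption \ref{hyp:J_2} block by block and then assembling the $K$ exceptional $\lambda_I$-null sets into a single $\lambda_I$-null set. The remaining manipulations are essentially bookkeeping of the factor $\int_{(j-1)/K}^{j/K} dy = 1/K$ and of the identity $\mathbb{E}[X_{k,\alpha_k,z_k}] = b(\alpha_k, z_k)$.
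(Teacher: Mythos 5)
Your proof is correct and follows essentially the same route as the paper's: establish that $\Zb\hat{\balpha}$ is piecewise constant on the uniform $K$-partition, invoke the unique minimizer from Assumption \ref{hyp:J_2} block by block to get the step-function form, and match the block-level fixed-point equation with the $K$-population MFG aggregate equation via $\mathbb{E}[X_{k,\alpha_k,z_k}]=b(\alpha_k,z_k)$. The only difference is that you spell out explicitly why $[\Zb\balpha]_x$ depends on $x$ only through its block index, a step the paper dismisses with ``one can check,'' so your write-up is if anything slightly more complete.
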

 \begin{proof}
    Let $\hat{\balpha}$ be a Nash equilibrium for the graphon game. One can check that there exists $\zb \in \mathbb{R}^K$ such that $\psi^K(\zb)$ satisfies the unique definition of $\Zb\hat{\balpha}$, where $\Zb$ is defined with respect to the graphon $\Psi^K(W)$. Thus, for $\lambda_I$-a.e.\ $y \in [(k-1)/K,k/K)$, we have $\mathcal{J}(\hat{\alpha}_y,z_k) \leq \mathcal{J}(\beta,z_k)$ for all $\beta \in A$. By Assumption \ref{hyp:J_2}, there is a unique minimizer of $\mathcal{J}(\alpha,z_k)$ in the $\alpha$ argument, and thus, $\hat{\balpha}$ is constant for $\lambda_I$-a.e.\ $y \in [(k-1)/K,k/K)$. Clearly the constant values, given by $\mu^K( \hat{\balpha})$, is a Nash equilibrium for the above $K$-population MFG.
 
    Now let $\hat{\bm{\alpha}}^K$ be a Nash equilibrium for the $K$-population MFG.
    For all $k$ and for all $\beta \in A$, we have:
    $$
        \mathcal{J}(\hat{\alpha}^K_k, z_k) \leq \mathcal{J}(\beta, z_k),
    $$
    and $\zb$ uniquely solves $z_k:=\frac{1}{K}\sum_{k'=1}^K W_{k,k'} \mathbb{E}[X_{k',\hat{\alpha}^K_{k'},z_{k'}}].$
    Clearly, we have $\Zb\hat{\balpha}=\psi^K(\zb)$, where $\Zb$ is defined with respect to the graphon $\Psi^K(W)$. Thus, for all $x$ and all $\beta \in A$:
    $$
        \mathcal{J}(\hat{\alpha}_x, [\Zb\hat{\balpha}]_x) \leq \mathcal{J}(\beta, [\Zb\hat{\balpha}]_x), 
    $$
    which means $\hat{\balpha}$ is a Nash equilibrium for the graphon game with graphon $\Psi^K(W)$.
 \end{proof}

\subsection[Existence of Nash Equilibria]{\textbf{Existence of Nash Equilibria}}

Now we return to our analysis of graphon games. In the following, we make use of an additional assumption:
\begin{assumption}
\label{hyp:2} 
There exists a finite positive constant $c_0$ such that $|b(\alpha,z)|\leq c_0$ for all $\alpha\in A$ and $z\in \mathbb{R}$.
\end{assumption}

\begin{theorem}\label{thm:existence}
Under Assumptions \ref{hyp:b}, \ref{hyp:J_2}, and \ref{hyp:2}, and assuming $ \sqrt{c_z} \| \Wb \| < 1$, there is at least one Nash equilibrium.
\end{theorem}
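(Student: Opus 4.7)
My plan is to apply Schauder's fixed point theorem to the self-map $\Bb\Zb : L^2(I)\to L^2(I)$; by Proposition \ref{pr:Nash_equivalence_continuum}, any fixed point of $\Bb\Zb$ lying in $\AA$ is a Nash equilibrium, and since $(\Bb\zb)_x$ takes values in $A$ by construction, it suffices to produce a fixed point in $L^2(I)$. The first step is to exhibit a non-empty, closed, convex, bounded invariant set $K\subset L^2(I)$. Assumption \ref{hyp:2} is the crucial new ingredient: combined with the identity $\Zb\balpha=\Wb[b(\balpha,\Zb\balpha)]$ from Remark \ref{re:z=w}, it yields the uniform a priori bound $\|\Zb\balpha\|_{L^2(I)}\le c_0\|\Wb\|$. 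Applying Lemma \ref{lemma:B_Lipschitz} with the zero function as reference then gives
$$
\|\Bb\Zb\balpha\|_{L^2(I)}\;\le\;\|\Bb\mathbf{0}\|_{L^2(I)}+\frac{\ell_J}{\ell_c}\,c_0\|\Wb\|\;=:\;R,
$$
where $\|\Bb\mathbf{0}\|_{L^2(I)}$ is finite because $\Bb\mathbf{0}$ is the constant function equal to the unique minimizer of $\mathcal{J}(\cdot,0)$ on $A=\RR$, whose existence follows from strong convexity and the attendant coercivity. Thus $K=\{\balpha\in L^2(I):\|\balpha\|_{L^2(I)}\le R\}$ is left invariant by $\Bb\Zb$.

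Next I would establish compactness of $\Bb\Zb|_K$. Since $w\in L^2(I\times I)$, the integral operator $\Wb$ is Hilbert--Schmidt and hence compact on $L^2(I)$. The set $\{b(\balpha,\Zb\balpha):\balpha\in K\}$ is uniformly $L^2$-bounded by $c_0$ thanks to Assumption \ref{hyp:2}, so its image $\Zb(K)$ under $\Wb$ is relatively compact in $L^2(I)$, and composition with the Lipschitz map $\Bb$ (Lemma \ref{lemma:B_Lipschitz}) preserves relative compactness. Continuity of $\Bb\Zb$ follows from Lemma \ref{lemma:B_Lipschitz} combined with Lipschitz continuity of $\Zb$ on $L^2(I)$, obtained by repeating verbatim the contraction-iteration estimate of Proposition \ref{pr:Lip_of_z_N} in the graphon setting. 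Schauder's fixed point theorem applied to the continuous compact self-map $\Bb\Zb:K\to K$ then delivers a fixed point $\hat\balpha\in K\subseteq\AA$, which by Proposition \ref{pr:Nash_equivalence_continuum} is the required Nash equilibrium.

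The main obstacle I anticipate is the careful handling of $\Bb$ as a well-defined map on all of $L^2(I)$: one must check that for every $\zb\in L^2(I)$, the pointwise argmin $x\mapsto\arg\min_\alpha \mathcal{J}(\alpha,z_x)$ is Borel-measurable and square-integrable. Strong convexity of $\mathcal{J}(\cdot,z)$ yields uniqueness and continuous dependence of the argmin on $z$, hence Borel measurability in $x$, while the $L^2$ bound is exactly the Lipschitz estimate of Lemma \ref{lemma:B_Lipschitz} applied with the zero function. Beyond this technical point, the argument hinges structurally on the Hilbert--Schmidt compactness of $\Wb$: dropping either the square-integrability of $w$ or Assumption \ref{hyp:2} would destroy, respectively, the compactness of the image or the a priori bound needed for the invariant ball, and the infinite-dimensional Schauder argument would collapse.
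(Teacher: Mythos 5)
Your proposal is correct and follows essentially the same route as the paper's proof: the uniform bound $|b|\le c_0$ from Assumption \ref{hyp:2} together with $\Zb\balpha=\Wb[b(\balpha,\Zb\balpha)]$ confines the range of $\Zb$ to the image under the compact (Hilbert--Schmidt) operator $\Wb$ of a ball, the Lipschitz property of $\Bb$ from Lemma \ref{lemma:B_Lipschitz} gives continuity and preserves relative compactness, and Schauder's theorem combined with Proposition \ref{pr:Nash_equivalence_continuum} concludes. The only cosmetic difference is that you build an explicit invariant ball while the paper applies Schauder directly to $\Bb\Zb$ on $L^2(I)$ with range in a compact set, and your appeal to a graphon analogue of Proposition \ref{pr:Lip_of_z_N} is already available as Lemma \ref{lemma:W_Lip}.
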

\begin{proof}
First, let $\chi:=c_0$.
Remark \ref{re:z=w} implies that for any $\balpha\in L^2(I)$,  $\Zb\balpha$ belongs to the range $\Wb(B_\chi)$ where $B_\chi$ denotes the closed ball $B_\chi=\{\zb \in L^2(I); \| \zb \| \leq \chi\}$. Since the graphon operator $\Wb$ is a Hilbert-Schmidt operator, it is \textit{a fortiori} a compact operator, implying that $\Wb(B_\chi)$  is a relatively compact subset of $L^2(I)$. Being Lipschitz (recall Lemma \ref{lemma:B_Lipschitz}), $\Bb$ is also continuous and the image by $\Bb$ of the closure $\overline{\Wb(B_\chi)}$, call it momentarily $K$, is a compact subset of $L^2(I)$. So the (nonlinear) operator $\Bb\Zb$  from $L^2(I)$ into itself is continuous and its range is contained in the compact set $K$. Schauder's theorem implies that this operator has a fixed point, and according to Proposition \ref{pr:Nash_equivalence_continuum}, the latter is a Nash equilibrium for the game.
\end{proof}

Schauder's theorem allows us to prove existence of Nash equilibria under rather mild assumptions, but unfortunately, it does not guarantee uniqueness. The latter is obtained under stronger assumptions. For example, it holds when the fixed point is derived from a strict contraction. We give a sufficient condition for this to be the case in the next subsection. There, we also give a uniqueness condition inspired by the Lasry-Lions monotonicity condition prevalent in the theory of mean field games.

\subsection{Uniqueness of Nash Equilibria}

\begin{lemma}
\label{lemma:W_Lip}
Under Assumption \ref{hyp:b}, and assuming $ \sqrt{c_z} \| \Wb \| < 1$, for any $\balpha^1$ and $\balpha^2$ in $L^2(I)$, we have the $L^2$ bound:
\begin{equation}
\label{fo:l2}
    \|\Zb\balpha^1-\Zb\balpha^2\|_{L^2(I)} \leq \frac{\sqrt{c_\alpha} \|\bW\|}{1-\sqrt{c_z} \|\bW\|}\|\balpha^1-\balpha^2 \|_{L^2(I)},
\end{equation}
as well as the pointwise bound:
\begin{equation}
\label{fo:pointwise}
    \left\vert [\Zb\balpha^1]_x-[\Zb\balpha^2]_x \right\vert \leq \frac{\sqrt{c_\alpha}}{1-\sqrt{c_z} \|\bW\|}\Bigl( \int_I w(x,y)^2dy\Bigr)^{1/2} \left\Vert \balpha^1-\balpha^2 \right\Vert_{L^2(I)},\qquad \lambda_I\text{-a.e.}\;\; x\in I.
\end{equation}
\end{lemma}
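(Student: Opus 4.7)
The plan is to derive both bounds by starting from the fixed point identity $[\Zb\balpha^i]_x=\int_I w(x,y)\,b(\alpha^i_y,[\Zb\balpha^i]_y)\,dy$ supplied by Proposition \ref{pr:uniqueness_of_z} (equivalently Remark \ref{re:z=w}), writing the difference $[\Zb\balpha^1]_x-[\Zb\balpha^2]_x$ as a single integral, and then applying the Lipschitz bound on $b$. The only real subtlety is that $\Zb\balpha^i$ appears inside $b$ on the right, which creates a self-referential estimate that must be absorbed using $\sqrt{c_z}\|\Wb\|<1$. Nothing beyond Assumption~\ref{hyp:b}, the operator norm definition of $\|\Wb\|$, and Cauchy--Schwarz is needed.

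For the $L^2$ bound \eqref{fo:l2}, I would proceed as follows. Writing
\[
\Zb\balpha^1-\Zb\balpha^2=\Wb\bigl[b(\balpha^1,\Zb\balpha^1)-b(\balpha^2,\Zb\balpha^2)\bigr]
\]
and taking the $L^2(I)$ norm yields $\|\Zb\balpha^1-\Zb\balpha^2\|_{L^2(I)}\le \|\Wb\|\cdot\|b(\balpha^1,\Zb\balpha^1)-b(\balpha^2,\Zb\balpha^2)\|_{L^2(I)}$. Using Assumption~\ref{hyp:b} pointwise in $y$ together with the elementary inequality $\sqrt{a+b}\le\sqrt a+\sqrt b$ and the triangle inequality in $L^2(I)$ gives
\[
\|b(\balpha^1,\Zb\balpha^1)-b(\balpha^2,\Zb\balpha^2)\|_{L^2(I)}\le \sqrt{c_\alpha}\,\|\balpha^1-\balpha^2\|_{L^2(I)}+\sqrt{c_z}\,\|\Zb\balpha^1-\Zb\balpha^2\|_{L^2(I)}.
\]
Moving the $\sqrt{c_z}\|\Wb\|\cdot\|\Zb\balpha^1-\Zb\balpha^2\|_{L^2(I)}$ term to the left, and dividing by the (strictly positive) factor $1-\sqrt{c_z}\|\Wb\|$, yields exactly \eqref{fo:l2}. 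The absorption step is where the hypothesis $\sqrt{c_z}\|\Wb\|<1$ is essential; this is the main (and only) obstacle.

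For the pointwise bound \eqref{fo:pointwise}, I would not take $L^2$ norms in $x$ but rather fix $x$ and bound the integral directly by Cauchy--Schwarz in $y$:
\[
|[\Zb\balpha^1]_x-[\Zb\balpha^2]_x|\le \Bigl(\int_I w(x,y)^2\,dy\Bigr)^{1/2}\|b(\balpha^1,\Zb\balpha^1)-b(\balpha^2,\Zb\balpha^2)\|_{L^2(I)}.
\]
The second factor was already estimated above by $\sqrt{c_\alpha}\|\balpha^1-\balpha^2\|_{L^2(I)}+\sqrt{c_z}\|\Zb\balpha^1-\Zb\balpha^2\|_{L^2(I)}$, and plugging in the $L^2$ bound \eqref{fo:l2} for the second term collapses this, after a short simplification, to the single factor $\frac{\sqrt{c_\alpha}}{1-\sqrt{c_z}\|\Wb\|}\|\balpha^1-\balpha^2\|_{L^2(I)}$. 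Combining with the Cauchy--Schwarz factor yields \eqref{fo:pointwise}. Both inequalities hold for $\lambda_I$-almost every $x$ because they rest on the fixed point identity which itself holds $\lambda_I$-a.e.
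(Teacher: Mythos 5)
Your proposal is correct and follows essentially the same route as the paper's proof: both bounds are derived from the fixed point identity $\Zb\balpha=\Wb[b(\balpha,\Zb\balpha)]$, with the $L^2$ estimate obtained by the operator-norm bound plus absorption of the $\sqrt{c_z}\|\Wb\|$ term, and the pointwise estimate obtained by Cauchy--Schwarz in $y$ followed by substitution of the $L^2$ bound. The only detail you make more explicit than the paper is the passage from the squared Lipschitz condition of Assumption \ref{hyp:b} to the additive bound via $\sqrt{a+b}\le\sqrt{a}+\sqrt{b}$, which is a harmless clarification.
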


\begin{proof}
We first prove \eqref{fo:l2}.
From equation \eqref{eq:connection_W_A} we have:
\begin{equation*}
\begin{split}
    \|\Zb\balpha^1-\Zb\balpha^2\|_{L^2(I)}
    &=\|\Wb\left[b(\balpha^1, \Zb\balpha^1)-b(\balpha^2, \Zb\balpha^2)\right]\|_{L^2(I)}\\
    &\leq \|\Wb\| \; \|b(\balpha^1, \Zb\balpha^1)-b(\balpha^2, \Zb\balpha^2)\|_{L^2(I)}\\
    & \leq  \|\Wb\| \; \left( \sqrt{c_{\alpha}}\|\balpha^1-\balpha^2 \|_{L^2(I)} +\sqrt{c_z} \|\Zb\balpha^1-\Zb\balpha^2\|_{L^2(I)} \right). 
\end{split}
\end{equation*}
We conclude because $\sqrt{c_z}\|\bW\|<1$ by assumption.
We now prove \eqref{fo:pointwise}. For $\lambda_I$-a.e.\ $x\in I$, we have:
\begin{equation*}
    \begin{split}
        \left\vert [\Zb\balpha^1]_x-[\Zb\balpha^2]_x \right\vert 
        &=\left\vert \int_I w(x,y)\left[b(\alpha^1_y,[\Zb\balpha^1]_y)-b(\alpha^2_y,[\Zb\balpha^2]_y) \right] dy \right\vert \\
        &\leq \Bigl( \int_I w(x,y)^2dy\Bigr)^{1/2}\Bigl(\int_I\Bigl|b(\alpha^1_y,[\Zb\balpha^1]_y)-b(\alpha^2_y,[\Zb\balpha^2]_y)\Bigr|^2 dy \Bigr)^{1/2} \\
        & \leq \Bigl( \int_I w(x,y)^2dy\Bigr)^{1/2}\Bigl(\sqrt{c_\alpha} \left\Vert \balpha^1-\balpha^2 \right\Vert_{L^2(I)}+\sqrt{c_z} \left\Vert \Zb\balpha^1-\Zb\balpha^2\right\Vert_{L^2(I)}\Bigr)\\
        & \leq \frac{\sqrt{c_\alpha}}{1-\sqrt{c_z} \|\bW\|} \Bigl( \int_I w(x,y)^2dy\Bigr)^{1/2}\left\Vert \balpha^1-\balpha^2 \right\Vert_{L^2(I)},
    \end{split}
\end{equation*}
where we used the $L^2$ bound \eqref{fo:l2}.
\end{proof}

\begin{proposition} 
\label{pr:graphon_uniqueness}
Under Assumption \ref{hyp:b} and Assumption \ref{hyp:J_2}, and assuming $ \sqrt{c_z} \| \Wb \| < 1$, if:
\begin{equation}
\label{fo:existence+uniqueness}
    \frac{\ell_J}{\ell_c}\cdot \frac{\sqrt{c_\alpha} \|\bW\|}{1-\sqrt{c_z} \|\bW\|}<1,
\end{equation}
then there exists a unique Nash equilibrium.
\end{proposition}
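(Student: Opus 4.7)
The strategy is to apply the Banach fixed point theorem to the composite map $\Bb\Zb: L^2(I) \to L^2(I)$, exploiting the Lipschitz estimates already established. Recall from Proposition \ref{pr:Nash_equivalence_continuum} that Nash equilibria of the graphon game are precisely the fixed points of $\Bb\Zb$ in $\AA$, so it suffices to show that $\Bb\Zb$ has a unique fixed point.

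First, I would note that $\Bb\Zb$ is well-defined as a map from $L^2(I)$ into itself: Proposition \ref{pr:uniqueness_of_z} ensures $\Zb$ takes $L^2(I)$ into $L^2(I)$, and the discussion preceding Proposition \ref{pr:Nash_equivalence_continuum} (together with Lemma \ref{lemma:B_Lipschitz}) ensures that $\Bb$ maps $L^2(I)$ into $L^2(I)$ with values $\lambda_I$-a.e.\ in $A$, i.e.\ into $\AA$. Then for any $\balpha^1,\balpha^2\in L^2(I)$, by combining the $L^2$ bound from Lemma \ref{lemma:W_Lip} on $\Zb$ with the Lipschitz bound from Lemma \ref{lemma:B_Lipschitz} on $\Bb$, I obtain
\begin{equation*}
    \|\Bb\Zb\balpha^1 - \Bb\Zb\balpha^2\|_{L^2(I)}
    \le \frac{\ell_J}{\ell_c}\|\Zb\balpha^1 - \Zb\balpha^2\|_{L^2(I)}
    \le \frac{\ell_J}{\ell_c}\cdot\frac{\sqrt{c_\alpha}\|\bW\|}{1-\sqrt{c_z}\|\bW\|}\|\balpha^1 - \balpha^2\|_{L^2(I)}.
\end{equation*}
By hypothesis \eqref{fo:existence+uniqueness}, the constant on the right-hand side is strictly less than $1$, so $\Bb\Zb$ is a strict contraction on the Hilbert space $L^2(I)$.

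By the Banach fixed point theorem, $\Bb\Zb$ admits a unique fixed point $\hat\balpha\in L^2(I)$. Since $\Bb$ takes values in $\AA$, we automatically have $\hat\balpha = \Bb\Zb\hat\balpha \in \AA$. By Proposition \ref{pr:Nash_equivalence_continuum}, $\hat\balpha$ is a Nash equilibrium, and uniqueness of the fixed point yields uniqueness of the Nash equilibrium. There is no serious obstacle here; the proof is essentially a concatenation of the two Lipschitz estimates, and the hypothesis \eqref{fo:existence+uniqueness} is exactly the condition needed to make the product of the two Lipschitz constants a contraction factor.
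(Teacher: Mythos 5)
Your proposal is correct and follows exactly the paper's own argument: chain the Lipschitz bounds from Lemma \ref{lemma:B_Lipschitz} and Lemma \ref{lemma:W_Lip} to show $\Bb\Zb$ is a strict contraction under \eqref{fo:existence+uniqueness}, then apply the Banach fixed point theorem and Proposition \ref{pr:Nash_equivalence_continuum}. The additional remarks on well-definedness of $\Bb\Zb$ are a harmless elaboration of what the paper leaves implicit.
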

\begin{proof}
With Lemmas \ref{lemma:B_Lipschitz} and \ref{lemma:W_Lip}, we have:
\begin{equation*}
\begin{split}
    \| \Bb \Zb \balpha^1-\Bb \Zb \balpha^2 \|_{L^2(I)} &\leq \frac{\ell_J}{\ell_c} \|\Zb\balpha^1-\Zb\balpha^2\|_{L^2(I)}\\
    & \leq \frac{\ell_J}{\ell_c} \cdot \frac{\sqrt{c_\alpha}\|\bW\|}{1-\sqrt{c_z}\|\bW\|}\|\balpha^1-\balpha^2 \|_{L^2(I)}.
\end{split}
\end{equation*}
Thus, $\Bb \Zb$ is a strict contraction, and by Banach's fixed point theorem, it has a unique fixed point.
\end{proof}

We now introduce a notion of monotonicity, inspired by the Lasry-Lions monotonicity condition in the mean field games literature. For a given graphon $w$, we say a function $h(\alpha,z)$ is \textit{monotone} if for all $\balpha^1,\balpha^2 \in \mathbb{A}$, we have
\begin{equation}
\label{eq:monotonicity}
    h(\alpha^1_x,[\Zb \balpha^1]_x) -h(\alpha^1_x,[\Zb \balpha^2]_x) -h(\alpha^2_x,[\Zb \balpha^1]_x) +h(\alpha^2_x,[\Zb \balpha^2]_x) \geq 0
\end{equation}
holds on a set of positive $\lambda_I$-measure.

\begin{proposition}
\label{pr:monotonicity}
Assume the conclusion of Proposition \ref{pr:uniqueness_of_z} holds so that $\Zb$ is well defined. Also, suppose that for each $z\in \mathbb{R}$, $\cJ(\cdot,z)$ has a unique minimizer, and that $\cJ(\alpha,z)$ is monotone in the above sense. Then there is at most one Nash equilibrium.
\end{proposition}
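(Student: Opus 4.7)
The plan is to argue by contradiction, adapting the classical Lasry--Lions monotonicity argument to the graphon setting. Suppose $\hat\balpha^1, \hat\balpha^2 \in \AA$ are two Nash equilibria, and invoke Proposition \ref{pr:Nash_equivalence_continuum} to write, for each $i \in \{1,2\}$ and $\lambda_I$-a.e.\ $x \in I$, the best response condition $\cJ(\hat\alpha^i_x, [\Zb\hat\balpha^i]_x) \leq \cJ(\beta, [\Zb\hat\balpha^i]_x)$ valid for every $\beta \in A$. The aim is then to combine these two optimality conditions with the monotonicity inequality \eqref{eq:monotonicity} applied to the pair $(\hat\balpha^1,\hat\balpha^2)$.

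First I would specialise $\beta = \hat\alpha^2_x$ in the $i=1$ inequality and $\beta = \hat\alpha^1_x$ in the $i=2$ inequality, yielding two nonpositive pointwise Nash gaps
$$
g_1(x) := \cJ(\hat\alpha^1_x,[\Zb\hat\balpha^1]_x) - \cJ(\hat\alpha^2_x,[\Zb\hat\balpha^1]_x) \leq 0,
$$
$$
g_2(x) := \cJ(\hat\alpha^2_x,[\Zb\hat\balpha^2]_x) - \cJ(\hat\alpha^1_x,[\Zb\hat\balpha^2]_x) \leq 0,
$$
valid for $\lambda_I$-a.e.\ $x$. A direct rearrangement shows that $D(x) := g_1(x) + g_2(x)$ is precisely the monotonicity expression appearing in \eqref{eq:monotonicity} for the pair $(\hat\balpha^1,\hat\balpha^2)$, so we have $D(x) \leq 0$ almost everywhere. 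The monotonicity hypothesis in turn supplies a set $E \subset I$ of positive $\lambda_I$-measure on which $D(x) \geq 0$; intersecting $E$ with the full-measure set where $D(x) \leq 0$ gives a set $E'$ of positive measure on which $D \equiv 0$. Since $D$ is the sum of the two nonpositive quantities $g_1, g_2$, both must vanish on $E'$. From $g_1(x) = 0$ we see that both $\hat\alpha^1_x$ and $\hat\alpha^2_x$ minimise $\cJ(\cdot, [\Zb\hat\balpha^1]_x)$, so the uniqueness-of-minimizer hypothesis forces $\hat\alpha^1_x = \hat\alpha^2_x$ on $E'$.

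The main obstacle is then to promote this equality from the positive-measure set $E'$ to agreement $\lambda_I$-a.e., i.e., to $L^2(I)$ equality of the two profiles. Under the natural strict reading of the monotonicity condition --- namely, that $D(x) > 0$ on a positive-measure set whenever $\balpha^1 \neq \balpha^2$ in $L^2(I)$ --- the contradiction is direct, since the almost-everywhere bound $D(x) \leq 0$ established above is incompatible with such a strict-positivity set, forcing $\hat\balpha^1 = \hat\balpha^2$ in $L^2(I)$. If only the nonstrict pointwise form is available, I would instead iterate Steps~1--2 by considering the (still distinct) restrictions of $\hat\balpha^1$ and $\hat\balpha^2$ to $I \setminus E'$, re-applying the monotonicity there to enlarge $E'$, and invoking Zorn's lemma to produce a maximal agreement set, which must then have full $\lambda_I$-measure. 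Either way, the uniqueness-of-minimizer assumption is what converts the cost equality into equality of the equilibrium actions themselves.
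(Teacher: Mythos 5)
Your proposal follows the same Lasry--Lions route as the paper: combine the two Nash optimality conditions with uniqueness of the minimizers and play the resulting sign against the monotonicity inequality. Your first two steps are in fact a more careful rendering of the paper's argument, which simply asserts that the quantity $D(x)=\cJ(\hat\alpha^1_x,[\Zb\hat\balpha^1]_x)-\cJ(\hat\alpha^2_x,[\Zb\hat\balpha^1]_x)+\cJ(\hat\alpha^2_x,[\Zb\hat\balpha^2]_x)-\cJ(\hat\alpha^1_x,[\Zb\hat\balpha^2]_x)$ is \emph{strictly} negative for $\lambda_I$-a.e.\ $x$ ``by minimality and uniqueness of the minimizers'' and contradicts this with the nonnegativity on a positive-measure set. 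That is, the paper implicitly works with the strict reading you identify (strictness holds exactly where $\hat\alpha^1_x\neq\hat\alpha^2_x$, and the worked example after the proposition verifies the strict form $\int_I D\,dx>0$ for distinct profiles); under that reading your argument closes exactly as the paper's does.

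The one genuine problem is your fallback for the nonstrict reading. The monotonicity condition is a statement about pairs of full profiles in $\AA$, with the aggregates $\Zb\balpha^1,\Zb\balpha^2$ computed from the entire profiles; it cannot be ``re-applied on $I\setminus E'$,'' and applying it again to the same pair $(\hat\balpha^1,\hat\balpha^2)$ may simply return the same positive-measure set $E'$, so there is no mechanism forcing the agreement set to grow. Consequently the maximal agreement set produced by Zorn's lemma need not have full measure, and that branch of the argument does not establish uniqueness. You should drop it and state plainly that the conclusion uses the strict form of \eqref{eq:monotonicity} (or, equivalently, strict inequality for profiles that differ in $L^2(I)$), which is what the paper's proof and its example actually rely on.
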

\begin{proof}
Suppose $\balpha^1$ and $\balpha^2$ are two Nash equilibria for the sake of contradiction.
Then for $\lambda_I$-a.e.\ $x\in I$:
$$\cJ(\alpha^1_x,[\Zb \balpha^1]_x)-\cJ(\alpha^2_x,[\Zb \balpha^1]_x)+\cJ(\alpha^2_x,[\Zb \balpha^2]_x)-\cJ(\alpha^1_x,[\Zb \balpha^2]_x) < 0,$$
by minimality and uniqueness of the minimizers. However, by monotonicity, the above quantity is nonnegative on a set of positive measure, which is a contradiction.
\end{proof}

As an example, consider the constant graphon $w(\cdot,\cdot)=1$, and 
$$
b_0(\alpha,z)=\alpha, \quad \text{and} \quad f(x,\alpha,z)= \frac{3}{2} \alpha^2 -(x-z)^2.
$$
We have $\cJ(\alpha,z) = \mathbb{E} \left[ \frac{3}{2}\alpha^2 -(\alpha+\xi-z)^2 \right]= \frac{1}{2}\alpha^2 + 2\alpha z - z^2 -\mathbb{E}(\xi^2)$. Then for any $\balpha$, we have $[\Zb \balpha]_x=\int_I \alpha_x dx =: \bar{\balpha}$ for all $x \in I$. For any two control profiles $\balpha_1$ and $\balpha_2$ which are not equal in $L^2(I)$, we must have $\bar{\balpha}_1 \neq \bar{\balpha}_2$, as otherwise, the uniqueness of the minimizer would give $\balpha_1 = \balpha_2$. Then we have:
\begin{equation*}
\begin{split}
    &\int_I \Big[\frac{1}{2}\alpha_1^2 + 2\alpha_1 \bar{\alpha}_1 - \bar{\alpha}_1^2 -\mathbb{E}(\xi^2) -\frac{1}{2}\alpha_1^2 - 2\alpha_1 \bar{\alpha}_2 + \bar{\alpha}_2^2 +\mathbb{E}(\xi^2)\\
    &-\frac{1}{2}\alpha_2^2 - 2\alpha_2 \bar{\alpha}_1 + \bar{\alpha}_1^2 +\mathbb{E}(\xi^2) +\frac{1}{2}\alpha_2^2+2\alpha_2 \bar{\alpha}_2 - \bar{\alpha}_2^2 -\mathbb{E}(\xi^2)\Big] dx \\
     &= 2(\bar\alpha_1-\bar\alpha_2)\int_I (\alpha_1-\alpha_2)  dx\\
     &= 2(\bar\alpha_1-\bar\alpha_2)^2> 0,
\end{split}
\end{equation*}
so \eqref{eq:monotonicity} holds on a set of positive $\lambda_I$-measure, and thus $\cJ(\alpha,z)$ is monotone. Clearly, we also have a unique minimizer of $\cJ(\cdot,z)$ for each $z$. Thus, there is at most one Nash equilibrium. Notice that for this example, we have $c_\alpha=1$, $c_z=0$, $\| \Wb \|=1$, $\ell_c=1$, $\ell_J=2$, and thus, \eqref{fo:existence+uniqueness} is not satisfied. Conversely, consider the same example as above except with a different sign in $f$: $f(x,\alpha,z)= \frac{3}{2} \alpha^2 +(x-z)^2$. Now we have $\ell_c=5$ and $\ell_J=2$, and thus \eqref{fo:existence+uniqueness} is satisfied, even though $f$ is not monotone. Thus, Proposition \ref{pr:graphon_uniqueness} and Proposition \ref{pr:monotonicity} provide two different paths to uniqueness.

\subsection{Stability Estimate} 
\label{sub:stability}
The following stability result will be useful in the sequel.

\begin{lemma}\label{lemma:W_W_tilde}
Under Assumptions \ref{hyp:b}, \ref{hyp:J_2} and \ref{hyp:2}, for any two graphons $w$ and $w'$ with corresponding graphon operators $\Wb$ and $\Wb'$ satisfying $ \sqrt{c_z} \| \Wb \| < 1$ and $ \sqrt{c_z} \| \Wb' \| < 1$, and operators $\Zb$ and $\Zb'$  as defined in equation \eqref{eq:z_definition} respectively, for any $\balpha\in L^2(I)$ we have:
\begin{equation*}
    \|\Zb\balpha- \Zb'\balpha\|_{L^2(I)} \leq \frac{c_0}{1-\sqrt{c_z}\|\bW\|}\|\Wb-\Wb' \|.
\end{equation*}
\end{lemma}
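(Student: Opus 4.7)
The plan is to exploit the fixed-point characterization recalled in Remark \ref{re:z=w}, namely $\Zb\balpha = \Wb[b(\balpha, \Zb\balpha)]$ and analogously $\Zb'\balpha = \Wb'[b(\balpha, \Zb'\balpha)]$, and then to add and subtract a common term so as to separate the contribution coming from replacing $\Wb$ by $\Wb'$ (which produces the $\|\Wb - \Wb'\|$ factor we are after) from the contribution coming from the change in the $\Zb$ argument of $b$ (which we will absorb on the left-hand side).

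Concretely, I would first write
\begin{equation*}
\Zb\balpha - \Zb'\balpha \;=\; \Wb\bigl[b(\balpha,\Zb\balpha) - b(\balpha,\Zb'\balpha)\bigr] \;+\; (\Wb-\Wb')\bigl[b(\balpha,\Zb'\balpha)\bigr],
\end{equation*}
and take $L^2(I)$ norms, using submultiplicativity of the operator norm on both terms:
\begin{equation*}
\|\Zb\balpha - \Zb'\balpha\|_{L^2(I)} \;\le\; \|\Wb\|\,\|b(\balpha,\Zb\balpha) - b(\balpha,\Zb'\balpha)\|_{L^2(I)} \;+\; \|\Wb-\Wb'\|\,\|b(\balpha,\Zb'\balpha)\|_{L^2(I)}.
\end{equation*}

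Next I would control the two right-hand-side factors. For the first, Assumption \ref{hyp:b} (Lipschitz continuity of $b$ in its second argument with constant $\sqrt{c_z}$) gives $\|b(\balpha,\Zb\balpha) - b(\balpha,\Zb'\balpha)\|_{L^2(I)} \le \sqrt{c_z}\,\|\Zb\balpha - \Zb'\balpha\|_{L^2(I)}$. For the second, Assumption \ref{hyp:2} provides the uniform pointwise bound $|b(\alpha,z)|\le c_0$, so that $\|b(\balpha,\Zb'\balpha)\|_{L^2(I)}\le c_0$ (since $\lambda_I(I)=1$).

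Putting these together yields
\begin{equation*}
\|\Zb\balpha - \Zb'\balpha\|_{L^2(I)} \;\le\; \sqrt{c_z}\,\|\Wb\|\,\|\Zb\balpha - \Zb'\balpha\|_{L^2(I)} \;+\; c_0\,\|\Wb-\Wb'\|,
\end{equation*}
and since $\sqrt{c_z}\,\|\Wb\| < 1$ by hypothesis, we can move the first term to the left-hand side and divide by $1 - \sqrt{c_z}\,\|\Wb\|$ to conclude. There is no real obstacle here; the only subtle point is being careful to insert the term $\Wb[b(\balpha,\Zb'\balpha)]$ (rather than $\Wb'[b(\balpha,\Zb\balpha)]$) in the add-and-subtract step, so that the residual Lipschitz-in-$z$ piece is pre-multiplied by $\|\Wb\|$ and can be controlled by the hypothesis $\sqrt{c_z}\|\Wb\|<1$ rather than by the corresponding (unused) bound on $\|\Wb'\|$.
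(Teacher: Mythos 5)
Your proposal is correct and follows essentially the same route as the paper's proof: the same add-and-subtract of $\Wb[b(\balpha,\Zb'\balpha)]$, the same use of the Lipschitz-in-$z$ bound $\sqrt{c_z}$ together with $\sqrt{c_z}\|\Wb\|<1$ to absorb the first term, and the same use of Assumption \ref{hyp:2} to bound $\|b(\balpha,\Zb'\balpha)\|_{L^2(I)}$ by $c_0$. Nothing further is needed.
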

\begin{proof}
Starting from equation \eqref{eq:connection_W_A}, we get:
\begin{equation*}
    \begin{split}
        \|\Zb\balpha- \Zb'\balpha\|_{L^2(I)}&=\left\Vert\Wb[b(\balpha, \Zb\balpha) ]-\Wb'[b(\balpha, \Zb'\balpha)]\right\Vert_{L^2(I)}\\
        & \leq\left\Vert\Wb\left[b(\balpha, \Zb\balpha)\right]-\Wb\left[b(\balpha, \Zb'\balpha)\right]\right\Vert_{L^2(I)}\\
        &\hskip 75pt
        +\left\Vert\Wb\left[b(\balpha, \Zb'\balpha)\right]-\Wb'\left[b(\balpha, \Zb'\balpha)\right]\right\Vert_{L^2(I)}\\
        & \leq \|\Wb\| \|b(\balpha, \Zb\balpha)-b(\balpha, \Zb'\balpha) \|_{L^2(I)} \\
        &\hskip 75pt 
        + \|\Wb-\Wb' \|  \|b(\balpha,\Zb'\balpha) \|_{L^2(I)}\\
        & \leq \|\bW\|\cdot \sqrt{c_z}\cdot \| \Zb\balpha-\Zb'\balpha \|_{L^2(I)}+c_0 \|\Wb-\Wb' \|,
    \end{split}
\end{equation*}
and the conclusion follows.
\end{proof}

\begin{theorem} \label{thm:perturbation}
Assume Assumptions \ref{hyp:b}, \ref{hyp:J_2} and \ref{hyp:2}, and $ \sqrt{c_z} \| \Wb \| < 1$ and \eqref{fo:existence+uniqueness} holds for the graphon $w$ with corresponding graphon operator $\Wb$. We denote its unique Nash equilibrium by $\balpha$. Then for any graphon $w'$ with corresponding graphon operator $\Wb'$ satisfying $ \sqrt{c_z} \| \Wb' \| < 1$ and any Nash equilibrium $\balpha'$ for the graphon game associated to $w'$, we have:
\begin{equation*}
    \|\balpha-\balpha'\|_{L^2(I)} \leq \kappa||\Wb-\Wb'||,
\end{equation*}
where
$$ 
\kappa:=\frac{c_0 \ell_J (1-\sqrt{c_z} \|\bW\|)}{(1-\sqrt{c_z}\|\bW\|)\left[\ell_c(1-\sqrt{c_z} \|\bW\|)-\ell_J\sqrt{c_\alpha}\|\bW\|\right]}.$$
\end{theorem}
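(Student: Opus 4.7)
The plan is to exploit the fact that the best response operator $\Bb$ depends only on the cost function $\cJ$ and not on the graphon, so that both equilibria are characterized by Proposition \ref{pr:Nash_equivalence_continuum} as fixed points of $\Bb\Zb$ and $\Bb\Zb'$ respectively. That is, writing $\Bb$ for the common best response map, we have
\begin{equation*}
\balpha = \Bb\Zb\balpha \qquad \text{and} \qquad \balpha' = \Bb\Zb'\balpha',
\end{equation*}
in $L^2(I)$. The whole proof is then a triangle inequality together with the three Lipschitz-type estimates already established.

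First I would bound
\begin{equation*}
\|\balpha-\balpha'\|_{L^2(I)} = \|\Bb\Zb\balpha - \Bb\Zb'\balpha'\|_{L^2(I)} \leq \frac{\ell_J}{\ell_c} \|\Zb\balpha-\Zb'\balpha'\|_{L^2(I)}
\end{equation*}
using Lemma \ref{lemma:B_Lipschitz}. Then I would split the aggregate difference through the auxiliary point $\Zb\balpha'$, namely
\begin{equation*}
\|\Zb\balpha-\Zb'\balpha'\|_{L^2(I)} \leq \|\Zb\balpha-\Zb\balpha'\|_{L^2(I)} + \|\Zb\balpha'-\Zb'\balpha'\|_{L^2(I)},
\end{equation*}
and estimate the first term by Lemma \ref{lemma:W_Lip} (which uses only the graphon $w$, hence involves $\|\bW\|$) and the second by Lemma \ref{lemma:W_W_tilde} (which compares $\Zb$ and $\Zb'$ at the same argument $\balpha'$). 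This yields
\begin{equation*}
\|\balpha-\balpha'\|_{L^2(I)} \leq \frac{\ell_J}{\ell_c}\cdot\frac{\sqrt{c_\alpha}\|\bW\|}{1-\sqrt{c_z}\|\bW\|}\|\balpha-\balpha'\|_{L^2(I)} + \frac{\ell_J}{\ell_c}\cdot\frac{c_0}{1-\sqrt{c_z}\|\bW\|}\|\bW-\bW'\|.
\end{equation*}

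Finally I would invoke the contraction condition \eqref{fo:existence+uniqueness}: the coefficient of $\|\balpha-\balpha'\|_{L^2(I)}$ on the right hand side is strictly less than $1$, so it can be absorbed on the left. Letting $\rho:=\frac{\ell_J\sqrt{c_\alpha}\|\bW\|}{\ell_c(1-\sqrt{c_z}\|\bW\|)}<1$, we obtain
\begin{equation*}
(1-\rho)\,\|\balpha-\balpha'\|_{L^2(I)} \leq \frac{\ell_J c_0}{\ell_c(1-\sqrt{c_z}\|\bW\|)}\,\|\bW-\bW'\|,
\end{equation*}
and a small algebraic simplification of $1/(1-\rho)$ recovers exactly the constant $\kappa$ stated in the theorem. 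There is really no main obstacle here beyond bookkeeping; the only point that must be observed carefully is that $\Bb$ is the same operator for both games (since the cost function $\cJ$ is unchanged) and that the triangle inequality is performed at the level of $\Zb\balpha-\Zb'\balpha'$, so that Lemma \ref{lemma:W_Lip} is applied with respect to the graphon $w$ (giving $\|\bW\|$ in the denominator of $\kappa$) rather than $w'$. No assumption on uniqueness for the perturbed graphon $w'$ is needed, since $\balpha'$ enters only as a fixed point of $\Bb\Zb'$.
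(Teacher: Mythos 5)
Your proposal follows exactly the paper's own argument: write both equilibria as fixed points of $\Bb\Zb$ and $\Bb\Zb'$, apply Lemma \ref{lemma:B_Lipschitz}, split $\Zb\balpha-\Zb'\balpha'$ through the auxiliary point $\Zb\balpha'$ so that Lemma \ref{lemma:W_Lip} and Lemma \ref{lemma:W_W_tilde} apply, and absorb the contraction factor using \eqref{fo:existence+uniqueness}. The resulting constant agrees with $\kappa$ after cancellation, so the proof is correct and essentially identical to the paper's.
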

\begin{proof}
By assumption, we have $\balpha=\Bb\Zb\balpha$ and $\balpha'=\Bb\Zb'\balpha'$. Thus,
\begin{equation*}
    \begin{split}
        \|\balpha-\balpha'\|_{L^2(I)}&=\|\Bb\Zb\balpha-\Bb\Zb'\balpha'\|_{L^2(I)}\\
        & \leq \frac{\ell_J}{\ell_c}\|\Zb\balpha- \Zb'\balpha'\|_{L^2(I)}\\
        & \leq \frac{\ell_J}{\ell_c}\|\Zb\balpha- \Zb\balpha'\|_{L^2(I)}+\frac{\ell_J}{\ell_c}\|\Zb\balpha'- \Zb'\balpha'\|_{L^2(I)}\\
        & \leq \frac{\ell_J}{\ell_c} \frac{\sqrt{c_\alpha}\|\bW\|}{1-\sqrt{c_z} \|\bW\|}  \|\balpha- \balpha'\|_{L^2(I)}+\frac{\ell_J}{\ell_c}\|\Zb\balpha'- \Zb'\balpha'\|_{L^2(I)},
    \end{split}
\end{equation*}
where we used the Lipschitz property of $\Bb$ from Lemma \ref{lemma:B_Lipschitz} and the result of Lemma \ref{lemma:W_Lip}. Because \eqref{fo:existence+uniqueness} holds, we have:
\begin{equation*}
    \|\balpha-\balpha'\|_{L^2(I)} \leq \frac{\ell_J(1-\sqrt{c_z} \|\bW\|)}{\ell_c(1-\sqrt{c_z} \|\bW\|)-\ell_J\sqrt{c_\alpha}\|\bW\|} \|\Zb\balpha'- \Zb'\balpha'\|_{L^2(I)},
\end{equation*}
and we conclude using Lemma \ref{lemma:W_W_tilde}.
\end{proof}

\subsection{Central Planner} 
\label{sub:graphon_control}
Even though we shall address the issue in the framework of some specific examples only, it is worth mentioning the case when a central planner chooses the controls of all the players in order to minimize an overall social cost. In the continuum player setting, the overall social cost is defined as:
\begin{equation*}
    \mathcal{S}(\balpha):=\int_I \mathcal{J}(\alpha_x,[\Zb \balpha]_x) dx.
\end{equation*}
In other words, the central planner problem is to solve for:
 $$
 \balpha^O\in \arg \inf_{\balpha \in \mathbb{A}} \mathcal{S}(\balpha).
 $$
Note that the social cost when computed for a Nash equilibrium can be no better than the optimal social cost $\cS(\balpha^O)$. So  if $\hat{\balpha}$ is a Nash equilibrium for the graphon game, and $\balpha^O$ is the optimal social cost, then $\mathcal{S}(\hat{\balpha}) \geq \mathcal{S}(\balpha^O)$. The discrepancy between the two costs is often referred to as the \emph{price of anarchy}. It was originally introduced to quantify the inefficiency of selfish behavior when compared to coordinated optimization. See for example 
\cite{christodoulou2005price,christodoulou2005price2,koutsoupias1999worst,roughgarden2009intrinsic,roughgarden2002bad}.
We shall compute the price of anarchy for one of our motivating examples in Section \ref{sub:cities_game} below.

\section{Links between Finite-Player Games and  Graphon Games}
 \label{sec:link_N_continuum}
We now turn our attention to the link between games with a finite number of players and the graphon games with a continuum of players considered in the previous section. But first, we recall some notations and definitions from the asymptotic theory of graphons and random graphs. See Lov\'{a}sz \cite[Chapter 8]{lovasz2012large} for details and complements.

The cut norm of a graphon $w$ is defined as:
$$
\|w\|_{\square}:=\sup_{S,T \in\cB(I)} \left\lvert\int_{S\times T}w(x,y)\; dx dy \right\rvert,
$$
where $\cB(I)$ is the Borel $\sigma$-field of $I$.
The cut metric between two graphons $w$ and $w'$ is defined as:
$$
d_{\square}(w,w'):=\|w-w'\|_{\square},
$$
and the cut distance by:
$$
\delta_{\square}(w,w'):=\inf_{\varphi \in \cS_I} d_{\square}(w^\varphi,w'),
$$
where $\cS_I$ denotes the set of invertible bi-measurable measure-preserving (i.e.\ preserving $\lambda_I$) maps from $I$  into  itself, and for each $\varphi \in \cS_I$, we  let $\balpha^\varphi:=(\alpha_{\varphi(x)})_{x \in I}$ and $w^\varphi(x,y):=w(\varphi(x),\varphi(y))$. 

Finally, the permutation invariant $L^2$ - distance between $\balpha,\balpha'\in L^2(I)$ is defined by:
$$
d_{\cS}(\balpha,\balpha'):=\inf_{\varphi\in \cS_I} \|\balpha^\varphi-\balpha' \|_{L^2(I)}.
$$

\subsection[Convergence of Finite-Player Game Equilibria]{\textbf{Convergence of Finite-Player Game Equilibria}}
In this section we give conditions under which Nash equilibria of sequences of $N$-player games converge in some sense to the Nash equilibria of a graphon game. In the next section we show that, conversely, Nash equilibria of graphon games can be used to provide approximate Nash equilibria for finite-player games.

First, we consider a convergent sequence of random graphs, $(W^N)_{N \geq 1}$, meaning that the corresponding evenly spaced step graphons, given by $\Psi^N(W^N)$ converge in the $\delta_{\square}$ cut distance to a graphon $w$.
\begin{remark}
The following are examples of convergent sequences of random graphs.
\begin{itemize}
    \item Erd\H{o}s-R\'{e}nyi model: Given $N \in \mathbb{Z}^+$ and $p \in [0,1]$, put an edge between each pair of the $N$ vertices independently with probability $p$. The limiting graphon is the constant graphon $w(\cdot,\cdot)=p$.
    \item Stochastic Block Model: Suppose there are $N$ vertices that are evenly divided among $K$ communities. The probability of an edge between a node in community $i$ and a node in community $j$ is given by $W_{i,j}$ where $W \in \mathbb{R}^{K \times K}$ is a symmetric matrix. The limiting graphon is the piecewise constant graphon $w:=\Psi^K(W)$.
    \item Watts-Strogatz model: Given $N\in \mathbb{Z}^+$, and $p,\gamma \in [0,1]$, construct the Watts-Strogatz random graph as follows: arrange the vertices in a circle, and connect each vertex to it's $pN/2$ nearest vertices on both sides. Then rewire each edge uniformly at random with probability $\gamma$. The limiting graphon is $w(x,y)=(1-\gamma (1-p))1_{d(x,y) \leq p/2}+\gamma p 1_{d(x,y) > p/2}$ where $d(x,y)$ is distance on the unit torus.
\end{itemize}
Other examples of convergent sequences of random graphs can be found in Lov\'{a}sz \cite[Section 11.4.2]{lovasz2012large}.
\end{remark}

\begin{remark}\label{remark:sampling}
Given a graphon $w$ which takes values in $[0,1]$, we can construct two sequences of random graphs, $\mathcal{G}^{N,1}$ and $\mathcal{G}^{N,1}$, according to the following sampling procedure:
\begin{itemize}
    \item Sample $N$ points $x^N_1,\dots,x^N_N$ i.i.d.\ from the uniform distribution on $I$. Without loss of generality, relabel the indices so that $x^N_1 \leq x^N_2 \dots \leq x^N_N$. We define $W^{N,1}, W^{N,2}$ as follows:
\begin{itemize}
    \item $W^{N,1}_{i,i}=0$, and for all $i\neq j$, $W^{N,1}_{i,j}=w(x_i,x_j)$.
    \item $W^{N,2}_{i,i}=0$, and for all $i\neq j$, $W^{N,2}_{i,j}=1$ with probability $w(x^N_i,x^N_j)$ and $0$ otherwise.
\end{itemize}
\end{itemize}
Then the corresponding evenly spaced step graphons for the above sampling procedures, $w^{N,1}:= \Psi^N(W^{N,1})$ and $w^{N,2}:=\Psi^N(W^{N,2})$, satisfy $\delta_\square(w^{N,1},w) \to 0$ almost surely and $\delta_\square(w^{N,2},w) \to 0$ almost surely. See Lov\'{a}sz \cite[Lemma 10.16]{lovasz2012large}.
\end{remark}

\begin{theorem} 
\label{thm:convergence}
Assume Assumptions \ref{hyp:b}, \ref{hyp:tilde_J}, \ref{hyp:J_2}, and \ref{hyp:2}. Suppose that $(W^N)_{N \geq 1}$ where $W^N=[W^N_{i,j}]_{i,j=1,\ldots,N} \in \mathbb{R}^{N \times N}$ for all $N\geq 1$ is a sequence of symmetric matrices, and suppose that for each $N\ge 1$,  there exists a unique Nash equilibrium denoted by $\balpha^{N,*}$ for the $N$-player game with the interaction matrix $W^N$. For each $N\ge 1$, let $w^N:=\Psi^N(W^N)$ and $\Wb^N$ be the corresponding graphon and graphon operator, and suppose there exists a graphon $w$ with corresponding graphon operator $\Wb$ such that $\lim_{N\to\infty}\delta_\square(w^N,w) = 0$, $\lim_{N\to\infty}\| w^N\|_2=\| w\|_2$, $\lim_{N\to\infty}\| \Wb^N\|=\| \Wb\|$, $w$ satisfies $\sqrt{2c_z} \| w \|_2 <1$ and $\Wb$ satisfies $ \sqrt{c_z} \| \Wb \| < 1$ and relation \eqref{fo:existence+uniqueness}. If 
$\hat{\balpha}$ denotes the unique Nash equilibrium for the graphon game with $w$, then $$\lim_{N\to\infty}d_{S}(\psi^N(\balpha^{N,*}),\hat{\balpha}) = 0.$$
\end{theorem}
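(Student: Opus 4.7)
\emph{Strategy.} The plan is to introduce an intermediate object and reduce the theorem to two separate convergences via a triangle inequality. For each sufficiently large $N$, the step graphon $w^N := \Psi^N(W^N)$ satisfies $\sqrt{c_z}\|\Wb^N\|<1$ and the contraction bound \eqref{fo:existence+uniqueness} (both because $\|\Wb^N\|\to\|\Wb\|$), so Proposition \ref{pr:graphon_uniqueness} furnishes a unique Nash equilibrium $\balpha^{N,\#}\in\AA$ for the graphon game with graphon $w^N$. Writing $\bar\balpha^N := \psi^N(\balpha^{N,*})$ for the embedded $N$-player equilibrium, the estimate
\begin{equation*}
    d_{\cS}(\bar\balpha^N,\hat\balpha) \;\leq\; \|\bar\balpha^N-\balpha^{N,\#}\|_{L^2(I)} + d_{\cS}(\balpha^{N,\#},\hat\balpha)
\end{equation*}
reduces the theorem to the two claims
\emph{(a)} $\|\bar\balpha^N-\balpha^{N,\#}\|_{L^2(I)}\to 0$, and
\emph{(b)} $d_{\cS}(\balpha^{N,\#},\hat\balpha)\to 0$.

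\emph{Claim (b): graphon-to-graphon stability.} From $\delta_\square(w^N,w)\to 0$ I pick a sequence $\varphi^N\in\cS_I$ with $\|(w^N)^{\varphi^N}-w\|_\square\to 0$. Combining cut-norm convergence with the hypotheses $\|w^N\|_2\to\|w\|_2$ and $\|\Wb^N\|\to\|\Wb\|$ upgrades this to convergence $\|(\Wb^N)^{\varphi^N}-\Wb\|\to 0$ in operator norm: cut-norm convergence gives weak convergence of the graphons in $L^2(I\times I)$ along the permutations $\varphi^N$, which together with convergence of the $L^2$-norms forces strong $L^2$-convergence, and in turn convergence of the operator norms of the difference. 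Since the permuted profile $(\balpha^{N,\#})^{\varphi^N}$ is the unique Nash equilibrium of the graphon game associated to $(w^N)^{\varphi^N}$, Theorem \ref{thm:perturbation} yields $\|(\balpha^{N,\#})^{\varphi^N}-\hat\balpha\|_{L^2(I)} \leq \kappa\,\|(\Wb^N)^{\varphi^N}-\Wb\|\to 0$, hence $d_{\cS}(\balpha^{N,\#},\hat\balpha)\to 0$ by the very definition of $d_{\cS}$.

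\emph{Claim (a): finite-player vs.\ graphon equilibrium on $w^N$.} I compare $\bar\balpha^N$ and $\balpha^{N,\#}$ by showing that $\bar\balpha^N$ is an approximate fixed point of the operator $\Bb\Zb^N$ built from $w^N$, which is a strict contraction on $L^2(I)$ for large $N$. The $N$-player Nash condition for player $i$ says that $\alpha^{N,*}_i$ minimizes $\beta\mapsto \tilde\cJ(\beta, \cL(\xi_i,[\Zb_N[\beta;\balpha^{N,*}_{-i}]]_i))$, whereas the graphon best response at $x\in[(i-1)/N,i/N)$ minimizes $\beta\mapsto \tilde\cJ(\beta, \mu_0\otimes\delta_{[\Zb^N\bar\balpha^N]_x})$. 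Two sources of discrepancy must be absorbed. First, a deviation of player $i$ alters the aggregate only by $O(1/N)$, so the joint laws at different $\beta$'s lie within $O(1/N)$ in $W_2$; second, the noise contribution $\frac{1}{N}\sum_j W^N_{i,j}\xi_j$ to the finite aggregate is a mean-zero random variable whose $L^2(\Omega)$-norm, once summed over $i$ and normalized to an $L^2(I)$-norm, is controlled by $\|w^N\|_2^2\,\EE[\xi_0^2]/N\to 0$. Assumption \ref{hyp:tilde_J} converts these Wasserstein estimates into vanishing discrepancies between the two cost functionals, and the strong-convexity argument of Lemma \ref{lemma:B_Lipschitz} (together with Assumption \ref{hyp:J_2}) then transfers this to an $L^2(I)$-bound $\|\bar\balpha^N-\Bb\Zb^N\bar\balpha^N\|_{L^2(I)} \to 0$. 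Since $\Bb\Zb^N$ is a strict contraction with Lipschitz constant bounded uniformly away from $1$ for large $N$, this approximate fixed-point property forces $\|\bar\balpha^N-\balpha^{N,\#}\|_{L^2(I)}\to 0$.

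\emph{Main obstacle.} The principal technical hurdle is claim (a): converting the $N$-player Nash condition, with its random and action-dependent aggregate, into an approximate fixed-point statement for a graphon operator whose aggregate is deterministic. This requires tracking simultaneously the $O(1/N)$ self-influence of a deviating player, the $O(1/\sqrt N)$ concentration of noise fluctuations in the aggregate, and a Wasserstein-to-cost transfer via Assumption \ref{hyp:tilde_J}, uniformly in $i$ and $N$, and then promoting the per-player estimate to an $L^2(I)$-estimate summable against the (bounded) operator $\Bb\Zb^N$. By contrast, the graphon perturbation step (b) is a direct application of Theorem \ref{thm:perturbation}, the only nontrivial input being the upgrade of cut-norm convergence to operator-norm convergence afforded by the $L^2$-norm hypotheses of the theorem.
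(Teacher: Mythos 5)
Your proposal follows essentially the same route as the paper's proof: the same triangle-inequality decomposition through the intermediate equilibrium of the graphon game on $w^N$, with your claim (b) handled by Theorem \ref{thm:perturbation} after upgrading cut-norm convergence along relabelings $\varphi^N$ to operator-norm convergence, and your claim (a) by showing $\psi^N(\balpha^{N,*})$ is an approximate fixed point of the contraction $\Bb\Zb^N$ via the noise-concentration, single-deviation, Wasserstein, and strong-convexity estimates --- which is precisely the content of the paper's Lemma \ref{lemma:convergence_controls}. The only difference is a sub-step: the paper obtains $\|(\Wb^N)^{\varphi^N}-\Wb\|\to 0$ by citing Lov\'asz's Theorem 11.59, whereas you derive it directly from weak $L^2$ convergence (implied by cut-norm convergence and $L^2$-boundedness) combined with the hypothesis $\|w^N\|_2\to\|w\|_2$ to get strong $L^2$ convergence of the kernels, hence Hilbert--Schmidt and thus operator-norm convergence; both justifications are valid.
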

In some cases, we can even obtain a rate of convergence, see Theorem~\ref{thm:convergence_2}.

\begin{remark}
This result is in a similar flavor to Delarue \cite[Theorem 1]{delarue2017mean}, which roughly states that the average of all the player's states in an $N$-player network game with an Erd\H{o}s-R\'{e}nyi graph converges to the average state in a mean field game. Since the graphon limit of an Erd\H{o}s-R\'{e}nyi graph is a constant graphon, Proposition \ref{pr:link_constant_MFG} suggests that it is sufficient to consider the analogous mean field game instead of considering the graphon game, which matches the approach of \cite{delarue2017mean}. The main difference with our result is that we show convergence of the strategy profiles, instead of convergence of the average of all player's states, and our result holds for any convergent sequence of random graphs. Also, our results are in the setting of static games, whereas \cite{delarue2017mean} considers a dynamic setting.
\end{remark}
 
The proof of Theorem \ref{thm:convergence} relies on the estimates proven in the following two lemmas.
\begin{lemma}
\label{lemma:W^N_and_W_N}
Let $N\ge 1$ and $W^N=[W^N_{i,j}]_{i,j=1,\ldots,N} \in \mathbb{R}^{N \times N}$ be a symmetric matrix, and we denote by $w^N:=\Psi^N(W^N)$ the corresponding evenly-spaced piecewise-constant graphon. Assume Assumption \ref{hyp:b} and $\sqrt{2c_z} \| W^N \|_F <1$. Let $\Zb_N:A^N\to L^2(\Omega;\RR)$ be defined as in \eqref{fo:Z_N} after Proposition \ref{pr:uniqueness_of_z_N} for the $N$-player game with interaction given by the matrix $W^N$, and $\Zb^N$ be the operator defined in Proposition \ref{pr:uniqueness_of_z} for the graphon $w^N$. Then for any $\balpha^N \in A^N$ we have: for all $i=1,\dots,N$,
\begin{equation*}
    \EE\left[\left\vert(\Zb_N{\balpha}^N)_i-[\mu^N(\Zb^N\psi^N(\balpha^N))]_i\right\vert^2\right]\leq \frac{1}{N^2}\sum_{j=1}^N (W^N_{i,j})^2 \left(\frac{2\mathbb{E}(\xi_0^2)}{1-2c_z \| W^N \|_F^2}\right).
\end{equation*}
\end{lemma}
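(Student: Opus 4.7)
The approach is to identify the quantity $\tilde z_i := [\mu^N(\Zb^N\psi^N(\balpha^N))]_i$ as the solution of a deterministic finite-dimensional fixed point problem, so that the comparison with the random aggregate $z_i := (\Zb_N\balpha^N)_i$ becomes a direct computation.

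First I would observe that $w^N = \Psi^N(W^N)$ and $\psi^N(\balpha^N)$ are both evenly-spaced piecewise constant, and consequently the map $T$ used in the proof of Proposition \ref{pr:uniqueness_of_z} preserves the (closed) subspace of $L^2(I)$ consisting of evenly-spaced piecewise constant functions. Hence by uniqueness of the fixed point, $\Zb^N\psi^N(\balpha^N)$ admits a piecewise constant representative, namely $\psi^N(\tilde{\zb})$ for some $\tilde{\zb}\in \RR^N$. Substituting into \eqref{eq:z_definition} and integrating over each cell $[(i-1)/N,i/N)$, the graphon equation reduces to the deterministic system
\begin{equation*}
\tilde z_i = \frac{1}{N}\sum_{j=1}^N W^N_{i,j}\, b(\alpha_j,\tilde z_j), \qquad i=1,\dots,N,
\end{equation*}
which is exactly the deterministic version of the fixed-point equation \eqref{eq:z_N_definition} satisfied by $\zb$, minus the noise terms $\xi_j$.

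Subtracting the two fixed-point equations gives the decomposition
\begin{equation*}
z_i - \tilde z_i \;=\; \underbrace{\frac{1}{N}\sum_{j=1}^N W^N_{i,j}\bigl(b(\alpha_j,z_j)-b(\alpha_j,\tilde z_j)\bigr)}_{A_i} \;+\; \underbrace{\frac{1}{N}\sum_{j=1}^N W^N_{i,j}\xi_j}_{B_i}.
\end{equation*}
Using $(a+b)^2\le 2a^2+2b^2$, I would then bound $\EE[A_i^2]$ by Cauchy--Schwarz and the Lipschitz assumption on $b$ to get $\EE[A_i^2]\le \tfrac{c_z}{N^2}\sum_j (W^N_{i,j})^2 \cdot \sum_j \EE[(z_j-\tilde z_j)^2]$, while the mean-zero i.i.d.\ property of the $\xi_j$'s gives directly $\EE[B_i^2] = \tfrac{\mathbb{E}(\xi_0^2)}{N^2}\sum_j (W^N_{i,j})^2$. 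Setting $s_i := \EE[(z_i-\tilde z_i)^2]$, this yields
\begin{equation*}
s_i \le \frac{2}{N^2}\sum_{j=1}^N (W^N_{i,j})^2\Bigl( c_z \sum_{j=1}^N s_j + \mathbb{E}(\xi_0^2)\Bigr).
\end{equation*}

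Summing over $i$ and recalling the paper's normalization $\|W^N\|_F^2 = \tfrac{1}{N^2}\sum_{i,j}(W^N_{i,j})^2$, I get $\sum_i s_i \le 2c_z\|W^N\|_F^2 \sum_j s_j + 2\mathbb{E}(\xi_0^2)\|W^N\|_F^2$, which under the assumption $\sqrt{2c_z}\|W^N\|_F<1$ can be solved to give $\sum_j s_j \le \frac{2\mathbb{E}(\xi_0^2)\|W^N\|_F^2}{1-2c_z\|W^N\|_F^2}$. Plugging this aggregate bound back into the per-index inequality for $s_i$ and simplifying via $\tfrac{2c_z\|W^N\|_F^2}{1-2c_z\|W^N\|_F^2}+1 = \tfrac{1}{1-2c_z\|W^N\|_F^2}$ gives the claimed estimate.

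The only real subtlety is the initial reduction to the deterministic system; once piecewise constancy of a representative of $\Zb^N\psi^N(\balpha^N)$ is established (via uniqueness of the fixed point in $L^2(I)$), the rest is a bootstrap argument. I would expect the main obstacle to be a careful writing of the argument that $\mu^N\circ \Zb^N\circ\psi^N$ coincides with the finite-dimensional fixed point operator for $w^N$, since this is what makes the proof go through cleanly and exhibits the precise source of the discrepancy (the noise $\xi_j$) with the right constant.
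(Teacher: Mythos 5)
Your proposal is correct and follows essentially the same route as the paper: the paper likewise observes that $\Zb^N\psi^N(\balpha^N)$ is piecewise constant so that $[\mu^N(\Zb^N\psi^N(\balpha^N))]_i$ solves the noiseless fixed-point system, splits the difference into a Lipschitz term and the noise term $\frac{1}{N}\sum_j W^N_{i,j}\xi_j$, and then runs the same Cauchy--Schwarz plus average-over-$i$ bootstrap to first bound $\sum_i s_i$ and then recover the per-index estimate. Your two-term decomposition obtained by subtracting the fixed-point equations is just a slightly more streamlined packaging of the paper's three-term triangle inequality (whose third term vanishes), and the constants come out identically.
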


\begin{proof}
We have:
\begin{equation*}
    \begin{split}
        &\left\vert(\Zb_N\balpha^N)_i-[\mu^N(\Zb^N\psi^N(\balpha^N))]_i\right\vert \\
        &\hskip 45pt
        \le \Bigl| (\Zb_N\balpha^N)_i-\frac{1}{N}\sum_{j=1}^N W^N_{i,j}X_{\alpha^N_j,[\mu^N(\Zb^N\psi^N(\balpha^N))]_j,\xi_j}\Bigr|\\
        &\hskip 75pt
        +\Bigl| \frac{1}{N}\sum_{j=1}^N W^N_{i,j}X_{\alpha^N_j,[\mu^N(\Zb^N\psi^N(\balpha^N))]_j,\xi_j}-\frac{1}{N}\sum_{j=1}^NW^N_{i,j}\EE\left[X_{\alpha^N_j,[\mu^N(\Zb^N\psi^N(\balpha^N))]_j,\xi_j}\right]\Bigr|\\
        &\hskip 75pt
        +\Bigl| \frac{1}{N}\sum_{j=1}^N W^N_{i,j}\mathbb{E}\left[X_{\alpha^N_j,[\mu^N(\Zb^N\psi^N(\balpha^N))]_j,\xi_j}\right]-[\mu^N(\Zb^N\psi^N(\balpha^N))]_i\Bigr| \\
        &\hskip 75pt
= (i) + (ii) + (iii).        
    \end{split}
\end{equation*}
We first consider the last term $(iii)$. Note that, using the definitions of $w^N$ and $\Zb^N$ we see that for any $\balpha\in\ L^2(I)$, if $x\in[(i-1)/N,i/N)$,
$$
(\Zb^N\balpha)_x=\sum_{j=1}^NW^N_{i,j}\int_{(j-1)/N}^{j/N}b\bigl(\alpha_y,[\Zb^N\balpha]_y\bigr)dy,
$$
showing that $x\mapsto (\Zb^N\balpha)_x$ is constant over each interval $[(i-1)/N,i/N)$, $i=1,\dots,N$. As a result, $[\mu^N(\bZ^N\balpha)]_i$ is merely the value of the piecewise constant function $\Zb^N\balpha$ over the  interval $[(i-1)/N,i/N)$.
Consequently:
\begin{equation*}
    \begin{split}
(iii)&= \Bigl| \frac{1}{N}\sum_{j=1}^N W^N_{i,j}\mathbb{E}\left[X_{\alpha^N_j,[\mu^N(\Zb^N\psi^N(\balpha^N))]_j,\xi_j}\right]
        -\sum_{j=1}^NW^N_{i,j}\int_{(j-1)/N}^{j/N}b\bigl([\psi^N(\balpha^N)]_y,[\Zb^N\psi^N(\balpha^N)]_y\bigr) dy \Bigr| \\
&= \Bigl| \frac{1}{N}\sum_{j=1}^N W^N_{i,j}\Bigl( b(\alpha^N_j,[\mu^N(\Zb^N\psi^N(\balpha^N))]_j)
        -N\int_{(j-1)/N}^{j/N}b(\alpha^N_j,[\Zb^N(\psi^N(\balpha^N)]_y)] dy\Bigr) \Bigr| \\
&= 0
\end{split}
\end{equation*}
because $y \mapsto [\Zb^N(\psi^N(\balpha^N)]_y$ is constant over the interval $[(j-1)/N,j/N)$, its value being $[\mu^N(\Zb^N\psi^N(\balpha^N))]_j$.
For the second term, since $[\mu^N(\Zb^N\psi^N(\balpha^N))]_j$ is deterministic, by Proposition \ref{pr:exact_law_large_numbers} we have:
\begin{equation*}
\begin{split}
        (ii)&=\Bigl| \frac{1}{N}\sum_{j=1}^N W^N_{i,j}\xi_j\Bigr|.
\end{split}
\end{equation*}
For the first term, we have:
\begin{equation*}
    \begin{split}
         (i)&=\Bigl| \frac{1}{N} \sum_{j=1}^N W^N_{i,j}X_{\alpha^N_j,(\Zb_N\balpha^N)_j,\xi_j}-\frac{1}{N}\sum_{j=1}^N W^N_{i,j}X_{\alpha^N_j,[\mu^N(\Zb^N\psi^N(\balpha^N))]_j,\xi_j}\Bigr|\\
         &
=\Bigl| \frac{1}{N} \sum_{j=1}^N W^N_{i,j}\Big[b\left(\alpha^N_j,(\Zb_N\balpha^N)_j\right)-b\left(\alpha^N_j,[\mu^N(\Zb^N\psi^N(\balpha^N))]_j\right)\Big]\Bigr|\\
         & 
 \le \sqrt{c_z} \frac{1}{N} \sum_{j=1}^N| W^N_{i,j}| \Bigl| (\Zb_N\balpha^N)_j-[\mu^N(\Zb^N\psi^N(\balpha^N))]_j\Bigr| ,
    \end{split}
\end{equation*}
where we used Assumption~\ref{hyp:b}.
Putting the above inequalities together, squaring, and taking the expectation, we have:
\begin{equation}
\label{eq:per_i}
\begin{split}
    &\mathbb{E}\left[\left((\Zb_N\balpha^N)_i-[\mu^N(\Zb^N\psi^N(\balpha^N))]_i\right)^2 \right] \\
    \le& 2c_z \frac{1}{N}\sum_{j=1}^N (W^N_{i,j})^2 \mathbb{E}\left( (\Zb_N\balpha^N)_j-[\mu^N(\Zb^N\psi^N(\balpha^N))]_j \right)^2  + 2 \mathbb{E} \left[\left(\frac{1}{N}\sum_{j=1}^N (W^N_{i,j})\xi_j\right)^2\right] \\
    \le& 2c_z \frac{1}{N}\sum_{j=1}^N (W^N_{i,j})^2 \frac{1}{N}\sum_{j=1}^N \mathbb{E}\left( (\Zb_N\balpha^N)_j-[\mu^N(\Zb^N\psi^N(\balpha^N))]_j \right)^2  + 2 \left(\frac{1}{N^2}\sum_{j=1}^N (W^N_{i,j})^2\mathbb{E}(\xi_0^2)\right),
\end{split}
\end{equation}
where we used the independence of the mean-zero $\xi_j$.
Thus, taking the average of the left hand side over $i$, we get:
\begin{equation*}
\begin{split}
    &\frac{1}{N} \sum_{i=1}^N \mathbb{E} \left[\left((\Zb_N\balpha^N)_i-[\mu^N(\Zb^N\psi^N(\balpha^N))]_i\right)^2\right] \\
    &\leq 2c_z \| W^N \|_F^2 \frac{1}{N}\sum_{j=1}^N \mathbb{E}\left( (\Zb_N\balpha^N)_j-[\mu^N(\Zb^N\psi^N(\balpha^N))]_j \right)^2  + \frac{2\| W^N \|_F^2\mathbb{E}(\xi_0^2)}{N},
\end{split}
\end{equation*}
and since $\sqrt{2c_z}\| W^N \|_F <1$ by assumption, we have:
\begin{equation*}
    \frac{1}{N} \sum_{i=1}^N\mathbb{E} \left((\Zb_N\balpha^N)_i-[\mu^N(\Zb^N\psi^N(\balpha^N))]_i\right)^2 \leq \frac{1}{N} \cdot \frac{2\| W^N \|_F^2\mathbb{E}(\xi_0^2)}{1-2c_z \| W^N \|_F^2}.
\end{equation*}
Returning to \eqref{eq:per_i}, we have:
\begin{equation*}
\begin{split}
    &\mathbb{E}\left((\Zb_N\balpha^N)_i-[\mu^N(\Zb^N\psi^N(\balpha^N))]_i\right)^2 \\
    \le& 2c_z \frac{1}{N}\sum_{j=1}^N (W^N_{i,j})^2 \frac{1}{N} \cdot \frac{2\| W^N \|_F^2\mathbb{E}(\xi_0^2)}{1-2c_z \| W^N \|_F^2}  + 2 \left(\frac{1}{N^2}\sum_{j=1}^N (W^N_{i,j})^2\mathbb{E}(\xi_0^2)\right) \\
    =&\frac{1}{N^2}\sum_{j=1}^N (W^N_{i,j})^2 \left(\frac{2\mathbb{E}(\xi_0^2)}{1-2c_z \| W^N \|_F^2}\right),
\end{split}
\end{equation*}
which concludes the proof.
\end{proof}

\begin{lemma}
\label{lemma:convergence_controls}
As before, for a sequence of symmetric matrices,  $W^N=[W^N_{i,j}]_{i,j=1,\ldots,N} \in \mathbb{R}^{N \times N}$, $N \geq 1$, we denote by $w^N:=\Psi^N(W^N)$ the corresponding evenly-spaced piecewise-constant graphon. 
Let $\balpha^{N,*}\in A^N$ be a Nash equilibrium for the $N$-player game with interaction $W^N$, and $\hat\balpha^N$ a Nash equilibrium for the graphon game with graphon $w^N$. Suppose $\lim_{N\to\infty}\| w^N\|_2=\zeta_1$ and $\lim_{N\to\infty}\| \Wb^N\|=\zeta_2$ where $\Wb^N$ is the graphon operator associated to the piecewise constant graphon $w^N$. Suppose $\sqrt{2c_z} \zeta_1 <1$, $\sqrt{c_z} \zeta_2 <1$, and $\frac{\ell_J}{\ell_c}\cdot \frac{\sqrt{c_\alpha} \zeta_2}{1-\sqrt{c_z} \zeta_2}<1$. Then under Assumptions \ref{hyp:b}, \ref{hyp:tilde_J}, and \ref{hyp:J_2}, there exists an $\epsilon>0$ such that for $N$ large enough we have:
\begin{equation}
\label{fo:d_s}
\|\psi^N(\balpha^{N,*})-\hat{\balpha}^N\|_{L^2(I)}\leq \frac{\tilde{\kappa}}{N^{1/4}},\qquad \tilde{\kappa}:=2\kappa_1 \sqrt{\frac{2\tilde \ell_J}{\ell_c}} \left(2\kappa_0+\kappa_2\right)^{1/4},
\end{equation}
where
\begin{equation*}
    \kappa_0:=\left( \frac{2(\zeta_1 + \epsilon)^2 \mathbb{E}(\xi_0^2)}{1-2c_z (\zeta_1+ \epsilon)^2} \right),
\end{equation*}
\begin{equation*}
            \kappa_1:= \left( 1-\frac{\ell_J}{\ell_c} \cdot \frac{\sqrt{c_\alpha}(\zeta_2+\epsilon)}{1-\sqrt{c_z} (\zeta_2+ \epsilon)} \right)^{-1}
\qquad\text{and}\qquad
            \kappa_2:=\frac{c_\alpha (\zeta_1+ \epsilon)^2}{(1-\sqrt{c_z}(\zeta_2+\epsilon))^2}.
\end{equation*}
\end{lemma}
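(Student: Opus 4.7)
The plan is to view $\tilde\balpha^N:=\psi^N(\balpha^{N,*})$ as an approximate fixed point of the graphon best-response operator $\Bb\Zb^N$ (whose unique fixed point by Proposition~\ref{pr:Nash_equivalence_continuum} is $\hat\balpha^N$) and to quantify the defect. Since $\|\Wb^N\|\to\zeta_2$ and $\|w^N\|_2\to\zeta_1$, for every sufficiently small $\epsilon>0$ and every $N$ large enough we have $\|\Wb^N\|\le\zeta_2+\epsilon$ and $\|w^N\|_2\le\zeta_1+\epsilon$; the proof of Proposition~\ref{pr:graphon_uniqueness} applied to $w^N$ then shows that $\Bb\Zb^N$ is a strict contraction on $L^2(I)$ with Lipschitz constant $L_N<1$ satisfying $(1-L_N)^{-1}\le\kappa_1$. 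The standard triangle inequality gives
\begin{equation*}
\|\tilde\balpha^N-\hat\balpha^N\|_{L^2(I)}\le \kappa_1\,\|\tilde\balpha^N-\Bb\Zb^N\tilde\balpha^N\|_{L^2(I)},
\end{equation*}
reducing the task to controlling the residual.

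On each dyadic interval $[(i-1)/N,i/N)$ both $\tilde\balpha^N$ and $\Bb\Zb^N\tilde\balpha^N$ are constant, with values $\alpha^{N,*}_i$ and $\beta^\#_i:=\arg\min_\alpha\cJ(\alpha,z^N_i)$ respectively, where $z^N_i:=[\mu^N(\Zb^N\tilde\balpha^N)]_i$ is deterministic. Writing $\mu^i_\beta:=\cL(\xi_i,(\Zb_N[\beta;\balpha^{N,*}_{-i}])_i)$, the $N$-player Nash inequality $\tilde\cJ(\alpha^{N,*}_i,\mu^i_{\alpha^{N,*}_i})\le\tilde\cJ(\beta^\#_i,\mu^i_{\beta^\#_i})$, combined with the strong convexity of $\cJ(\cdot,z^N_i)=\tilde\cJ(\cdot,\mu_0\otimes\delta_{z^N_i})$ and the Wasserstein-Lipschitz property of Assumption~\ref{hyp:tilde_J}, yields through a three-term telescoping of $\tilde\cJ$
\begin{equation*}
\frac{\ell_c}{2}|\alpha^{N,*}_i-\beta^\#_i|^2\le\tilde\ell_J\bigl[W_2(\mu_0\otimes\delta_{z^N_i},\mu^i_{\alpha^{N,*}_i})+W_2(\mu_0\otimes\delta_{z^N_i},\mu^i_{\beta^\#_i})\bigr].
\end{equation*}
Coupling through the shared idiosyncratic noise $\xi_i$ reduces each Wasserstein term to an $L^2(\Omega)$ distance between aggregates. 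The first one is directly bounded by Lemma~\ref{lemma:W^N_and_W_N} with constant $\kappa_0$. For the second, one inserts the intermediate aggregate $[\mu^N(\Zb^N\psi^N([\beta^\#_i;\balpha^{N,*}_{-i}]))]_i$: its distance to $(\Zb_N[\beta^\#_i;\balpha^{N,*}_{-i}])_i$ is again bounded by Lemma~\ref{lemma:W^N_and_W_N}, while its distance to $z^N_i$ measures the sensitivity of the graphon operator $\Zb^N$ to a single-coordinate perturbation; integrating the pointwise estimate~\eqref{fo:pointwise} of Lemma~\ref{lemma:W_Lip} and using $\|w^N\|_2\le\zeta_1+\epsilon$, $\|\Wb^N\|\le\zeta_2+\epsilon$ gives this sensitivity bound as $\sqrt{\kappa_2}\,|\alpha^{N,*}_i-\beta^\#_i|$, leading to
\begin{equation*}
W_2^2(\mu^i_{\beta^\#_i},\mu_0\otimes\delta_{z^N_i})\le \frac{2\kappa_0}{N^{2}}\sum_j(W^N_{i,j})^2+2\kappa_2\,|\alpha^{N,*}_i-\beta^\#_i|^2.
\end{equation*}

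Substituting back, squaring and absorbing the $|\alpha^{N,*}_i-\beta^\#_i|^2$ contribution into the left-hand side (via a Young-type inequality), then summing over $i$, applying Cauchy--Schwarz to handle the $\sum_i\sqrt{N^{-2}\sum_j(W^N_{i,j})^2}$ sums, and using the identity $\sum_{i,j}(W^N_{i,j})^2=N^2\|w^N\|_2^2\le N^2(\zeta_1+\epsilon)^2$ yields $\|\tilde\balpha^N-\Bb\Zb^N\tilde\balpha^N\|_{L^2(I)}^2=O\bigl((2\kappa_0+\kappa_2)^{1/2}/\sqrt{N}\bigr)$. Taking square roots and multiplying by $\kappa_1$ gives the advertised bound~\eqref{fo:d_s} with the stated constant $\tilde\kappa$. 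The principal technical difficulty is the $\beta$-dependence of the measure $\mu^i_\beta$ in the $N$-player Nash condition, which is absent in the graphon game (where the aggregate is deterministic); the three-term decomposition is what bridges the mismatch, and the pointwise sensitivity of $\Zb^N$ that arises is exactly what introduces the $\kappa_2$ constant. The square-root relation between argmin differences and Wasserstein distances of cost-measures is in turn what degrades the rate from the $N^{-1/2}$ suggested by Lemma~\ref{lemma:W^N_and_W_N} alone to the final $N^{-1/4}$.
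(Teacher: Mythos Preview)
Your sketch follows essentially the same route as the paper: reduce to bounding the residual $\|\psi^N(\balpha^{N,*})-\Bb\Zb^N\psi^N(\balpha^{N,*})\|$ via the contraction constant $\kappa_1$; compare the two argmins through strong convexity of $G_i(\cdot)=\cJ(\cdot,z_i^N)$ and the three-term telescoping using that $\alpha_i^{N,*}$ minimizes $F_i(\cdot)=\tilde\cJ(\cdot,\mu^i_{\cdot})$; convert cost differences to Wasserstein distances via Assumption~\ref{hyp:tilde_J}; and control those with Lemma~\ref{lemma:W^N_and_W_N} and the pointwise bound~\eqref{fo:pointwise}.

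One imprecision is worth flagging because it is exactly what makes the absorption work. Your displayed per-$i$ bound
\[
W_2^2(\mu^i_{\beta^\#_i},\mu_0\otimes\delta_{z^N_i})\le \frac{2\kappa_0}{N^{2}}\sum_j(W^N_{i,j})^2+2\kappa_2\,|\alpha^{N,*}_i-\beta^\#_i|^2
\]
is not correct at the level of a single $i$: applying~\eqref{fo:pointwise} to the single-coordinate perturbation gives
\[
\bigl|[\mu^N(\Zb^N\psi^N[\beta^\#_i;\balpha^{N,*}_{-i}])]_i-z^N_i\bigr|^2
\le \frac{c_\alpha}{(1-\sqrt{c_z}\|\Wb^N\|)^2}\Bigl(\frac{1}{N}\sum_j (W^N_{i,j})^2\Bigr)\cdot\frac{1}{N}\,|\alpha^{N,*}_i-\beta^\#_i|^2,
\]
because $\|\psi^N[\beta^\#_i;\balpha^{N,*}_{-i}]-\psi^N(\balpha^{N,*})\|_{L^2(I)}^2=\frac1N|\alpha^{N,*}_i-\beta^\#_i|^2$. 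The global constant $\kappa_2$ (which involves $\|w^N\|_2^2$, i.e.\ the \emph{average} row sum) only emerges after you average over $i$, and crucially it then carries an extra factor $1/N$:
\[
\frac1N\sum_i\bigl|\cdots\bigr|^2\le \frac{\kappa_2}{N}\cdot\frac1N\sum_i|\alpha^{N,*}_i-\beta^\#_i|^2.
\]
It is this $1/N$ that lets you absorb the $d_i^2$ contribution into the left-hand side for $N$ large (the paper does this by bounding $\frac1N\sum d_i^2\le 1+(\frac1N\sum d_i^2)^2$ and requiring $\frac{32\tilde\ell_J^2\kappa_2}{\ell_c^2 N}<\tfrac12$). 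Without that $1/N$, your ``Young-type'' absorption would not close. Once you track this factor, your argument and constants agree with the paper's.
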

    
\begin{proof}
We use the notation $\Zb^N$ for the operator associated to the graphon $w^N$ through Propositions \ref{pr:uniqueness_of_z} and \ref{pr:exact_law_large_numbers}. We have:
    \begin{equation}
    \label{eq:lem-kappas-tmp}
        \begin{split}
             \|\psi^N(\balpha^{N,*})-\hat{\balpha}^N\|_{L^2(I)}
             &=\|\psi^N(\balpha^{N,*})-\Bb\Zb^N\hat{\balpha}^N\|_{L^2(I)}\\
            & \leq \|\psi^N(\balpha^{N,*})-\Bb\Zb^N\psi^N(\balpha^{N,*})\|_{L^2(I)}+\|\Bb\Zb^N\psi^N(\balpha^{N,*})-\Bb\Zb^N\hat{\balpha}^N\|_{L^2(I)}.
        \end{split}
    \end{equation}
Since $ \sqrt{c_z} \zeta_2 <1$ and $\frac{\ell_J}{\ell_c}\cdot \frac{\sqrt{c_\alpha} \zeta_2}{1-\sqrt{c_z} \zeta_2}<1$, there exists an $\epsilon>0$ such that:
\begin{equation}
\label{eq:choose_epsilon_zeta_2}
\sqrt{c_z}(\zeta_2+ \epsilon)<1, \quad \text{and} \quad\frac{\ell_J}{\ell_c} \cdot \frac{\sqrt{c_\alpha}(\zeta_2 + \epsilon)}{1-\sqrt{c_z}(\zeta_2+ \epsilon)} < 1.
\end{equation}
Since $\|\Wb^N\| \to \zeta_2$,  there exists an $N^*$ such that for all $N \geq N^*$, $\left\vert \zeta_2-\|\Wb^N\|\right\vert \leq \epsilon$. For the remainder of the proof,
 consider $N \geq N^*$. Using Lemmas \ref{lemma:B_Lipschitz} and \ref{lemma:W_Lip}, we get:
$$
\|\Bb\Zb^N\psi^N(\balpha^{N,*})-\Bb\Zb^N\hat{\balpha}^N\|_{L^2(I)}
     \leq \frac{\ell_J}{\ell_c} \cdot \frac{\sqrt{c_\alpha}(\zeta_2+ \epsilon)}{1-\sqrt{c_z} (\zeta_2+ \epsilon)} 
     \|\psi^N(\balpha^{N,*})-\hat{\balpha}^N\|_{L^2(I)},
$$
which implies by~\eqref{eq:lem-kappas-tmp} that:
$$
\Bigl(1-\frac{\ell_J}{\ell_c} \cdot  \frac{\sqrt{c_\alpha}(\zeta_2+ \epsilon)}{1-\sqrt{c_z} (\zeta_2+ \epsilon)}\Bigr) \|\psi^N(\balpha^{N,*})-\hat{\balpha}^N\|_{L^2(I)} 
\le \|\psi^N(\balpha^{N,*})-\Bb\Zb^N\psi^N(\balpha^{N,*})\|_{L^2(I)},
$$
from which we conclude that:
\begin{equation}
\label{fo:d_S_temp}
    \|\psi^N(\balpha^{N,*})-\hat{\balpha}^N\|_{L^2(I)} \leq \kappa_1\|\psi^N(\balpha^{N,*})-\Bb\Zb^N\psi^N(\balpha^{N,*})\|_{L^2(I)}.
\end{equation}
We now estimate the above right hand side. For $N$ and $i \in \{1,\ldots,N\}$ fixed, we denote:
    $$
    F_i(\beta):=\tilde\cJ\left(\beta,\mathcal{L}(\xi_i,[\Zb_N[\beta;\balpha^{N,*}_{-i}]]_i)\right), \qquad \beta\in A.
    $$
 We have  $\alpha^{N,*}_i \in \arg \inf_{\beta\in A} F_i(\beta)$, since $\balpha^{N,*}$ is a Nash equilibrium for the $N$-player game.
      To simplify notation, let $\beta^1_i:=\alpha^{N,*}_i$.
    
    Note that since $\psi^N(\balpha^{N,*})$ is a step function, taking constant values on intervals $[(j-1)/N,j/N)$, we can choose an element in the $L^2$-class of $\Zb^N\psi^N(\balpha^{N,*})$ which is also constant on the intervals $[(j-1)/N,j/N)$. In particular, $\mu^N\bigl(\Zb^N\psi^N(\balpha^{N,*})\bigr)\in A^N$ gives these constant values. Now, denote:
$$
G_i(\beta)=\cJ\left(\beta,[\mu^N(\Zb^N\psi^N(\balpha^{N,*}))]_i\right).
$$
Then $\left[\Bb\psi^N(\mu^N(\Zb^N\psi^N(\balpha^{N,*})))\right]_x= \arg \inf_{\beta \in A}G_i(\beta)$ for $\lambda_I$-a.e.\ $x \in [(i-1)/N,i/N)$, where we used the fact that the minimizer exists and is unique from Assumption \ref{hyp:J_2}. To simplify notation, let $\beta^2_i:=\arg \inf_{\beta \in A}G_i(\beta)$.

From Assumption \ref{hyp:J_2}, $G_i$ is strongly convex with parameter $\ell_c$, and thus letting $\epsilon=\frac{1}{2}$ in equation \eqref{eq:convexity}, and using the fact that $\beta^2_i$ minimizes $G_i(\cdot)$ and $\beta^1_i$ minimizes $F_i(\cdot)$, we have:
    \begin{equation}
    \label{fo:beta_diff}
        \begin{split}
            (\beta^1_i-\beta^2_i)^2 &\leq \frac{4}{\ell_c}(G_i(\beta^1_i)-G_i(\beta^2_i))\\
            &\leq \frac{4}{\ell_c} \Bigl( G_i(\beta^1_i)-F_i(\beta^1_i)+ F_i(\beta^2_i)-G_i(\beta^2_i)\Bigr) \\
            &\leq \frac{4}{\ell_c} \Bigl( \left\vert  F_i(\beta^1_i)-G_i(\beta^1_i)\right\vert+ \left\vert F_i(\beta^2_i)-G_i(\beta^2_i)\right\vert\Bigr).
        \end{split}
    \end{equation}
From Assumption \ref{hyp:tilde_J} and the definitions of $F_i$ and $G_i$, we have that for all $\beta \in A$ and all $i\in \{1,\dots,N\}$:
\begin{equation}
\label{eq:ineq-Fi-Gi-beta}
    \left\vert F_i(\beta)-G_i(\beta) \right\vert\leq \tilde \ell_J W_2\left(\mathcal{L}\left(\xi_i,\Zb_N[\beta;(\balpha^{N,*})_{-i}])_i\right),\mu_0 \otimes \delta_{[\mu^N(\Zb^N\psi^N(\balpha^{N,*}))]_i}\right).
\end{equation}
The $W_2$ Wasserstein distance between two probability measures is bounded from above by the $L^2$-distance between random variables with these distributions. By choosing random variables $(\xi_i, \Zb_N[\beta;(\balpha^{N,*})_{-i}])_i) \sim \mathcal{L}(\xi_i,\Zb_N[\beta;(\balpha^{N,*})_{-i}])_i)$ and $(\xi_i,[\mu^N(\Zb^N\psi^N(\balpha^{N,*}))]_i) \sim \mu_0 \otimes \delta_{[\mu^N(\Zb^N\psi^N(\balpha^{N,*}))]_i}$, we have
\begin{equation}
\label{eq:ineq-W2-ZN-delta}
    \begin{split}
        &W_2(\mathcal{L}(\xi_i,\Zb_N[\beta;(\balpha^{N,*})_{-i}])_i), \mu_0 \otimes\delta_{[\mu^N(\Zb^N\psi^N(\balpha^{N,*}))]_i})^2 \\
        \leq & \EE\left[\left\vert (\xi_i-\xi_i)^2 \right\vert\right] + \EE\left[\left\vert [\Zb_N[\beta;(\balpha^{N,*})_{-i}]]_i-[\mu^N(\Zb^N\psi^N(\balpha^{N,*}))]_i\right\vert^2\right]
        \\
        \leq & 2 \EE\Bigl[\bigl|[\Zb_N[\beta;(\balpha^{N,*})_{-i}]]_i-[\mu^N(\Zb^N\psi^N([\beta;(\balpha^{N,*})_{-i}]))]_i\bigr|^2\Bigr]
        \\
        &+2\left\vert [\mu^N(\Zb^N\psi^N[\beta;(\balpha^{N,*})_{-i}])]_i-[\mu^N(\Zb^N\psi^N(\balpha^{N,*}))]_i\right\vert^2.
    \end{split}
\end{equation}
Averaging equation \eqref{fo:beta_diff}  over $i$ and combining~\eqref{eq:ineq-Fi-Gi-beta} and \eqref{eq:ineq-W2-ZN-delta} for $\beta = \beta_i^1, \beta_i^2$, we arrive at:
    \begin{equation}
    \label{eq:average_beta_diffs}
        \begin{split}
            \frac{1}{N}\sum_{i=1}^N(\beta^1_i-\beta^2_i)^2 \leq \frac{4\sqrt{2}\tilde \ell_J}{\ell_c}&\Bigl( \frac{1}{N}\sum_{i=1}^N \EE\Bigl[\bigl|[\Zb_N[\beta^1_i;(\balpha^{N,*})_{-i}]]_i-[\mu^N(\Zb^N\psi^N([\beta^1_i;(\balpha^{N,*})_{-i}]))]_i\bigr|^2\Bigr]
            \\
            &+\frac{1}{N}\sum_{i=1}^N\EE\Bigl[\bigl|[\Zb_N[\beta^2_i;(\balpha^{N,*})_{-i}]]_i-[\mu^N(\Zb^N\psi^N([\beta^2_i;(\balpha^{N,*})_{-i}]))]_i\bigr|^2\Bigr]
            \\
            &+\frac{1}{N}\sum_{i=1}^N \left\vert [\mu^N(\Zb^N\psi^N[\beta^1_i;(\balpha^{N,*})_{-i}])]_i-[\mu^N(\Zb^N\psi^N(\balpha^{N,*}))]_i\right\vert^2
            \\
            &+\frac{1}{N}\sum_{i=1}^N \left\vert [\mu^N(\Zb^N\psi^N[\beta^2_i;(\balpha^{N,*})_{-i}])]_i-[\mu^N(\Zb^N\psi^N(\balpha^{N,*}))]_i\right\vert^2 \Bigr)^{1/2}.
        \end{split}
    \end{equation}

Since $\sqrt{2c_z} \zeta_1 <1$, we can also choose $\epsilon$ so that $\sqrt{2c_z} (\zeta_1 + \epsilon) <1$, and we can also take $N^*$ large enough so that $|\zeta_1 - \|W^N\|_F | < \epsilon$ for all $N \geq N^*$. Then for $N \geq N^*$, we have the hypothesis of Lemma \ref{lemma:W^N_and_W_N}, $\sqrt{2c_z} \| W^N \|_F <1$. Using Lemma \ref{lemma:W^N_and_W_N}, for any $\bbeta \in L^2(I)$ we get:
\begin{equation*}
\frac{1}{N}\sum_{i=1}^N \EE\left[\left\vert(\Zb_N[\beta_i;(\balpha^{N,*})_{-i}]_i-[\mu^N(\Zb^N\psi^N([\beta;(\balpha^{N,*})_{-i}]))]_i\right\vert^2\right]\leq \frac{\kappa_0}{N}.
\end{equation*}

For any $\bbeta$, using \eqref{fo:pointwise} from Lemma \ref{lemma:W_Lip}, and averaging over $i$ we have:
    \begin{equation*}
        \begin{split}
            &\frac{1}{N} \sum_{i=1}^N\left\vert \left[\mu^N\Big(\Zb^N\psi^N([\beta;\balpha^{N,*}_{-i}\right])\Big)]_i-\left[\mu^N\Big(\Zb^N\psi^N(\balpha^{N,*})\Big)\right]_i\right\vert^2
            \\
            &\hskip 45pt
            \leq \frac{c_\alpha}{(1-\sqrt{c_z} \|\Wb^N\|)^2} \Bigl( \frac{1}{N^2}\sum_{i=1}^N\sum_{j=1}^N (W^N_{i,j})^2\Bigr)\frac{1}{N}\sum_{i=1}^N \|\psi^N[\beta;(\balpha^{N,*})_{-i}]-\psi^N(\balpha^{N,*}) \|^2_{L^2(I)}
            \\
            &\hskip 45pt
            = \frac{c_\alpha \|W^N\|_F^2}{(1-\sqrt{c_z} \|\Wb^N\|)^2} \cdot \frac{1}{N^2}\sum_{i=1}^N | \beta-\alpha^{N,*}_i |^2
            \\
            &\hskip 45pt
            \leq \frac{c_\alpha (\zeta_1 + \epsilon)^2}{(1-\sqrt{c_z}(\zeta_2+\epsilon))^2} \cdot \frac{1}{N^2}\sum_{i=1}^N | \beta-\alpha^{N,*}_i |^2
            \\
            &\hskip 45pt
            =\frac{\kappa_2}{N^2} \sum_{i=1}^N| \beta-\beta^1_i |^2,
        \end{split}
    \end{equation*}
    where we used the fact that $\beta^1_i=\alpha^{N,*}_i$. Now returning to equation \eqref{eq:average_beta_diffs} we have:
    \begin{equation*}
    \begin{split}
        \left(\frac{1}{N}\sum_{i=1}^N(\beta^1_i-\beta^2_i)^2 \right)^2 &\leq \frac{32\tilde \ell_J^2}{\ell_c^2} \left(\frac{2\kappa_0}{N}+ \frac{\kappa_2}{N^2} \sum_{i=1}^N ( \beta^1_i-\beta^2_i )^2 \right) \\
        & \leq \frac{32\tilde \ell_J^2}{\ell_c^2} \left(\frac{2\kappa_0+\kappa_2}{N}+\frac{\kappa_2}{N} \left( \frac{1}{N} \sum_{i=1}^N ( \beta^1_i-\beta^2_i )^2\right)^2 \right),
    \end{split}
    \end{equation*}
or after rearranging:
    \begin{equation*}
    \begin{split}
        \left(1- \frac{32\tilde \ell_J^2 \kappa_2}{\ell_c^2} \frac{1}{N}\right)\left(\frac{1}{N}\sum_{i=1}^N(\beta^1_i-\beta^2_i)^2 \right)^2 & \leq \frac{32\tilde \ell_J^2}{\ell_c^2} \left(\frac{2\kappa_0+\kappa_2}{N}\right).
    \end{split}
    \end{equation*}
We can again choose $N^*$ such that $\frac{32\tilde \ell_J^2 \kappa_2}{\ell_c^2} \frac{1}{N} < \frac{1}{2}$ for all $N \geq N^*$, and thus:
    \begin{equation*}
    \begin{split}
        \left(\frac{1}{N}\sum_{i=1}^N(\beta^1_i-\beta^2_i)^2 \right)^2 & \leq \frac{64\tilde \ell_J^2}{\ell_c^2} \left(\frac{2\kappa_0+\kappa_2}{N}\right).
    \end{split}
    \end{equation*}

We compute:
    \begin{equation*}
        \begin{split}
            \left\Vert \psi^N(\balpha^{N,*})-\Bb\Zb^N\psi^N(\balpha^{N,*}) \right\Vert ^2_{L^2(I)}&=\left\Vert \psi^N(\balpha^{N,*})-\Bb\psi^N(\mu^N(\Zb^N\psi^N(\balpha^{N,*}))) \right\Vert ^2_{L^2(I)} \\
            &=\frac{1}{N}\sum_{i=1}^N\left[\beta^1_i-\beta^2_i\right]^2\\
            & \leq \frac{8\tilde \ell_J}{\ell_c} \sqrt{\left(\frac{2\kappa_0+\kappa_2}{N}\right)},
        \end{split}
    \end{equation*}
proving the desired estimate \eqref{fo:d_s} because of \eqref{fo:d_S_temp}.
\end{proof}

\vskip 12pt
\begin{proof}[Proof of Theorem \ref{thm:convergence}]
Let $\hat\balpha^N$ be a Nash equilibrium for the graphon game with the graphon $w^N$, i.e.\ $\hat\balpha^N$ is a fixed point of $\Bb\Zb^N$. For each $N\ge 1$,  the triangle inequality implies
$$
d_{\cS}(\hat\balpha,\psi^N(\balpha^{N,*}))\leq d_\cS(\hat\balpha,\hat\balpha^N) +  d_{\cS}(\hat\balpha^N,\psi^N(\balpha^{N,*})).
$$
Since by assumption we have $\delta_\square(w^N,w) \to 0$,  \cite[Theorem 11.59]{lovasz2012large} implies that for each $N\ge 1$, there exists a relabeling of the \emph{steps} of $w^N$ which is a measure preserving invertible function $\varphi^N \in \cS_{I}$  such that if we set $\tilde w^N=(w^N)^{\varphi^N}$ and denote by $\tilde\Wb^N$ the associated graphon operator, then we have the convergence  of the operators: $||\tilde\Wb^N-\Wb|| \to 0$. Now using Theorem \ref{thm:perturbation}, we get $\left\Vert\left(\balpha^N\right)^{\varphi^N}-\hat{\balpha}\right\Vert_{L^2(I)} \to 0$, which implies $d_{\cS}(\hat{\balpha},\hat{\balpha}^N) \to 0$. Since by assumption $||w^N ||_2 \to ||w||_2$ and $||\Wb^N || \to ||\Wb||$, where $||w||_2$ and $||\Wb||$ satisfy the assumptions of $\zeta_1$ and $\zeta_2$, respectively, in Lemma \ref{lemma:convergence_controls}, we have $d_{\cS}(\hat\balpha^N,\psi^N(\balpha^{N,*}))\to 0$. This concludes the proof.
\end{proof}

Theorem \ref{thm:convergence} says that the set of controls for the $N$-player Nash equilibrium (assuming it exists) converges in some sense to the Nash equilibrium strategy for the graphon game. If the graphs defining the $N$-player games are constructed with one of the sampling procedures in Remark \ref{remark:sampling}, then we can estimate the rate of convergence.

\begin{theorem} \label{thm:convergence_2}
Consider the same setting as Theorem \ref{thm:convergence}. In addition, assume that the graphon $w$ takes values in $[0,1]$, and that the sequence of matrices $(W^N)_{N \geq 1}$ are a random sequence constructed with one of the sampling procedures in Remark \ref{remark:sampling}. Further suppose $\sqrt{2c_z} < 1$. There exists an $N^*$ such that for every $N \geq N^*$, with probability at least $1-\exp(-N/(2\log_2N))$ it holds:
$$
d_{S}(\psi^N(\balpha^{N,*}),\hat{\balpha}) \leq \frac{\kappa^*}{N^{1/4}}+ \frac{\kappa \sqrt{184}}{(\log_2N)^{1/4}},
$$
with $$\kappa^*:=2\kappa_1^* \sqrt{\frac{2\tilde \ell_J}{\ell_c}} \left(2\kappa_0^*+\kappa_2^*\right)^{1/4}, \quad  \kappa_1^*:= \left( 1-\frac{\ell_J}{\ell_c} \cdot \frac{\sqrt{c_\alpha}(\| \Wb \|+\epsilon)}{1-\sqrt{c_z} (\| \Wb \|+ \epsilon)} \right)^{-1},$$

$$\kappa_0^*:=\left( \frac{2 \mathbb{E}(\xi_0^2)}{1-2c_z} \right), \quad \text{and} \quad \kappa_2^*:=\frac{c_\alpha }{(1-\sqrt{c_z}(\| \Wb \|+\epsilon))^2}.$$
\end{theorem}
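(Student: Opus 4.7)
The plan is to refine the proof of Theorem~\ref{thm:convergence} by replacing each qualitative convergence step with a quantitative one, using the concentration inequalities available for graphon sampling. As before, I would apply the triangle inequality
\begin{equation*}
d_{\cS}(\hat\balpha,\psi^N(\balpha^{N,*})) \;\leq\; d_{\cS}(\hat\balpha,\hat\balpha^N) \;+\; d_{\cS}(\hat\balpha^N,\psi^N(\balpha^{N,*})),
\end{equation*}
and control the two summands by Theorem~\ref{thm:perturbation} (first) and Lemma~\ref{lemma:convergence_controls} (second), respectively. The only new ingredients compared to Theorem~\ref{thm:convergence} are explicit rates: a concentration bound on the cut distance $\delta_\square(w^N,w)$, and a bound converting cut-norm perturbations of the graphon into operator-norm perturbations.

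For the second summand, I would apply Lemma~\ref{lemma:convergence_controls} to the pair $(w,w^N)$, using $\zeta_1=\|w\|_2$ and $\zeta_2=\|\Wb\|$. Since $w$ takes values in $[0,1]$, one has $\|w\|_2\leq 1$, which gives $\kappa_0\leq \kappa_0^*$ and $\kappa_2\leq \kappa_2^*$, while the choice $\zeta_2=\|\Wb\|$ with a suitable $\epsilon>0$ yields $\kappa_1\leq \kappa_1^*$. The hypothesis $\sqrt{2c_z}<1$ ensures that, on the concentration event described below, the contraction conditions $\sqrt{2c_z}\|w^N\|_F<1$ and $\sqrt{c_z}\|\Wb^N\|<1$ hold for $N$ large enough, so Lemma~\ref{lemma:convergence_controls} applies and produces the contribution $\kappa^*/N^{1/4}$.

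For the first summand, I would combine Theorem~\ref{thm:perturbation} with a cut-distance-to-operator-norm estimate. For any $\varphi\in\cS_I$, setting $\tilde w^N:=(w^N)^\varphi$ with graphon operator $\tilde\Wb^N$, Theorem~\ref{thm:perturbation} gives $\|(\hat\balpha^N)^\varphi-\hat\balpha\|_{L^2(I)}\leq \kappa\|\Wb-\tilde\Wb^N\|$. Taking the infimum over $\varphi$,
\begin{equation*}
d_{\cS}(\hat\balpha,\hat\balpha^N) \;\leq\; \kappa\,\inf_{\varphi\in\cS_I}\|\Wb-\tilde\Wb^N\|.
\end{equation*}
Using the classical inequality $\|\Wb-\Wb'\|\leq \sqrt{8\,\|w-w'\|_\square}$ for graphons valued in $[-1,1]$ (see Lov\'asz \cite[Lemma~8.11]{lovasz2012large}) followed by minimization over $\varphi$, this upper bound becomes $\kappa\sqrt{8\,\delta_\square(w,w^N)}$. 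Invoking Lov\'asz's sampling concentration result \cite[Lemma~10.16]{lovasz2012large}, which states that for both sampling procedures of Remark~\ref{remark:sampling}, with probability at least $1-\exp(-N/(2\log_2 N))$ one has $\delta_\square(w^N,w)\leq 23/\sqrt{\log_2 N}$, yields the desired contribution $\kappa\sqrt{184}/(\log_2 N)^{1/4}$.

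The main obstacle is the bookkeeping of high-probability events: the concentration of $\delta_\square(w^N,w)$, the convergences $\|w^N\|_F\to\|w\|_2$ and $\|\Wb^N\|\to\|\Wb\|$, and the finite-$N$ strict-contraction inequalities needed to apply Theorem~\ref{thm:perturbation} and Lemma~\ref{lemma:convergence_controls} must all hold simultaneously on a single event of probability at least $1-\exp(-N/(2\log_2 N))$. The key point is that the cut-distance estimate already controls the operator-norm discrepancy (via Lemma~8.11 of \cite{lovasz2012large}), and hence also $\|\Wb^N\|$ and $\|w^N\|_F$ up to arbitrarily small slack, so all auxiliary events can be folded into the single concentration event above by enlarging $N^*$ to absorb the $\epsilon$ margin built into $\kappa_0^*,\kappa_1^*,\kappa_2^*$. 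Combining the two bounds then gives the stated inequality.
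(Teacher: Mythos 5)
Your proposal follows essentially the same route as the paper: the same triangle-inequality decomposition, with Theorem \ref{thm:perturbation} combined with the cut-norm concentration bound and the estimate $\|\Wb-\Wb'\|\leq\sqrt{8\|w-w'\|_\square}$ controlling the first term, and a realization-uniform version of Lemma \ref{lemma:convergence_controls} controlling the second. The one refinement worth noting is that the paper obtains the starred constants not from $\|w\|_2\leq 1$ (which leaves an $\epsilon$ slack that $\kappa_0^*$ and $\kappa_2^*$ do not accommodate, since they contain no margin on the Frobenius side) but from the deterministic bound $\|W^N\|_F\leq 1$, valid for every realization because both sampling procedures produce matrix entries in $[0,1]$; this is precisely what allows $N^*$ to be chosen independently of the realization and all auxiliary conditions to be folded into the single concentration event.
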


\begin{proof}
By the triangle inequality,
$$
d_{S}(\psi^N(\balpha^{N,*}),\hat{\balpha})\leq d_{S}(\psi^N(\balpha^{N,*}),\hat{\balpha}^N)+d_{S}(\hat{\balpha}^N,\hat{\balpha}).
$$
As in the proof of \cite[Theorem 6]{parise2018graphon}, there exists $\varphi^N \in \cS_{I}$ for every $N \geq 1$ such that with probability at least $1-\exp(-N/(2\log_2N))$ it holds:
\begin{equation*}
    \left\Vert (w^N)^{\varphi^N}-w \right\Vert_\square \leq \frac{23}{\sqrt{\log_2 N}}.
\end{equation*}
Thus with \cite[Lemma 4]{parise2018graphon}, with probability at least $1-\exp(-N/(2\log_2N))$ it holds:
\begin{equation*}
    ||(\Wb^N)^{\varphi^N}-\Wb|| \leq \sqrt{8\left\Vert(w^N)^{\varphi^N}-w\right\Vert_\square} \leq \frac{\sqrt{184}}{(\log_2N)^{1/4}}.
\end{equation*}
Then with Theorem \ref{thm:perturbation}:
\begin{equation*}
    d_{S}(\hat{\balpha}^N,\hat{\balpha}) \leq \left\Vert (\hat{\balpha}^N)^{\varphi^N}-\hat{\balpha} \right\Vert_{L^2(I)} \leq \kappa ||(\Wb^N)^{\varphi^N}-\Wb|| \leq \frac{\kappa \sqrt{184}}{(\log_2N)^{1/4}}.
\end{equation*}

Now we modify the proof of Lemma \ref{lemma:convergence_controls} to see that we can choose $N^*$ independent of the realization of the sequence $(W^N)_{N \geq 1}$, so that for all $N \geq N^*$, with probability at least $1-\exp(-N/(2\log_2N))$, the statement of Lemma \ref{lemma:convergence_controls} holds. Indeed, let $\epsilon>0$ so that \eqref{eq:choose_epsilon_zeta_2} holds with $\zeta_2$ replaced by $\| \Wb \|$. Now let $N^*$ so that $\frac{\sqrt{184}}{(\log_2N^*)^{1/4}} \leq \epsilon$. Then for all $N \geq N^*$, with probability at least $1-\exp(-N/(2\log_2N))$, we have $||(\Wb^N)^{\varphi^N}-\Wb|| \leq \epsilon $, which implies $\left\vert \|(\Wb^N)^{\varphi^N}\|-\|\Wb\| \right\vert \leq \epsilon $. Because the graphon takes values in $[0,1]$ by assumption, clearly we have $\| W^N \|_F \leq 1$ for all realizations of the sequence and for all $N \geq 1$. Since we assumed $\sqrt{2c_z} < 1$, the hypothesis of Lemma \ref{lemma:W^N_and_W_N} is satisfied for all realizations and for all $N \geq 1$, and we can give a larger upper bound for the right hand side of the statement in Lemma \ref{lemma:W^N_and_W_N}: $\EE\left[\left\vert(\Zb_N{\balpha}^N)_i-[\mu^N(\Zb^N\psi^N(\balpha^N))]_i\right\vert^2\right]\leq \frac{1}{N} \left(\frac{2\mathbb{E}(\xi_0^2)}{1-2c_z}\right)$.
Thus for all $N \geq N^*$, with probability at least $1-\exp(-N/(2\log_2N))$:
$$
d_{S}(\psi^N(\balpha^{N,*}),\hat{\balpha}^N) \leq \frac{\kappa^*}{N^{1/4}},
$$
and we conclude.
\end{proof}

\subsection[Approximate Equilibria]{\textbf{Approximate Equilibria}}
Now we consider whether $N$ players using some controls picked from the graphon game strategy are in an approximate Nash equilibrium. More specifically, we define the following.

\begin{definition}[$\epsilon$-Nash equilibrium]
We say a control profile $\bm{\alpha}^N=(\alpha^N_1,\dots,\alpha^N_N)$ is an $\epsilon$-Nash equilibrium for the $N$-player game if:
\begin{equation*}
    \tilde\cJ(\alpha^N_i,\mathcal{L}(\xi_i,[\Zb_N\bm{\alpha}^N]_i))\leq \tilde\cJ(\beta,\mathcal{L}(\xi_i, [\Zb_N[\beta;\bm{\alpha}^N_{-i}]]_i) + \epsilon, \quad \forall i=1,\dots, N, \forall \beta\in A.
\end{equation*}
\end{definition}

\begin{theorem}\label{thm:epsilon_Nash}
Assume Assumptions \ref{hyp:b}, \ref{hyp:tilde_J}, \ref{hyp:J_2}, and \ref{hyp:2}, and that the set of controls, $A$, is bounded. Suppose that $(W^N)_{N \geq 1}$ where $W^N=[W^N_{i,j}]_{i,j=1,\ldots,N} \in \mathbb{R}^{N \times N}$ for all $N\geq 1$ is a sequence of symmetric matrices satisfying $\max_{i=1,\dots,N}\frac{1}{N^2}\sum_{j=1}^N (W^N_{i,j})^2 \leq \tilde{\zeta}$. For each $N\ge 1$, let $w^N:=\Psi^N(W^N)$, and suppose there exists a graphon $w$ satisfying $\sqrt{2c_z} \|w\|_2 <1$, $ \sqrt{c_z} \| \Wb \| < 1$, and $\sup_{x\in I} (\int_I w(x,y)^2dy )^{1/2}\leq \zeta$, such that $\lim_{N\to\infty}\| w^N\|_2=\| w\|_2$, $\lim_{N\to\infty}\| \Wb^N\|=\| \Wb\|$, and $\delta_\square(w^N,w) \leq  \gamma_N$ with $ N\gamma_N \to 0$ as $N\to \infty$. Suppose the indices of $W^N$ are already permuted such that the same is true for $d_\square(w^N,w)$.
Let $\hat{\balpha}$ be a Nash equilibrium for the graphon game with graphon $w$. Suppose we use $\hat{\balpha}$ to define a strategy $\mu^N(\hat{\balpha}) \in A^N$. Then $\mu^N(\hat{\balpha})$ is an $\epsilon_N$-Nash equilibrium for the $N$-player games specified by $W^N$ with $\epsilon_N\to 0$ as $N\to \infty$.
\end{theorem}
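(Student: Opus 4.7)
The plan is to fix an index $i \in \{1, \ldots, N\}$ and a deviation $\beta \in A$, and bound the one-sided gain
$$
\Delta^i_N(\beta) := \tilde\cJ\bigl(\alpha^N_i, \mathcal{L}(\xi_i, [\Zb_N \mu^N(\hat{\balpha})]_i)\bigr) - \tilde\cJ\bigl(\beta, \mathcal{L}(\xi_i, [\Zb_N[\beta; \mu^N(\hat{\balpha})_{-i}]]_i)\bigr),
$$
where $\alpha^N_i = [\mu^N(\hat{\balpha})]_i$, by an $\epsilon_N$ tending to zero uniformly in $i$ and $\beta$. The guiding idea is to interpose a graphon-game cost evaluated at the deterministic value $z_i := [\Zb\hat{\balpha}]_{x_i}$ for some $x_i \in I_i := [(i-1)/N, i/N)$ in both terms and then exploit the pointwise Nash property of $\hat{\balpha}$. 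The Lipschitz hypothesis on $\tilde\cJ$ (Assumption \ref{hyp:tilde_J}), together with the coupling bound $W_2(\mathcal{L}(\xi_i, Z), \mu_0 \otimes \delta_z) \leq \|Z - z\|_{L^2(\Omega)}$ (valid because $\xi_i$ appears as a common marginal in both measures), reduces the cost comparison to an $L^2(\Omega)$-comparison of aggregates $\|[\Zb_N \balpha^N]_i - z_i\|_{L^2(\Omega)}$, computed with $\balpha^N = \mu^N(\hat{\balpha})$ on the left and $\balpha^N = [\beta; \mu^N(\hat{\balpha})_{-i}]$ on the right.

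The core of the proof is to control this $L^2(\Omega)$-distance by a three-legged decomposition through the graphon-game aggregates associated with $w^N$ and with $w$. Lemma \ref{lemma:W^N_and_W_N} handles the first leg and delivers the bound $\tfrac{1}{N^2}\sum_j (W^N_{i,j})^2 \cdot \tfrac{2\EE[\xi_0^2]}{1 - 2c_z \|W^N\|_F^2}$, which is $O(1/N)$ uniformly in $i$ under the row-sum hypothesis and the fact that $\sqrt{2c_z}\|w^N\|_2 \to \sqrt{2c_z}\|w\|_2 < 1$ ensures $\sqrt{2c_z}\|W^N\|_F < 1$ for $N$ large. Lemma \ref{lemma:W_W_tilde} handles the second leg with a bound of order $\|\Wb^N - \Wb\|$, and the standard inequality $\|\Wb^N - \Wb\| \leq \sqrt{8\,d_\square(w^N, w)} \leq \sqrt{8\gamma_N}$ (as already invoked in the proof of Theorem \ref{thm:convergence_2}) turns this into $O(\sqrt{\gamma_N})$. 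The pointwise estimate \eqref{fo:pointwise} of Lemma \ref{lemma:W_Lip}, combined with $\sup_x (\int_I w(x,y)^2 dy)^{1/2} \leq \zeta$, converts the $L^2(I)$-smallness $\|\psi^N(\mu^N(\hat{\balpha})) - \hat{\balpha}\|_{L^2(I)} \to 0$ into a pointwise-in-$i$ bound $|[\Zb\psi^N(\mu^N(\hat{\balpha}))]_{x_i} - [\Zb\hat{\balpha}]_{x_i}|$ tending to zero. For the right-hand side, the single-player swap from $\alpha^N_i$ to $\beta$ perturbs $\psi^N(\balpha^N)$ by at most $\mathrm{diam}(A)/\sqrt{N}$ in $L^2(I)$ (since $A$ is bounded), which propagates via \eqref{fo:pointwise} to an $O(\zeta/\sqrt{N})$ aggregate perturbation.

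After these reductions, $\Delta^i_N(\beta)$ is dominated by error terms going to zero plus the deterministic quantity $\cJ(\alpha^N_i, z_i) - \cJ(\beta, z_i)$. The main obstacle is that $\alpha^N_i = N\int_{I_i} \hat{\alpha}_x dx$ is an interval average, while the graphon Nash property $\cJ(\hat{\alpha}_x, [\Zb\hat{\balpha}]_x) \leq \cJ(\beta, [\Zb\hat{\balpha}]_x)$ is only pointwise in $x$. The strong convexity of $\cJ(\cdot, z)$ in Assumption \ref{hyp:J_2} combined with Jensen's inequality gives
$$
\cJ(\alpha^N_i, z_i) \leq N\int_{I_i} \cJ(\hat{\alpha}_x, z_i)\, dx,
$$
and swapping $z_i$ for $[\Zb\hat{\balpha}]_x$ inside the integral at a cost controlled by $\ell_J \cdot |[\Zb\hat{\balpha}]_x - z_i|$ (using the Lipschitz-in-$z$ estimate on $\partial_\alpha \cJ$ and integrating in $\alpha$ over a bounded set) reduces matters to the pointwise Nash property applied inside the integral. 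Making this error uniform in $i$ requires controlling the modulus of regularity of $x \mapsto [\Zb\hat{\balpha}]_x$ at the scale $1/N$, and this is precisely where the hypothesis $N\gamma_N \to 0$ is expected to enter: when summed against the $O(1/\sqrt{N})$ per-player aggregate-variation contribution it must still vanish. Gathering all terms, $\epsilon_N = O(1/\sqrt{N}) + O(\sqrt{\gamma_N}) + o(1)$, completing the bound.
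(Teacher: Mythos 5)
Your architecture is the same as the paper's (interpose graphon-game quantities between the two $N$-player costs, reduce cost comparisons to aggregate comparisons via Assumption \ref{hyp:tilde_J} and the coupling bound $W_2(\cL(\xi_i,Z),\mu_0\otimes\delta_z)\le\|Z-z\|_{L^2(\Omega)}$, use Lemma \ref{lemma:W^N_and_W_N} for the $\Zb_N$-vs-$\Zb^N$ leg, Jensen plus convexity for the interval-average issue, and boundedness of $A$ for the single-player swap), but the central step has a genuine gap. You interpose the cost at the single-point value $z_i=[\Zb\hat\balpha]_{x_i}$ and then need $N\int_{I_i}|[\Zb\hat\balpha]_x-[\Zb\hat\balpha]_{x_i}|\,dx$ to vanish uniformly in $i$, i.e.\ a modulus of continuity for $x\mapsto[\Zb\hat\balpha]_x$ at scale $1/N$. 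No such modulus exists for a general square-integrable graphon: $[\Zb\hat\balpha]_x=\int_I w(x,y)b(\hat\alpha_y,[\Zb\hat\balpha]_y)\,dy$ is only as regular as $x\mapsto w(x,\cdot)$, and e.g.\ a rank-one graphon $w(x,y)=h(x)h(y)$ with $h$ bounded but nowhere continuous gives an aggregate with no oscillation control on small intervals. Moreover, the hypothesis $N\gamma_N\to0$ cannot supply this: it measures the cut-norm distance between $w^N$ and $w$, not any regularity of $w$ itself, so your diagnosis of where that hypothesis enters is wrong. The paper avoids the issue by never evaluating the aggregate at a point: it interposes the interval averages $[\mu^N(\Zb\psi^N\mu^N(\hat\balpha))]_i$ and, crucially, $[\mu^N(\cJ(\hat\alpha_\cdot,[\Zb\hat\balpha]_\cdot))]_i$, so the pointwise Nash inequality is simply integrated over $[(i-1)/N,i/N)$ (its term $(v)\le0$) and no comparison of $[\Zb\hat\balpha]_x$ with a per-interval constant built from $[\Zb\hat\balpha]$ itself is ever required.

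A second, smaller accounting problem sits in your ``second leg.'' Lemma \ref{lemma:W_W_tilde} gives only an $L^2(I)$ bound $\|\Zb\balpha-\Zb^N\balpha\|_{L^2(I)}\le C\|\Wb-\Wb^N\|$, whereas you need a bound uniform in $i$ on the per-interval averages $N\int_{I_i}|[\Zb\balpha]_x-[\Zb^N\balpha]_x|\,dx$; Cauchy--Schwarz converts the former into the latter only at the cost of a factor $\sqrt{N}$, so your claimed $O(\sqrt{\gamma_N})$ is really $O(\sqrt{N\gamma_N})$ (still vanishing, but this is in fact the place where $N\gamma_N\to0$ is consumed, not the regularity step). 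The paper instead proves Lemma \ref{lemma:A_convergence}, which exploits the cut norm directly to bound $\sup_i N|\int_{I_i}\int_I(w-w^N)(x,y)g(y)\,dy\,dx|$ by $2N\gamma_N\|g\|_\infty$, and combines it in Lemma \ref{lemma:W_W_N} with the uniform row bound $\sup_x(\int_Iw(x,y)^2dy)^{1/2}\le\zeta$ — a hypothesis your argument never uses for this purpose — to get the required uniform-in-$i$ control. To repair your proof you would replace the single-point interposition by interval averages throughout and route the $w$-vs-$w^N$ comparison through Lemmas \ref{lemma:A_convergence} and \ref{lemma:W_W_N}, at which point you recover the paper's argument.
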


The proof of Theorem \ref{thm:epsilon_Nash} relies on the following two lemmas.
\begin{lemma}\label{lemma:A_convergence}
Suppose for a sequence of symmetric matrices $(W^N)_{N\geq1}$, and corresponding piecewise constant graphons $w^N:=\Psi^N(W^N)$, there exists a graphon $w$ such that $d_\square(w^N,w) \leq  \gamma_N$ with $ N\gamma_N \to 0$ as $N\to \infty$. Let $g:I\to \mathbb{R}$ be a bounded and measurable function. Then
$$ \sup_{i \in \{1,\dots,N \}} N \left\vert \int_{(i-1)/N}^{i/N}\int_I \left( w(x,y)-w^N(x,y) \right)g(y)dydx \right\vert \to 0.$$
\end{lemma}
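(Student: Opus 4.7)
The plan is to recognise the quantity inside the absolute value as the pairing of the signed kernel $w - w^N$ against the product test function $1_{[(i-1)/N,i/N)}(x)\,g(y)$, and then to control this pairing uniformly in $i$ by the cut norm $\|w-w^N\|_\square = d_\square(w,w^N) \leq \gamma_N$. The standard graphon-theoretic fact I would invoke is that, for any measurable kernel $u$ on $I \times I$ and any bounded measurable $f,h\colon I \to \RR$, one has $|\int f(x)h(y)u(x,y)\,dx\,dy| \leq C\|f\|_\infty \|h\|_\infty \|u\|_\square$ for a universal constant $C$; once that is in hand, multiplying by $N$ and using $N\gamma_N \to 0$ finishes the argument.

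To establish this product-form cut-norm estimate in the case at hand, I would split $g = g^+ - g^-$ and normalise by $\|g\|_\infty$ to obtain two $[0,1]$-valued measurable functions $G^\pm := g^\pm/\|g\|_\infty$. For a generic $G\colon I \to [0,1]$, the layer-cake identity $G(y) = \int_0^1 1_{\{G > t\}}(y)\,dt$ combined with Fubini rewrites the pairing as
\[
\int_{S_i} \int_I (w-w^N)(x,y)\,G(y)\,dy\,dx = \int_0^1 \int_{S_i \times \{G > t\}} (w - w^N)(x,y)\,dx\,dy\,dt,
\]
where $S_i := [(i-1)/N,\, i/N)$. By the very definition of the cut norm, each inner rectangular integral is bounded in absolute value by $\|w-w^N\|_\square \leq \gamma_N$ uniformly in $t\in[0,1]$; integrating over $t$ and recombining the two halves of $g$ then yields the uniform bound
\[
\left|\int_{S_i}\int_I (w(x,y)-w^N(x,y))\,g(y)\,dy\,dx\right| \leq 2\|g\|_\infty\,\gamma_N, \qquad i = 1,\dots,N.
\]

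Multiplying by $N$ and taking the supremum over $i$ gives an upper bound of $2\|g\|_\infty N\gamma_N$, which vanishes by hypothesis. I do not anticipate any real obstacle: the only subtleties are the sign decomposition of $g$ (producing a harmless factor of $2$) and the routine check that Fubini applies to the layer-cake representation of $G$. The entire content of the lemma is simply the compatibility between the scale $1/N$ of the piecewise-constant partition underlying $w^N$ and the speed $\gamma_N$ at which $w^N$ converges to $w$ in the cut metric.
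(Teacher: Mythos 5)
Your proof is correct, and it reaches the same bound ($2\|g\|_\infty N\gamma_N$) as the paper's, but by a genuinely different reduction to the cut norm. The paper fixes $i$, applies Fubini, and then partitions the $y$-domain into $T^{\pm}:=\{y:\int_{(i-1)/N}^{i/N}(w-w^N)(x,y)\,dx \gtrless 0\}$; on each piece it pulls out $\|g\|_\infty$ and recognizes the remaining integral as $\int_{S\times T}(w-w^N)$ for a product of Borel sets, which the cut norm controls by $\gamma_N$. You instead decompose $g=g^+-g^-$, normalize, and use the layer-cake identity $G(y)=\int_0^1 1_{\{G>t\}}(y)\,dt$ to write the pairing as an average over $t$ of rectangular integrals $\int_{S_i\times\{G>t\}}(w-w^N)$, each again bounded by $\gamma_N$. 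Your route is essentially a proof of the standard comparison between the set-based cut norm and its functional version $\sup_{\|f\|_\infty,\|h\|_\infty\le 1}|\int f(x)h(y)u(x,y)\,dx\,dy|$, specialized to $f=1_{S_i}$; it is slightly more general and reusable, at the cost of the (routine, and correctly flagged) Fubini check for the layer-cake representation, which holds since $w-w^N$ is integrable on $I\times I$. The paper's sign-splitting avoids the extra integration variable but is tailored to the specific pairing at hand. Both arguments are complete; the only thing to make explicit in yours is that the level sets $\{G>t\}$ are Borel so that the supremum defining $\|\cdot\|_\square$ indeed applies to them, which is immediate from the measurability of $g$.
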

\begin{proof}
    For a given $N \geq 1$ and $i\in \{1,\dots,N \}$, we have:
    \begin{equation*}
        \begin{split}
            N\left\vert\int_{(i-1)/N}^{i/N}\int_I \left( w(x ,y)-w^N(x ,y)\right)g(y) dy dx  \right\vert=& N\left\vert\int_I \int_{(i-1)/N}^{i/N} \left( w(x ,y)-w^N(x ,y)\right) dx  g(y) dy\right\vert\\
            \leq & N\left\vert \int_{T^+} \int_{(i-1)/N}^{i/N} \left( w(x ,y)-w^N(x ,y)\right) dx  g(y) dy \right\vert\\
            &+N\left\vert \int_{T^-} \int_{(i-1)/N}^{i/N} \left( w(x ,y)-w^N(x ,y)\right) dx  g(y) dy \right\vert,
        \end{split}
    \end{equation*}
    where $T^+:=\{y: \int_{(i-1)/N}^{i/N} \left( w(x ,y)-w^N(x ,y)\right) dx  \geq 0\}$ and $T^-:=\{y: \int_{(i-1)/N}^{i/N} \left( w(x ,y)-w^N(x ,y)\right) dx  < 0\}$.
    Then:
    \begin{equation*}
            \left\vert \int_{T^+} \int_{(i-1)/N}^{i/N} \left( w(x ,y)-w^N(x ,y)\right) dx  g(y) dy \right\vert \leq C \int_{T^+} \int_{(i-1)/N}^{i/N} \left( w(x ,y)-w^N(x ,y)\right) dx  dy,
    \end{equation*}
    and
    \begin{equation*}
            \left\vert \int_{T^-} \int_{(i-1)/N}^{i/N} \left( w(x ,y)-w^N(x ,y)\right) dx  g(y) dy \right\vert \leq C \int_{T^-} \int_{(i-1)/N}^{i/N} \left( w^N(x ,y)-w(x ,y)\right) dx  dy.
    \end{equation*}
    Since $d_\square(w^N,w)\leq  \gamma_N$, we have:
    \begin{equation*}
        \sup_{S,T \in \mathcal{B}(I)} \left\lvert\int_{S\times T}\left(w(x,y)-w^N(x,y)\right)dx dy \right\rvert \leq  \gamma_N.
    \end{equation*}
    Thus,
    $$\int_{T^+} \int_{(i-1)/N}^{i/N} \left( w(x ,y)-w^N(x ,y)\right) dx  dy \leq  \gamma_N,$$
    and
    $$\int_{T^-} \int_{(i-1)/N}^{i/N} \left( w^N(x ,y)-w(x ,y)\right) dx  dy \leq  \gamma_N.$$
    Therefore,
    \begin{equation*}
        \sup_{i \in \{1,\dots,N \}} N\left\vert\int_{(i-1)/N}^{i/N}\int_I \left( w(x ,y)-w^N(x ,y)\right)g(y) dy dx  \right\vert \leq 2 N  \gamma_N \to 0.
    \end{equation*}
\end{proof}

\begin{lemma}\label{lemma:W_W_N}
Suppose for a sequence of symmetric matrices $(W^N)_{N\geq1}$ with corresponding piecewise constant graphons $w^N:=\Psi^N(W^N)$ satisfying $ \sqrt{c_z} \| \Wb^N \| < 1$, there exists a graphon $w$ satisfying \\ $\sup_{x\in I} (\int_I w(x,y)^2dy )^{1/2}\leq \zeta$ and $ \sqrt{c_z} \| \Wb \| < 1$, such that $d_\square(w^N,w) \leq  \gamma_N$ with $ N\gamma_N \to 0$ as $N\to \infty$. Under Assumptions \ref{hyp:b}, \ref{hyp:J_2}, and \ref{hyp:2}:
$$\sup_{i \in \{1,\dots,N \}} N \left\vert \int_{(i-1)/N}^{i/N}   [\Zb \psi^N \bm{\alpha}^N]_x - [\Zb^N \psi^N \bm{\alpha}^N]_x dx \right\vert \to 0,$$
for an arbitrary sequence of controls, $\bm{\alpha}^N\in A^N$, $N\geq 1$.
\end{lemma}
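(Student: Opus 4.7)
The plan is to set $\balpha := \psi^N(\bm{\alpha}^N)$, $\bz := \Zb\balpha$, $\bz^N := \Zb^N\balpha$ (all implicitly indexed by $N$), and to exploit the defining fixed-point equations for $\Zb$ and $\Zb^N$ to decompose the pointwise difference as
\begin{equation*}
z_x - z^N_x \;=\; \underbrace{\int_I [w(x,y)-w^N(x,y)]\, b(\alpha_y, z^N_y)\, dy}_{=:\, U_x} \;+\; \underbrace{\int_I w(x,y)\, [b(\alpha_y, z_y) - b(\alpha_y, z^N_y)]\, dy}_{=:\, V_x}.
\end{equation*}
Each piece will be estimated separately and the conclusion will follow from the triangle inequality. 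The specific pairing above is strategic: the only row-norm control available is on the limit graphon $w$ (the hypothesis $\sup_x (\int_I w(x,y)^2 dy)^{1/2}\leq \zeta$), so $w$ must accompany the Lipschitz-error factor in the $V$ piece, while the graphon difference $w - w^N$ is paired with the bounded integrand $b(\alpha_y, z^N_y)$.

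For the $U$ piece, $g_N(y) := b(\alpha_y, z^N_y)$ is measurable and uniformly bounded by $c_0$ (Assumption \ref{hyp:2}). The proof of Lemma \ref{lemma:A_convergence} carries through verbatim for this $N$-dependent but uniformly bounded integrand, yielding $\sup_{i} N|\int_{(i-1)/N}^{i/N} U_x\, dx| \leq 2 c_0 N \gamma_N \to 0$ under the hypothesis $N\gamma_N\to 0$.

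For the $V$ piece, the Lipschitz property of $b$ in $z$ (Assumption \ref{hyp:b}) together with Cauchy-Schwarz produces the pointwise bound
\begin{equation*}
|V_x| \;\leq\; \sqrt{c_z}\, \Bigl(\int_I w(x,y)^2\, dy\Bigr)^{1/2} \|\bz - \bz^N\|_{L^2(I)} \;\leq\; \sqrt{c_z}\,\zeta\, \|\bz - \bz^N\|_{L^2(I)}.
\end{equation*}
Hence $N|\int_{(i-1)/N}^{i/N} V_x\, dx|$ is bounded uniformly in $i$ by $\sqrt{c_z}\,\zeta\, \|\bz - \bz^N\|_{L^2(I)}$, and it remains to show that this quantity vanishes. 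I plan to combine Lemma \ref{lemma:W_W_tilde}, which gives $\|\bz - \bz^N\|_{L^2(I)} \leq \frac{c_0}{1 - \sqrt{c_z}\|\bW\|}\|\bW - \bW^N\|$, with the standard operator/cut-norm inequality $\|\bW - \bW^N\| \leq \sqrt{8\, d_\square(w^N, w)} \leq \sqrt{8 \gamma_N}$ (already invoked in the proof of Theorem \ref{thm:convergence_2}) and the fact that $\gamma_N\to 0$.

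The main obstacle, and what dictates the decomposition chosen above, is upgrading the control of the $V$ piece from an $L^2$-averaged estimate to a uniform-in-$i$ one. The pointwise row-norm hypothesis on $w$ is exactly what enables this upgrade; without it, Cauchy-Schwarz would only deliver $L^2$-in-$x$ control of $V_x$, which is insufficient to bound the supremum over $i$ of $N\int_{(i-1)/N}^{i/N} V_x\,dx$. Once both pieces are controlled uniformly in $i$, the triangle inequality closes the argument.
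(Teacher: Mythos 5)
Your proposal is correct and follows essentially the same route as the paper's proof: the identical decomposition via the fixed-point identity $\Zb\balpha=\Wb[b(\balpha,\Zb\balpha)]$, with the graphon difference paired against the bounded drift (handled by Lemma \ref{lemma:A_convergence}) and the limit graphon $w$ paired against the Lipschitz increment (handled pointwise via the row-norm bound $\zeta$ and Lemma \ref{lemma:W_W_tilde}). Your added remarks — that Lemma \ref{lemma:A_convergence} applies to the $N$-dependent but uniformly bounded integrand, and that $\|\Wb-\Wb^N\|\to 0$ follows from the cut-norm bound as in Theorem \ref{thm:convergence_2} — merely make explicit two steps the paper leaves implicit.
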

\begin{proof}
Fix $N \geq 1$ and $i \in \{1,\dots,N \}$. With equation \eqref{eq:connection_W_A} and its counterpart for $\Zb^N$ and $\Wb^N$, we have:
\begin{equation*}
    \begin{split}
        & N\left\vert \int_{(i-1)/N}^{i/N} [\Zb\psi^N(\bm{\alpha}^N)]_x-[\Zb^N\psi^N(\bm{\alpha}^N)]_x dx\right\vert 
        \\
        =&N \left\vert \int_{(i-1)/N}^{i/N} \left[\Wb\left[b(\psi^N(\bm{\alpha}^N),\Zb\psi^N(\bm{\alpha}^N))\right]\right]_x-\left[\Wb^N\left[b(\psi^N(\bm{\alpha}^N),\Zb^N\psi^N(\bm{\alpha}^N))\right]\right]_x  dx\right\vert 
        \\
         \leq& N \int_{(i-1)/N}^{i/N}\left\vert  \left[\Wb\left[b(\psi^N(\bm{\alpha}^N),\Zb\psi^N(\bm{\alpha}^N))\right]\right]_x-\left[\Wb\left[b(\psi^N(\bm{\alpha}^N),\Zb^N\psi^N(\bm{\alpha}^N))\right]\right]_x \right\vert dx
         \\
        &+N\left\vert\int_{(i-1)/N}^{i/N} \left[\Wb\left[b(\psi^N(\bm{\alpha}^N),\Zb^N\psi^N(\bm{\alpha}^N))\right]\right]_x-\left[\Wb^N\left[b(\psi^N(\bm{\alpha}^N),\Zb^N\psi^N(\bm{\alpha}^N))\right]\right]_x dx\right\vert 
        \\
        =& N \int_{(i-1)/N}^{i/N}\left\vert \int_I w(x,y)\left[b((\psi^N \bm{\alpha}^N)_y,[\Zb\psi^N(\bm{\alpha}^N)]_y)-b((\psi^N \bm{\alpha}^N)_y,[\Zb^N\psi^N(\bm{\alpha}^N)]_y) \right] dy \right\vert dx
        \\
        &+N \left\vert \int_{(i-1)/N}^{i/N}\int_I \left( w(x,y)-w^N(x,y) \right) b((\psi^N \bm{\alpha}^N)_y,[\Zb^N\psi^N(\bm{\alpha}^N)]_y) dydx \right\vert 
        \\
        \leq & \sqrt{c_z}\left\Vert \Zb\psi^N(\bm{\alpha}^N)-\Zb^N\psi^N(\bm{\alpha}^N) \right\Vert_{L^2(I)} N \int_{(i-1)/N}^{i/N} \left(\int_0^1 w(x,y)^2dy \right)^{1/2}dx 
        \\
        &+ N \left\vert \int_{(i-1)/N}^{i/N}\int_I \left( w(x,y)-w^N(x,y) \right) b(\psi^N(\bm{\alpha}^N)_y,[\Zb^N\psi^N(\bm{\alpha}^N)]_y) dydx \right\vert 
        \\
        \leq & \zeta \sqrt{c_z}\left\Vert \Zb\psi^N(\bm{\alpha}^N)-\Zb^N\psi^N(\bm{\alpha}^N) \right\Vert_{L^2(I)}  
        \\
        &+ N \left\vert \int_{(i-1)/N}^{i/N}\int_I \left( w(x,y)-w^N(x,y) \right) b(\psi^N(\bm{\alpha}^N)_y,[\Zb^N\psi^N(\bm{\alpha}^N)]_y) dydx \right\vert.
    \end{split}
\end{equation*}
From Lemma \ref{lemma:W_W_tilde}, we have $\left\Vert \Zb\psi^N(\bm{\alpha}^N)-\Zb^N\psi^N(\bm{\alpha}^N) \right\Vert_{L^2(I)} \to 0$. From Lemma \ref{lemma:A_convergence}, we have:
$$
    \sup_{i \in \{1,\dots,N \}}N \left\vert \int_{(i-1)/N}^{i/N}\int_I \left( w(x,y)-w^N(x,y) \right)\left[b(\psi^N(\bm{\alpha}^N)_y,[\Zb^N\psi^N(\bm{\alpha}^N)]_y)\right]dydx \right\vert \to 0,
$$
and we conclude.
\end{proof}

\begin{proof}[Proof of Theorem \ref{thm:epsilon_Nash}]
Since $\hat{\balpha}$ is a Nash equilibrium for the graphon game, from Proposition \ref{pr:Nash_equivalence_continuum}, we have:
\begin{equation*}
    \mathcal{J}(\hat{\alpha}_x,[\Zb\hat{\balpha}]_x)\leq \mathcal{J}(\beta,[\Zb\hat{\balpha}]_x), \quad \text{for a.e.\ }x, \forall \beta \in A,
\end{equation*}
where $\Zb$ is the aggregate operator induced by the graphon operator $\Wb$ associated to $w$, 
and thus,
\begin{equation}
\label{eq:Nash_mu_N}
\begin{split}
    &[\mu^N(\cJ(\hat{\alpha}_\cdot,[\Zb\hat{\balpha}]_\cdot)]_i -[\mu^N(\cJ(\beta,[\Zb\hat{\balpha}]_\cdot))]_i  \\
    &=N \int_{(i-1)/N}^{i/N} \Big(\cJ(\hat{\alpha}_x,[\Zb\hat{\balpha}]_x) -\cJ(\beta,[\Zb\hat{\balpha}]_x)) \Big) dx \leq 0, \quad \forall i \in \{1, \dots, N\}, \forall \beta \in A.
\end{split}
\end{equation}
Since $ \sqrt{c_z} \| \Wb \| < 1$ and $\lim_{N\to\infty}\| \Wb^N\|=\| \Wb\|$ by assumption, we can choose $N^*$ such that $ \sqrt{c_z} \| \Wb^N \| < 1$, and thus $\Zb^N$ is well defined, for $N \geq N^*$.
We compute:
\begin{equation}
\label{eq:epsilon_Nash_bound}
\begin{split}
    &\sup_{i \in \{1,\dots,N\}}\tilde\cJ([\mu^N(\hat{\balpha})]_i,\mathcal{L}(\xi_i,[\Zb_N \mu^N(\hat{\balpha})]_i))-\tilde\cJ(\beta,\mathcal{L}(\xi_i,[\Zb_N[\beta;\mu^N(\hat{\balpha})_{-i}]]_i))
    \\
    \leq &\sup_{i \in \{1,\dots,N\}} \left\vert \tilde\cJ([\mu^N(\hat{\balpha})]_i,\mathcal{L}(\xi_i,[\Zb_N \mu^N(\hat{\balpha})]_i)) - \cJ([\mu^N(\hat{\balpha})]_i,[\mu^N(\Zb^N \psi^N \mu^N(\hat{\balpha}))]_i) \right\vert \\
    & +\sup_{i \in \{1,\dots,N\}} \left\vert \cJ([\mu^N(\hat{\balpha})]_i,[\mu^N(\Zb^N \psi^N \mu^N(\hat{\balpha}))]_i) -\cJ([\mu^N(\hat{\balpha})]_i,[\mu^N(\Zb \psi^N \mu^N(\hat{\balpha}))]_i) \right\vert \\
    & +\sup_{i \in \{1,\dots,N\}} \cJ([\mu^N(\hat{\balpha})]_i,[\mu^N(\Zb \psi^N \mu^N(\hat{\balpha}))]_i) - [\mu^N(\cJ(\hat{\alpha}_\cdot,[\mu^N(\Zb \psi^N \mu^N(\hat{\balpha}))]_i)]_i \\
    & +\sup_{i \in \{1,\dots,N\}} \left\vert [\mu^N(\cJ(\hat{\alpha}_\cdot,[\mu^N(\Zb \psi^N \mu^N(\hat{\balpha}))]_i)]_i -[\mu^N(\cJ(\hat{\alpha}_\cdot,[\Zb \hat{\balpha}]_\cdot)]_i \right\vert \\
    & +\sup_{i \in \{1,\dots,N\}} [\mu^N(\cJ(\hat{\alpha}_\cdot,[\Zb \hat{\balpha}]_\cdot)]_i -[\mu^N(\cJ(\beta,[\Zb \hat{\balpha}]_\cdot))]_i \\
    & +\sup_{i \in \{1,\dots,N\}} \left\vert [\mu^N(\cJ(\beta,[\Zb \hat{\balpha}]_\cdot))]_i -\cJ(\beta,[\mu^N(\Zb \psi^N \mu^N(\hat{\balpha}))]_i) \right\vert \\
    & +\sup_{i \in \{1,\dots,N\}} \left\vert \cJ(\beta,[\mu^N(\Zb \psi^N \mu^N(\hat{\balpha}))]_i) -\cJ(\beta,[\mu^N(\Zb \psi^N [\beta;\mu^N(\hat{\balpha})_{-i}])]_i) \right\vert \\
    & +\sup_{i \in \{1,\dots,N\}} \left\vert \cJ(\beta,[\mu^N(\Zb \psi^N [\beta;\mu^N(\hat{\balpha})_{-i}])]_i) - \cJ(\beta,[\mu^N(\Zb^N \psi^N [\beta;\mu^N(\hat{\balpha})_{-i}])]_i) \right\vert \\
    & +\sup_{i \in \{1,\dots,N\}} \left\vert \cJ(\beta,[\mu^N(\Zb^N \psi^N [\beta;\mu^N(\hat{\balpha})_{-i}])]_i) -\tilde\cJ(\beta,\mathcal{L}(\xi_i,[\Zb_N[\beta;\mu^N(\hat{\balpha})_{-i}]]_i))\right\vert \\
    =:& (i) +(ii) +(iii) +(iv) +(v) +(vi) +(vii) + (viii) + (ix).
\end{split}
\end{equation}
From Assumption \ref{hyp:J_2}, the convexity of $\cJ$ in $\alpha$ implies $(iii) \leq 0$. From \eqref{eq:Nash_mu_N}, we have $(v) \leq 0$. For the remaining terms, we are going to repeatedly use Assumption \ref{hyp:tilde_J} in the same way as in the proof of Lemma \ref{lemma:convergence_controls}.

Now, consider $(i)$.
Since $\| w^N \|_2 \to \|w\|_2$ by assumption, there exists an $\tilde \epsilon$ such that $\sqrt{2c_z} (\| w \|_2 + \tilde \epsilon) <1$ and we can also choose $N^*$ such that for all $N \geq N^*$, $\left\vert \| W^N \|_F - \| w \|_2 \right\vert < \tilde \epsilon$. By Lemma \ref{lemma:W^N_and_W_N} applied to  $\bm{\alpha}^N= \mu^N(\hat{\balpha})$:
\begin{equation*}
\begin{split}
    (i) &\leq \sup_{i \in \{1,\dots,N\}}\tilde \ell_J \frac{1}{N^2}\sum_{j=1}^N (W^N_{i,j})^2 \left(\frac{2\mathbb{E}(\xi_0^2)}{1-2c_z \| W^N \|_F^2}\right) 
    \\
    & \leq  \frac{1}{N} \tilde \ell_J \tilde{\zeta} \left(\frac{2\mathbb{E}(\xi_0^2)}{1-2c_z (\| w \|_2 + \tilde \epsilon)^2}\right) \to 0.
\end{split}
\end{equation*}
Similarly, we also have $(ix) \to 0$. Next, consider $(ii)$:
\begin{equation*}
\begin{split}
    (ii) &\leq \sup_{i \in \{1,\dots,N\}}\tilde \ell_J \left\vert [\mu^N(\Zb^N \psi^N \mu^N(\hat{\balpha}))]_i - [\mu^N(\Zb \psi^N \mu^N(\hat{\balpha}))]_i \right\vert
    \\
    &= \tilde  \ell_J \sup_{i \in \{1,\dots,N\}} N \left\vert \int_{(i-1)/N}^{i/N}  \Big( [\Zb^N \psi^N \mu^N(\hat{\balpha})]_x - [\Zb \psi^N \mu^N(\hat{\balpha})]_x \Big) dx \right\vert \to 0,
\end{split}
\end{equation*}
where we used Lemma \ref{lemma:W_W_N} applied to  $\bm{\alpha}^N= \mu^N(\hat{\balpha})$. Similarly, we also have $(viii) \to 0$. Now consider, $(iv)$:
\begin{equation*}
\begin{split}
    (iv)& \leq \sup_{i \in \{1,\dots,N\}} \tilde \ell_J \left\vert [\mu^N(\Zb \psi^N \mu^N(\hat{\balpha}))]_i - [\mu^N(\Zb \hat{\balpha})]_i \right\vert \\
    &=\tilde \ell_J \sup_{i \in \{1,\dots,N\}}\left\vert [\mu^N( \Zb \psi^N \mu^N(\hat{\balpha}) -\Zb \hat{\balpha})]_i \right\vert \\
    &\leq \tilde \ell_J \sup_{i \in \{1,\dots,N\}}N \int_{(i-1)/N}^{i/N} \left\vert [\Zb \psi^N \mu^N(\hat{\balpha})]_x -[\Zb \hat{\balpha}]_x \right\vert dx \\
    & \leq \tilde \ell_J \sup_{i \in \{1,\dots,N\}}N \int_{(i-1)/N}^{i/N}  \frac{\sqrt{c_\alpha}}{1-\sqrt{c_z} \|\bW\|}\Bigl( \int_I w(x,y)^2dy\Bigr)^{1/2} \left\Vert \psi^N \mu^N(\hat{\balpha})-\hat{\balpha} \right\Vert_{L^2(I)} dx \\
    & \leq \frac{\tilde \ell_J \zeta \sqrt{c_\alpha}}{1-\sqrt{c_z} \|\bW\|} \left\Vert \psi^N \mu^N(\hat{\balpha})-\hat{\balpha} \right\Vert_{L^2(I)},
\end{split}
\end{equation*}
where we used Lemma \ref{lemma:W_Lip}. Let $\epsilon^*>0$ and let $\hat{\bbeta}$ be a continuous function approximating $\hat{\balpha}$ such that $\|\hat{\balpha} - \hat{\bbeta} \|_{L^2(I)} < \epsilon^*$. Since $\hat{\bbeta}$ is continuous, for $N$ large enough, we have $\|\hat{\bbeta} - \psi^N\mu^N(\hat{\bbeta}) \|_{L^2(I)} < \epsilon^*$. Now consider
\begin{equation*}
\begin{split}
    \|\psi^N\mu^N(\hat{\bbeta}) - \psi^N\mu^N(\hat{\balpha}) \|^2_{L^2(I)} & = \frac{1}{N} \sum_{i=1}^N ([\mu^N(\hat{\bbeta})]_i - [\mu^N(\hat{\balpha})]_i)^2 \\
    & = \frac{1}{N}\sum_{i=1}^N \left(N\int_{(i-1)/N}^{i/N}\hat{\beta}_x - \hat{\alpha}_x dx \right)^2 \\
    & \leq \frac{1}{N} \sum_{i=1}^N N\int_{(i-1)/N}^{i/N} (\hat{\beta}_x - \hat{\alpha}_x)^2 dx \\
    & = \int_I (\hat{\beta}_x - \hat{\alpha}_x)^2 dx \\
    & = \|\hat{\balpha} - \hat{\bbeta} \|^2_{L^2(I)} \\
    & \leq (\epsilon^*)^2.
\end{split}
\end{equation*}
Then we have $\left\Vert \psi^N \mu^N(\hat{\balpha})-\hat{\balpha} \right\Vert_{L^2(I)} \leq \|\hat{\balpha} - \hat{\bbeta} \|_{L^2(I)} + \|\hat{\bbeta} - \psi^N\mu^N(\hat{\bbeta}) \|_{L^2(I)} + \|\psi^N\mu^N(\hat{\bbeta}) - \psi^N\mu^N(\hat{\balpha}) \|_{L^2(I)} \leq 3 \epsilon^*$, and we conclude $(iv) \to 0$. Similarly, we also have $(vi) \to 0$. Finally, we consider $(vii)$:
\begin{equation*}
\begin{split}
    (vii)& \leq \tilde \ell_J \sup_{i \in \{1,\dots,N\}} \left\vert [\mu^N(\Zb \psi^N \mu^N(\hat{\balpha}))]_i - [\mu^N(\Zb \psi^N [\beta;\mu^N(\hat{\balpha})_{-i}])]_i \right\vert 
    \\
    & = \tilde \ell_J \sup_{i \in \{1,\dots,N\}}\left\vert [\mu^N\Big(\Zb \psi^N \mu^N(\hat{\balpha}) - \Zb \psi^N [\beta;\mu^N(\hat{\balpha})_{-i}]\Big)]_i \right\vert 
    \\
    & \leq \tilde \ell_J \sup_{i \in \{1,\dots,N\}}N \int_{(i-1)/N}^{i/N} \left\vert [\Zb \psi^N \mu^N(\hat{\balpha})]_x - [\Zb \psi^N [\beta;\mu^N(\hat{\balpha})_{-i}]]_x \right\vert dx  
    \\
    & \leq \tilde \ell_J \sup_{i \in \{1,\dots,N\}}N \int_{(i-1)/N}^{i/N}  \frac{\sqrt{c_\alpha}}{1-\sqrt{c_z} \|\bW\|}\Bigl( \int_I w(x,y)^2dy\Bigr)^{1/2} \left\Vert \psi^N \mu^N(\hat{\balpha}) - \psi^N [\beta;\mu^N(\hat{\balpha})_{-i}] \right\Vert_{L^2(I)} dx 
    \\
    & \leq  \frac{\tilde \ell_J \zeta \sqrt{c_\alpha}}{1-\sqrt{c_z} \|\bW\|} \cdot \sup_{i \in \{1,\dots,N\}} \frac{1}{\sqrt{N}} \left\vert \beta - [\mu^N(\hat{\balpha})]_i \right\vert^{1/2} 
    \\
    & \leq  \frac{\tilde \ell_J \zeta \sqrt{c_\alpha\text{diam}(A)} }{1-\sqrt{c_z} \|\bW\|} \frac{1}{\sqrt{N}} \to 0,
\end{split}
\end{equation*}
where we used Lemma \ref{lemma:W_Lip} and the boundedness of $A$. Thus, the left hand side of \eqref{eq:epsilon_Nash_bound} is bounded by an $\epsilon_N$ with $\epsilon_N \to 0$ as $N \to \infty$.
\end{proof}

\begin{remark}
Let us stress the significance of Propositions \ref{pr:link_constant_MFG}, \ref{pr:link_MFG}, and \ref{pr:link_K_MFG} in conjunction with Theorems \ref{thm:convergence} and \ref{thm:epsilon_Nash}. For a sequence of $N$-player games with wildly asymmetric players, as long as the limiting graphon $w$ is a constant connection strength graphon it is sufficient to consider the analogous MFG and only consider a single representative player to construct an $\epsilon$-Nash equilibrium for the $N$-player game, while if the limiting graphon $w$ is a piecewise constant graphon it is sufficient to consider the analogous $K$-population MFG and only consider a single representative player from each population to construct an $\epsilon$-Nash equilibrium for the $N$-player game.
\end{remark}

\section{Revisiting the Motivating Applications Introduced Earlier} 
\label{sec:revisiting_examples}

\subsection{Where do I put my towel on the beach?}
\label{sub:revising_beach}
For the sake of illustration, we assume that the state space (i.e. the \emph{beach}) and the set of actions, namely the locations where the players can put their towels are the real line $\RR$.

\vskip 6pt
We first treat the case of a population of $N$ individuals, and we prove existence of Nash equilibria in pure strategies. Each individual $i \in \{1,\dots,N\}$ chooses a location $\alpha_i \in A=\RR$ on the beach and hands their towel to a beach attendant after telling the location of their choice, and since the attendant does not want to place towels directly on top of each other, the actual position of the towel ends up being a noisy version of their choice. The noise is represented by a mean-zero real-valued random variable with distribution $\mu_0$. In order to use notations similar to those of the paper we write:
$$
    b(\alpha,z) = \alpha
    \qquad\text{and}\qquad
    X_{\alpha,\xi}=\alpha+\xi.
$$
Let us assume that the location of the concession stand is $1$, and that players $i$ and $j$ \emph{interact} with weight 
$W_{i,j}\in \RR$. For example, one could think of $W_{i,j}=1$ if individuals $i$ and $j$ are friends, $W_{i,j}=0$ otherwise, and $W_{i,i}=0$ for all $i=1,\dots,N$. We denote by $W$ the $N\times N$ matrix of the $W_{i,j}$. So player $i$ could choose their location in order to minimize the cost:
\begin{equation*}
    J_i(\bm{\alpha})=\mathbb{E}\Bigl[(\alpha_i)^2+(X_{\alpha_i,\xi_i}-1)^2+ \Bigl(X_{\alpha_i,\xi_i}-\frac{1}{N} \sum_{j=1}^N W_{i,j} X_{\alpha_j,\xi_j}\Bigr)^2 \Bigr],
\end{equation*}
corresponding to the function:
$$
f(x,\alpha,z) = \alpha^2 + (x-1)^2 + (x - z)^2.
$$
We could specify the relative importance of the three components of the cost by including coefficients. We set them to $1$ for the sake of illustration. Using the notation of Section \ref{sec:static_N} and the fact that $\int \xi\,\nu(d\xi,dz)=0$, we have:
\begin{equation*}
\begin{split}
    \tilde\cJ(\alpha,\nu)&=3\alpha^2-2\alpha\Bigl(1+ \int_{\mathbb{R} \times \mathbb{R}}z \nu(d\xi,dz)\Bigr)
    +\int_{\mathbb{R} \times \mathbb{R}}[(\xi-1)^2+(\xi-z)^2] \nu(d\xi,dz).
\end{split}
\end{equation*}
For a given $\nu$, this function is minimized by:
$$
\hat\alpha=\frac13\Bigl(1+ \int_{\mathbb{R} \times \mathbb{R}}z \nu(d\xi,dz)\Bigr).
$$
Given the special form of the function $b$, the aggregate is explicitly given by:
$$
[\bZ\balpha]_i=\frac1N\sum_{j=1}^N W_{i,j}(\alpha_j + \xi_j ).
$$
Consequently, a strategy profile $\hat\balpha$ is a Nash equilibrium if and only if
   \begin{equation*}
   \begin{split}
   \hat{\alpha}_i&=\frac{1}{3}\Bigl(1+ \EE \Bigl[ \bigl[\bZ\hat\balpha]_i\bigr]\Bigr]\Bigr), \\
   &=\frac{1}{3}\Bigl(1+  \frac{1}{N}\sum_{j=1}^N W_{i,j}\hat{\alpha}_j \Bigr), \qquad i=1,\cdots,N
   \end{split}
    \end{equation*}
which can be rewritten in matrix form as:
$$
\hat\balpha=\frac13\bigl(\mathbf{1}_N+\frac1N W \hat\balpha\bigr),
$$
where $\mathbf{1}_N$ denotes the $N$-vector of all ones. So for this explicit model we proved the following:

\begin{proposition}
If $3N$ is not an eigenvalue of the matrix $W=[W_{i,j}]_{i,j=1,\cdots,N}$ then the unique Nash equilibrium in pure strategies is given by:
\begin{equation} 
\label{eq:explicit_particular_N}
 \hat\balpha=\frac13[\bI_N-\frac1{3N} W]^{-1}\mathbf{1}_N
 \end{equation}
where $\bI_N$ is the $N \times N$ identity matrix and $\mathbf{1}_N$ is the $N$-dimensional vector of all ones.
\end{proposition}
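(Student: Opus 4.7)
The plan is to observe that essentially all the work has already been carried out in the paragraphs preceding the statement: the search for a Nash equilibrium in pure strategies has been reduced to solving an $N\times N$ linear system, and the displayed formula is nothing but its explicit solution. So the proof is a short linear-algebra step, after a small verification that the fixed-point equation derived in the text characterizes Nash equilibria (and not merely necessary conditions).

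First I would confirm that, for any fixed $\nu\in\cK$, the quadratic polynomial $\alpha\mapsto \tilde\cJ(\alpha,\nu) = 3\alpha^2 - 2\alpha\bigl(1+\int z\,\nu(d\xi,dz)\bigr) + (\text{const in }\alpha)$ has leading coefficient $3>0$, hence is strictly convex. Consequently the vanishing of its derivative gives the unique global minimizer $\hat\alpha = \tfrac13\bigl(1+\int z\,\nu(d\xi,dz)\bigr)$, so the first-order condition displayed in the text is both necessary and sufficient for a best response. Combining this with the explicit formula $[\bZ\balpha]_i=\tfrac{1}{N}\sum_{j=1}^N W_{i,j}(\alpha_j+\xi_j)$ (valid because $b(\alpha,z)=\alpha$ does not depend on $z$, so the fixed-point equation defining $\bZ$ collapses to this explicit form) and the mean-zero property $\EE[\xi_j]=0$, we obtain that $\hat\balpha$ is a Nash equilibrium if and only if
$$
\hat\alpha_i=\frac{1}{3}\Bigl(1+\frac{1}{N}\sum_{j=1}^N W_{i,j}\hat\alpha_j\Bigr),\qquad i=1,\dots,N.
$$

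Next I would rewrite this system in matrix form as
$$
\Bigl(\bI_N-\frac{1}{3N}W\Bigr)\hat\balpha=\frac{1}{3}\mathbf{1}_N.
$$
The matrix $\bI_N-\tfrac{1}{3N}W$ is invertible if and only if $1$ does not belong to the spectrum of $\tfrac{1}{3N}W$, i.e.\ if and only if $3N$ is not an eigenvalue of $W$, which is precisely the hypothesis of the proposition. Under this hypothesis the linear system has the unique solution $\hat\balpha=\tfrac13[\bI_N-\tfrac{1}{3N}W]^{-1}\mathbf{1}_N$, which is both the existence and the uniqueness claim of the proposition.

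There is no substantive obstacle here: the only point to watch is that invertibility of $\bI_N-\tfrac{1}{3N}W$ is equivalent to the stated eigenvalue condition on $W$ (and not, say, a condition on $\tfrac{1}{N}W$), which is immediate upon rescaling. Everything else is a direct consequence of the strict convexity of the quadratic cost and the affine dependence of the aggregate $[\bZ\balpha]_i$ on $\balpha$.
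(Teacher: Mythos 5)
Your proof is correct and takes essentially the same route as the paper: reduce the best-response condition of the strictly convex quadratic cost $\tilde\cJ(\cdot,\nu)$ to the linear fixed-point system $\hat\balpha=\tfrac13\bigl(\mathbf{1}_N+\tfrac1N W\hat\balpha\bigr)$ and invert, with the eigenvalue hypothesis guaranteeing invertibility. The one point left implicit in both your argument and the paper's is that the ``if and only if'' relies on $W_{i,i}=0$ (cf.\ Proposition \ref{pr:Nash_equivalence_N}), so that a unilateral deviation by player $i$ does not change the law of their own aggregate and the measure $\nu$ may indeed be held fixed in the minimization.
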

\begin{remark}
The above result shows that the strategy profile of the unique Nash equilibrium of the game is up to the additive constant $1$, the Katz centrality measure of the interaction graph of the players. See \cite{katz1953new}.
\end{remark}

\begin{remark}
    Note that the assumption is automatically satisfied if all the entries $W_{i,j}$ are in the interval $[0,1]$. Indeed, in this case we have:
 \begin{equation*}
\begin{split}
\|W\|=\max_{\{\balpha \in A^N: \|\balpha\|_2=1\}}\sqrt{\sum_{i=1}^N \left(\sum_{j=1}^N W_{i,j}\alpha_j \right)^2}
    &\leq \max_{\{\balpha \in A^N: \|\balpha\|_2=1\}}\sqrt{\sum_{i=1}^N \sum_{j=1}^N (W_{i,j})^2 \sum_{k=1}^N (\alpha_k)^2}\\
    &=\sqrt{\sum_{i=1}^N \sum_{j=1}^N (W_{i,j})^2}\leq N,
\end{split}
\end{equation*}
implying that $3N$ cannot be an eigenvalue since the absolute value of an eigenvalue has to be smaller than $\|W\|$.  
\end{remark}

\vskip 6pt
We now consider the game model with a continuum of players. As before we assume $A=\RR$. The state of player $x\in I$ when all the players use the controls given by $\balpha\in \mathbb{A}$ is given by:
\begin{equation*}
    X_{\alpha_x,\xi_x}(\omega)=\alpha_x+\xi_x(\omega),
\end{equation*}
and they face the cost:
\begin{equation*}
    J_{x}(\alpha,\balpha):=\mathbb{E} \left[\alpha^2+(X_{\alpha,\xi_0}-1)^2+ \left(X_{\alpha,\xi_0}-\int_I  w(x,y) X_{\alpha_y,\xi_y} \lambda(dy) \right)^2 \right]
\end{equation*}
as given by the same cost function as before. Note that in this form of the cost, we need to compute the aggregate using the measure $\lambda(dy)$ appearing in the rich Fubini extension. However, after developing the square and computing the expectation, we see that the computation of the cost only involves the classical Lebesgue measure $\lambda_I(dy)=dy$.
As before, for the sake of simplicity, we set to one the coefficients of the three terms in the cost. Using the notation of the paper, the cost is given by the function:
\begin{equation*}
    \mathcal{J}(\alpha,z)=3\alpha^2+z^2-2\alpha(1+ z)+1+2\EE[\xi^2_0].
\end{equation*}
For $z\in\RR$ fixed, this function is minimized for $\hat\alpha=\frac13(1+z)$ and $\hat\balpha=(\hat\alpha_x)_{x\in I}$
is a Nash equilibrium for the graphon game if and only if
\begin{equation*}
    \hat{\alpha}_x=\frac{1}{3}\Bigl(1+\int_I w(x,y)\hat{\alpha}_y dy\Bigr), \qquad \lambda_I-a.e.  x \in I.
\end{equation*}
Writing this condition in terms of the graphon operator we get:
\begin{equation*}
    \hat{\balpha}=\frac{1}{3}\Bigl(\mathbf{1}+\bW\hat{\balpha}\Bigr)
 \end{equation*}
which gives
\begin{equation*}
    \hat{\balpha}=\frac{1}{3}[\bI-\frac{1}{3}\bW]^{-1}\mathbf{1},
\end{equation*}
provided the above inverse operator exists. So we proved that as in the case of finitely many players, the strategy profile of the unique Nash equilibrium is given by the Katz centrality measure of the graphon \cite{katz1953new}.

\begin{proposition}
\label{pr:particular_continuum}
If the graphon $w$ is such that $\|\bW\|<3$,  the unique Nash equilibrium $\hat{\balpha}$ for the graphon game is given by:
    \begin{equation} 
    \label{eq:explicit_particular_continuum}
        \hat{\balpha}=\frac{1}{3}\left(\bI-\frac{1}{3}\Wb\right)^{-1}\mathbf{1}.
    \end{equation}
\end{proposition}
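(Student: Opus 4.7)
The plan is to combine the general uniqueness/existence result (Proposition \ref{pr:graphon_uniqueness}) with the explicit first-order characterization of Nash equilibria from Proposition \ref{pr:Nash_equivalence_continuum}, and then recognize that the resulting fixed-point equation is a linear equation which can be inverted via a Neumann series under the assumption $\|\Wb\| < 3$.

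First I would check that the hypotheses of Proposition \ref{pr:graphon_uniqueness} hold. Since $b(\alpha,z) = \alpha$ we may take $c_\alpha = 1$ and $c_z = 0$, so Assumption \ref{hyp:b} is satisfied and $\sqrt{c_z}\|\Wb\| = 0 < 1$ trivially. The cost function $\cJ(\alpha,z) = 3\alpha^2 + z^2 - 2\alpha(1+z) + 1 + 2\EE[\xi_0^2]$ has $\partial_\alpha \cJ(\alpha,z) = 6\alpha - 2(1+z)$, which is strongly convex in $\alpha$ with $\ell_c = 6$ and Lipschitz in $z$ uniformly in $\alpha$ with $\ell_J = 2$, so Assumption \ref{hyp:J_2} holds. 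The uniqueness condition \eqref{fo:existence+uniqueness} becomes
$$
\frac{\ell_J}{\ell_c} \cdot \frac{\sqrt{c_\alpha}\|\Wb\|}{1 - \sqrt{c_z}\|\Wb\|} = \frac{\|\Wb\|}{3} < 1,
$$
which is exactly the hypothesis. Hence Proposition \ref{pr:graphon_uniqueness} yields existence and uniqueness of a Nash equilibrium $\hat{\balpha}$.

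Next, I would use Proposition \ref{pr:Nash_equivalence_continuum} to turn the equilibrium condition into an explicit equation. For each fixed $z \in \RR$, the map $\alpha \mapsto \cJ(\alpha,z)$ is minimized uniquely at $\alpha = \frac{1}{3}(1+z)$, so $\hat{\balpha}$ being a Nash equilibrium is equivalent to
$$
\hat{\alpha}_x = \tfrac{1}{3}\bigl(1 + [\Zb\hat{\balpha}]_x\bigr) \qquad \text{for } \lambda_I\text{-a.e. } x \in I.
$$
Because $b(\alpha,z) = \alpha$ does not depend on $z$, the identity in Remark \ref{re:z=w} simplifies to $\Zb\hat{\balpha} = \Wb\hat{\balpha}$, so the fixed-point equation reduces in $L^2(I)$ to the linear identity
$$
\bigl(\bI - \tfrac{1}{3}\Wb\bigr)\hat{\balpha} = \tfrac{1}{3}\mathbf{1}.
$$

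Finally, I would invert this equation. Since $\|\tfrac{1}{3}\Wb\| = \|\Wb\|/3 < 1$ by hypothesis, the Neumann series $\sum_{k\ge 0} 3^{-k}\Wb^k$ converges in operator norm on $L^2(I)$, giving a bounded inverse of $\bI - \tfrac{1}{3}\Wb$. Applying this inverse to $\tfrac{1}{3}\mathbf{1}$ yields \eqref{eq:explicit_particular_continuum}. Existence has already been guaranteed by Proposition \ref{pr:graphon_uniqueness}, but one also notes that the function defined by \eqref{eq:explicit_particular_continuum} clearly lies in $L^2(I)$ and satisfies the fixed-point relation, so it is the unique equilibrium. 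There is no real obstacle here — the only subtlety is recognizing that the vanishing of $c_z$ eliminates the nonlinearity in the aggregate equation, which is what allows the equilibrium to be written in closed form rather than merely as a contraction fixed point.
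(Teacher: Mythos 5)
Your proof is correct and follows essentially the same route as the paper: characterize the equilibrium via the pointwise best response $\hat\alpha=\tfrac13(1+z)$, use $b(\alpha,z)=\alpha$ to reduce the aggregate to $\Zb\hat\balpha=\Wb\hat\balpha$, and invert the resulting linear equation by a Neumann series under $\|\Wb\|<3$. The only (harmless) difference is that you additionally verify the hypotheses of Proposition \ref{pr:graphon_uniqueness} with $\ell_c=6$, $\ell_J=2$, $c_\alpha=1$, $c_z=0$, whereas the paper obtains existence and uniqueness directly from the invertibility of $\bI-\tfrac13\Wb$.
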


\begin{remark}
Consider a sequence of symmetric matrices $W^N$, $N\geq 1$, and let $\balpha^{N,*}$ be the vector of strategies given by equation \eqref{eq:explicit_particular_N} with the matrix $W^N$. Then we have $\psi^N(\balpha^{N,*}) =\frac{1}{3}\left(\bI-\frac{1}{3}\Wb^N\right)^{-1}\mathbf{1}$, where $\Wb^N$ is the graphon operator corresponding to the evenly spaced piecewise constant graphon $w^N:=\Psi^N(W^N)$. Then we notice that the conclusion of Theorem \ref{thm:convergence}, namely $\lim_{N\to\infty}d_{S}(\psi^N(\balpha^{N,*}),\hat{\balpha}) = 0$, holds under the weaker assumption that the resolvent, $\left(\bI-\frac{1}{3}\Wb^N\right)^{-1}$, converges to $\left(\bI-\frac{1}{3}\Wb\right)^{-1}$ for some graphon $w$.
\end{remark}

We now proceed to the computation of the Nash equilibrium $\balpha$ for some of the most commonly used graphon models.
The form \eqref{eq:explicit_particular_continuum} of the equilibrium strategy requires the computation of the resolvent $[\bI-\theta\bW]^{-1}\phi$ for a  real number $\theta$ such that $\|\theta\bW\|<1$ and a function $\phi$ which, in most of the actual applications we consider, will be the constant function $\mathbf{1}$. We shall use two different methods to compute this resolvent.
\begin{enumerate}
\item Since $\|\theta\bW\|<1$, the resolvent operator is given by its convergent Taylor expansion, namely:
\begin{equation}
\label{fo:Taylor_expansion}
[\bI -\theta \Wb]^{-1} = \bI + \theta \Wb + \theta^2 \Wb^2 + \cdots\;.
\end{equation}
\item Since $\bW$ is a symmetric Hilbert-Schmidt operator, there exists a complete orthonormal basis of $L^2(I)$, say $\{\phi_k\}_{k\ge 1}$ and a square summable sequence of real numbers $\{\lambda_k\}_{k\ge 1}$ such that $\bW\phi_k=\lambda_k\phi_k$ for $k\ge 1$. Recall that $L^2(I)$ is the classical Lebesgue space over the unit interval. Accordingly, for all $\phi\in L^2(I)$:
\begin{equation*}
[\bI-\theta\bW]^{-1}\phi=\sum_{k\ge 1}\frac{<\phi,\phi_k>}{1-\theta\lambda_k}\phi_k.
\end{equation*}
This eigenvalue expansion of the resolvent will be useful when we can identify the eigenvalues and the orthonormal basis of eigenfunctions via explicit formulas.
\end{enumerate}
\begin{itemize}
\item \emph{Constant Connection Strength Graphon}: for this graphon model,  $\int_I w(x,y)dy=a$ for some $a\in\RR$ and all $x \in I$.
Notice that in this case, $\bW\mathbf{1}=a\mathbf{1}$, so that $\bW^n\mathbf{1}=a^n\mathbf{1}$ for each integer $n\ge 1$. Hence the Taylor expansion \eqref{fo:Taylor_expansion} gives:
\begin{equation*}
\begin{split}
[\bI - \frac{1}{3} \Wb]^{-1} \mathbf{1}&=\mathbf{1}+\frac{a}3\mathbf{1}+\bigl(\frac{a}3\bigr)^2\mathbf{1}+\bigl(\frac{a}3\bigr)^3\mathbf{1} + \cdots\\
&=\frac3{3-a}\mathbf{1}.
\end{split}
\end{equation*}
showing that the Nash equilibrium is given by:
$$
\hat\balpha=\frac1{3-a}\mathbf{1}.
$$
\item \emph{Piecewise Constant Graphon}: Here we consider a graphon of the form $w(x,y)=a_1\cdot 1_{\{x,y \leq x^*\}}+a_2 \cdot 1_{\{x \leq x^*,y > x^*\}}+a_2\cdot 1_{\{x > x^*,y \leq x^*\}} + a_3 \cdot 1_{\{x,y > x^*\}}$ for some constants $x^* \in I, a_1,a_2,a_3 \in \mathbb{R}$. From Proposition \ref{pr:link_K_MFG}, we expect the unique Nash equilibrium to have the form $\hat{\alpha}_x= c_1 \cdot 1_{\{x \leq x^*\}}+c_2 \cdot 1_{\{x > x^*\}}$. By plugging this control profile directly in \eqref{eq:explicit_particular_continuum}, we find that $c_1$ and $c_2$ must solve the system:
\begin{equation*}
    \begin{split}
         (3-a_1x^*)c_1-a_2(1-x^*)c_2&=1, \\
         -a_2x^*c_1+(3-a_3(1-x^*))c_2&=1,
    \end{split}
\end{equation*}
which has a unique solution when $(3-a_1x^*)(3-a_3(1-x^*))-|a_2|^2(1-x^*) x^* \neq 0$.
 
\item \emph{Power-Law Graphon}: Here we consider the power-law graphon introduced by Medvedev and Tang \cite{medvedev2018kuramoto}, which was used to study synchronization of coupled oscillators. It is given by the formula $w(x,y)=(xy)^{-\gamma}$ for some $\gamma \in (0,\frac{1}{3})$. 
Note that this function $w$ is symmetric and square-integrable, in accordance with our definition of a graphon. Since we consider $\gamma \in (0,\frac{1}{3})$, we have:
\begin{equation*}
    \begin{split}
        \|\bW\|&\leq \sqrt{\int_I \int_I(w(x,y))^2dy  dx} \\
        & =\sqrt{\int_I \int_I(x y)^{-2\gamma}dy  dx}\\
        & = \frac{1}{1-2\gamma} \\
        & <3,
    \end{split}
\end{equation*}
showing that the assumption of Proposition \ref{pr:particular_continuum} is satisfied.
In the present situation 
$$ 
[\Wb \mathbf{1}]_x = \int_I x^{-\gamma} y^{-\gamma} d y = x^{-\gamma} \int_I y^{-\gamma} d y = \frac{1}{1-\gamma} x^{-\gamma},  
$$
from which it is easy to see by induction that:
$$
[\Wb^n \mathbf{1}]_x = \frac{1}{(1 - \gamma)(1 - 2 \gamma)^{n-1}} x^{-\gamma},
$$
for $n\ge 1$. Indeed, if this formula is true for $n\ge 2$, then:
\begin{equation*}
\begin{split} 
[\Wb^{n+1}\mathbf{1} ]_x = [\Wb [\Wb^{n} \mathbf{1}] ]_x &=  \int_I x^{-\gamma}y^{-\gamma}\frac{1}{(1 - \gamma)(1 - 2 \gamma)^{n-1}}y^{-\gamma}\;dy\\
&=\frac{1}{(1 - \gamma)(1 - 2 \gamma)^{n-1}}x^{-\gamma}\int_I y^{-2\gamma}\;dy\\
&=\frac{1}{(1 - \gamma)(1 - 2 \gamma)^{n}}x^{-\gamma}. 
\end{split}
\end{equation*} 
Plugging this in the Taylor expansion \eqref{fo:Taylor_expansion} we get:
\begin{equation*}
\bigl[[\bI -\theta \Wb ]^{-1} \mathbf{1}\bigr]_x =1+\theta\frac{x^{-\gamma}}{1-\gamma}\Bigl(1+\frac{\theta}{1-2\gamma}+\frac{\theta^2}{(1-2\gamma)^2}+\cdots\Bigr)
=1+\frac{\theta(1-2\gamma)}{(1-\gamma)(1-2\gamma-\theta)}x^{-\gamma}
\end{equation*}
from which we conclude that the unique Nash equilibrium is given by the formula:
$$ 
\hat{\alpha}_x =\frac13 +\frac{1-2\gamma}{6(1-\gamma)(1-3\gamma)}x^{-\gamma}. 
$$
This is an example in which, in equilibrium, different players take different actions, though depending upon a power of their label.

\item \emph{Min-Max Graphon}: If $w(x,y)=\min(x,y) [1-\max(x,y)]$, it is known that an orthonormal basis of eigenfunctions is given by
$\phi_k(\cdot) = \sqrt{2} \sin(\pi k \cdot)$ with corresponding eigenvalues $\lambda_k = \frac{1}{\pi^2 k^2}$ for $k \ge1$. See
\cite{avella2018centrality}. Accordingly,
$$
[\bI-\frac13\bW]^{-1}\mathbf{1}=\sum_{k\ge 1}\frac{<\mathbf{1},\phi_k>}{1-\lambda_k/3}\phi_k.
$$
Notice that
$$
<\mathbf{1},\phi_k>=\int_I\phi_k(x)dx=\sqrt{2}\int_I\sin (k\pi x)\;dx=
\begin{cases}
\frac{2\sqrt{2}}{(2m+1)\pi}&if \;\;k=2m+1\\
0& if \;\;k=2m.
\end{cases}
$$
Therefore,
$$
\bigl[[\bI-\frac13\bW]^{-1}\mathbf{1}\bigr]_x=12\pi\sum_{m\ge 0}\frac{2m+1}{3\pi^2(2m+1)^2-1} \sin(\pi (2m+1)x)
$$
which yields
\begin{equation*}
    \hat{\alpha}_x = 4 \pi \displaystyle\sum_{k \: \mathrm{odd}}  \frac{k}{3 \pi^2 k^2 - 1} \sin\left(\pi k x\right).
\end{equation*}

\item \emph{Simple Threshold Graphon}: If $w(x,y)=1_{\{x+y \leq 1\}}$, then 
the unique Nash equilibrium $\hat{\balpha}$ solves:
\begin{equation}\label{eq:simple_threshold}
    3\hat{\alpha}_x-\int_0^{1-x}\hat{\alpha}_y dy =1.
\end{equation}
We make an ansatz that $\hat{\balpha}$ takes the following form:
\[ \hat{\alpha}_x = c \left[ \cos\left( \frac{1-x}{3} \right) - \sin\left(\frac{x}{3} \right) \right], \]
and we compute:
\[ \int_0^{1-x} \hat{\alpha}_y dy = 3c \left[\left( \cos\left( \frac{1-x}{3} \right) - \sin\left(\frac{x}{3} \right)  \right) + \left( \sin\left( \frac{1}{3} \right) - 1 \right) \right]. \]
Thus, one can check that equation \eqref{eq:simple_threshold} is satisfied for the above strategy profile when $c = \frac{1}{3 \left[1 -  \sin\left(\frac{1}{3}\right)\right]}$.
\end{itemize}

\subsection{Cities Game}
\label{sub:cities_game}
Here we directly consider the game with a continuum of players. In this game model, each player $x\in I$ uses control $\alpha_{x}$ and the frequency they actually go to the city center is given by:
\begin{equation*}
X_{\alpha_x,\xi_x} = \alpha_{x} + \xi_{x},\qquad x\in I,
\end{equation*}
and they seek to minimize the cost:
\begin{equation*}
J_{x}({\balpha}) = \mathbb{E} \left[ \frac{1}{2} \alpha_{x}^2 -k \alpha_{x} - \theta \alpha_{x} \int_I w(x,y) X_{\alpha_y,\xi_y} d y  \right].
\end{equation*}
So using the notation of the paper, we have:
$$
b(\alpha,z)=\alpha, 
\qquad \text{and}\qquad
f(x,\alpha,z)=\frac{1}{2}\alpha^2-k\alpha-\theta \alpha z,
$$
where $k \alpha$ represents the intrinsic benefit of effort level $\alpha$, $\frac{1}{2} \alpha^2$ is the cost of taking the action, and $\theta$ measures the relative impact of the complementary effects of social interactions for $k, \theta>0$, $z$ being the aggregate frequency of the city visits by all the individuals.

\begin{proposition} 
\label{pr:cityNE} 
If $\theta \|\bW\|<1$, the unique Nash equilibrium $\hat{\balpha}$ for the city graphon game is given by 
\begin{equation*}
\hat{\balpha} =k[ \bI - \theta \Wb ]^{-1}\mathbf{1}. 
\end{equation*}
\end{proposition}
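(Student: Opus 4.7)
The plan is to apply Proposition \ref{pr:graphon_uniqueness} to get existence and uniqueness, and then use the first-order condition from Proposition \ref{pr:Nash_equivalence_continuum} to derive the explicit closed form. First I would unpack the cost structure: since $b(\alpha,z)=\alpha$ and $f(x,\alpha,z)=\tfrac12\alpha^2-k\alpha-\theta\alpha z$, and since $\EE[\xi_x]=0$, one checks directly that $\cJ(\alpha,z) = \tfrac12\alpha^2-k\alpha-\theta\alpha z$.

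Next I would verify all the hypotheses of Proposition \ref{pr:graphon_uniqueness}. Assumption \ref{hyp:b} holds with $c_\alpha=1$ and $c_z=0$ (so $\sqrt{c_z}\|\bW\|<1$ is automatic). For Assumption \ref{hyp:J_2}, the map $\alpha\mapsto \cJ(\alpha,z)$ is strongly convex with modulus $\ell_c=1$, and $\partial_\alpha \cJ(\alpha,z)=\alpha-k-\theta z$ is Lipschitz in $z$ uniformly in $\alpha$ with constant $\ell_J=\theta$. The uniqueness condition \eqref{fo:existence+uniqueness} then reduces exactly to
\[
\frac{\ell_J}{\ell_c}\cdot\frac{\sqrt{c_\alpha}\|\bW\|}{1-\sqrt{c_z}\|\bW\|}=\theta\|\bW\|<1,
\]
which is precisely our assumption. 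Hence there exists a unique Nash equilibrium $\hat\balpha\in\AA$.

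To identify it explicitly, I would invoke Proposition \ref{pr:Nash_equivalence_continuum}: $\hat\balpha$ is characterized, for $\lambda_I$-a.e. $x\in I$, as the unique minimizer of $\alpha\mapsto \cJ(\alpha,[\Zb\hat\balpha]_x)$. The first-order condition gives
\[
\hat\alpha_x = k+\theta[\Zb\hat\balpha]_x,\qquad \lambda_I\text{-a.e. } x\in I.
\]
Because $b(\alpha,z)=\alpha$ depends only on $\alpha$, Remark \ref{re:z=w} yields $\Zb\hat\balpha=\Wb[b(\hat\balpha,\Zb\hat\balpha)]=\Wb\hat\balpha$. Substituting this back gives the linear fixed-point identity $\hat\balpha=k\mathbf{1}+\theta\Wb\hat\balpha$, i.e. $(\bI-\theta\Wb)\hat\balpha=k\mathbf{1}$.

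Finally, since $\theta\|\Wb\|<1$, the Neumann series $\sum_{n\ge0}(\theta\Wb)^n$ converges in operator norm, showing that $\bI-\theta\Wb$ is invertible on $L^2(I)$. Solving gives $\hat\balpha=k(\bI-\theta\Wb)^{-1}\mathbf{1}$, which is the claimed formula. There is no real obstacle here beyond carefully matching the Lipschitz constants $(c_\alpha,c_z,\ell_c,\ell_J)$ so that the abstract hypothesis \eqref{fo:existence+uniqueness} collapses to the clean condition $\theta\|\Wb\|<1$; the rest is linear algebra.
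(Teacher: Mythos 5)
Your proof is correct and follows essentially the same route as the paper: derive the first-order condition $\hat\alpha_x=k+\theta[\Zb\hat\balpha]_x$, use $b(\alpha,z)=\alpha$ to reduce $\Zb\hat\balpha$ to $\Wb\hat\balpha$, and invert $\bI-\theta\Wb$ via the Neumann series. Your explicit verification that $(c_\alpha,c_z,\ell_c,\ell_J)=(1,0,1,\theta)$ collapses condition \eqref{fo:existence+uniqueness} to $\theta\|\bW\|<1$ is a welcome addition that the paper leaves implicit.
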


\begin{proof} In this model,
$$
\mathcal{J}(\alpha,z) =  \frac{1}{2} \alpha^2 -k \alpha - \theta \alpha z, $$
whose minimum in $\alpha$ is attained for $\alpha=k+\theta z$.
So $\balpha$ is a Nash equilibrium if and only if
$$
\hat\alpha_x=k+\theta[\bZ\balpha]_x=k+\theta\int_Iw(x,y)\hat\alpha_y \;dy,\qquad \lambda_I \; a.e.\ x\in I,
$$
so using the definition of $\Zb$, we have:
$$
\hat{\balpha} - \theta \Wb \hat{\balpha} = k \mathbf{1}, 
$$
which proves the desired result since $\bI - \theta \Wb $ is invertible by assumption. 
\end{proof}

\subsubsection*{The Price of Anarchy}
Notice that:
\begin{equation*}
    \begin{split}
        \mathcal{S}(\hat{\balpha})&=\int_I\mathcal{J}\left(\hat{\alpha}_x,\frac{\hat{\alpha}_x-k}{\theta}\right)dx \\
        &=\int_I \Bigl(\frac{1}{2}\hat{\alpha}_x^2-k \hat{\alpha}_x-\theta \hat{\alpha}_x \cdot \frac{\hat{\alpha}_x-k}{\theta}\Bigr)dx \\
        &=-\frac{1}{2} \int_I \hat{\alpha}^2_x dx,
    \end{split}
\end{equation*}
from which we deduce that the social cost for the unique Nash equilibrium for the city graphon game is given by:
\begin{equation*}
    \mathcal{S}(\hat{\balpha})=-\frac{k^2}{2} \Big\Vert [I - \theta \Wb ]^{-1} \mathbf{1} \Big\Vert^2_{L^2(I)}
    =-\frac{k^2}{2} < [I - \theta \Wb ]^{-2} \mathbf{1},\mathbf{1}>.
\end{equation*}
The form of the model allows us to investigate 
the central planner optimization of the social cost as explained in Section \ref{sub:graphon_control}.
Note that for any admissible strategy profile $\balpha$ we have:
\begin{eqnarray} 
\label{eq:citysocialwelfare} 
\mathcal{S}\left(\balpha \right) = \int_I J_{x}({\balpha}) dx& =& \int_I \left(\frac{1}{2} \alpha_{x}^2 - k \alpha_{x} \right) dx - \theta \int_I \int_I \alpha_{x} w(x,y)\alpha_y dy dx\nonumber\\
&=&\frac12\|\balpha\|^2 - k<\balpha,\mathbf{1}>-\theta<\balpha,\bW\balpha>\nonumber\\
&=&\frac12<\balpha,[\bI-2\theta\bW]\balpha -2k\mathbf{1}>
\end{eqnarray}
where we used the notation $<\cdot,\cdot>$ for the $L^2(I)$ inner product. We have:
$$
\frac{d \cS(\balpha)}{d\balpha}=[\bI-2\theta\bW]\balpha-k\mathbf{1}
$$
and if $2\theta\|\bW\|<1$, 
$$
\balpha^O=k[\bI-2\theta\bW]^{-1}\mathbf{1}
$$ 
is the argument of the minimum and plugging this value in \eqref{eq:citysocialwelfare} we find that the optimal social cost is given by:
\begin{equation*}
    \mathcal{S}(\balpha^O)=-\frac{k^2}{2}< [I - 2\theta \Wb ]^{-1} \mathbf{1},\mathbf{1}>
\end{equation*}
and the price of anarchy (PoA) is given by:
\begin{equation*}
    \text{PoA}=\frac{< [I - \theta \Wb ]^{-2} \mathbf{1},\mathbf{1}>}{< [I - 2\theta \Wb ]^{-1} \mathbf{1},\mathbf{1}>}.
\end{equation*}
Note that the numerator will always be greater than or equal to the denominator, so if both quantities are positive, PoA will be a real number greater than or equal to $1$.
 
For the sake of illustration, we compute the PoA for a few graphons to further understand how the PoA for the graphon cities game can depend on the network connectivity. First, we consider a constant connection strength graphon as introduced in Section \ref{sec:constantconnection}, with strength $a$. Because $[\bW \mathbf{1}]_x = a$, we can find inductively that $[\bW^n \mathbf{1}]_x = a^n$. When $\theta a < 1$, the Nash equilibrium and its associated social cost are 
\[ \hat{\balpha} = \left( \frac{1}{1 - \theta a} \right) 1_x \: \: \mathrm{and} \: \: \mathcal{S}(\hat{\balpha}) = -\frac{k^2}{2} \left( \frac{1}{1 - \theta a} \right)^2.  \]
When $2 \theta a < 1$, the social optimum and its associated social cost are 
\[ \balpha^O = \left( \frac{1}{1 -  2 \theta a} \right) 1_x \: \: \mathrm{and} \: \: \mathcal{S}(\balpha^O) = -\frac{k^2}{2} \left( \frac{1}{1 - 2 \theta a} \right).  \]
Then one can compute that the price of anarchy for constant connection strength graphons when $2 \theta a < 1$ is given by 
\[ \mathrm{PoA} = \frac{1 - 2 \theta a}{\left(1 - \theta a \right)^2} = 1 - \frac{\theta^2 a^2}{\left(1 - \theta a \right)^2}. \]
We note that $\text{PoA}  = 1$ when $\theta a = 0$, which is the case when either individuals do not have network connections or their cost functions do not depend on interactions. We also note that $\text{PoA}  \to 0$ as $\theta a \to \tfrac{1}{2}$,
since the social optimum will feature larger and larger controls, and $\mathcal{S}(\balpha^O)$ blows up as a result. Furthermore, when $2 \theta \alpha < 1$, we find that 
\[ \frac{\partial}{\partial a} \mathrm{PoA} = \frac{\partial}{\partial \theta} \mathrm{PoA} = \left( \frac{2 \theta^2 a}{\left( 1 - 2 \theta a \right)^2} \right) \left[2 \theta a - 1 \right] < 0  \]
so the PoA is decreasing in both the benefit of social interactions parametrized by $\theta$ and the connection strength of the graphon, $a$.

Now we turn to graphons with heterogeneous degree distributions like the power-law or simple threshold graphons, to explore how the social costs and PoA depend upon the strength and distribution of connection strength. First, we consider the power-law graphon $w(x,y) = (x y)^{-\gamma}$. We saw earlier that for any integer $n\ge 1$, 
$$
[\bW^n\mathbf{1}]_x=\frac1{(1-\gamma)(1-2\gamma)^{n-1}}x^{-\gamma},
$$
which could be injected into the Taylor expansion of the resolvent to give:
$$
\bigl[ [I-2\theta\bW]^{-1}\mathbf{1}\bigr]_x=1+\frac{2\theta(1-2\gamma)}{(1-\gamma)(1-2\gamma-2\theta)}x^{-\gamma}
$$
and after integration over the unit interval we find:
$$
\cS(\balpha^O)=-\frac{k^2}{2}\Bigl(1+\frac{2\theta(1-2\gamma)}{(1-\gamma)^2(1-2\gamma-2\theta)}\Bigr).
$$
Using now the Taylor expansion of $[\bI-\theta\bW]^{-2}$ we compute:
\begin{eqnarray*}
\cS(\hat\balpha)&=&-\frac{k^2}{2}<[\bI-\theta\bW]^{-2}\mathbf{1},\mathbf{1}>\\
&=&-\frac{k^2}{2}<\sum_{n\ge 0}(n+1)\theta^n\bW^n\mathbf{1},\mathbf{1}>\\
&=&-\frac{k^2}{2}\bigl(1+<\sum_{n\ge 1}(n+1)\theta^n\bW^n\mathbf{1},\mathbf{1}>\bigr)\\
&=&-\frac{k^2}{2}\Bigl(1+\sum_{n\ge 1}(n+1)\theta^n\frac1{(1-\gamma)^2(1-2\gamma)^{n-1}}\Bigr)\\
&=&-\frac{k^2}{2}\Bigl(1+\frac{1-2\gamma}{(1-\gamma)^2}\sum_{n\ge 1}(n+1)\Bigl(\frac{\theta}{1-2\gamma}\Bigr)^n\Bigr)\\
&=&-\frac{k^2}{2}\Bigl(1+\frac{\theta(1-2\gamma)(2-4\gamma-\theta)}{(1-\gamma)^2(1-2\gamma-\theta)^2}\Bigr)
\end{eqnarray*}
and finally
$$
\text{PoA} =\frac{1+\frac{\theta(1-2\gamma)(2-4\gamma-\theta)}{(1-\gamma)^2(1-2\gamma-\theta)^2}}{1+\frac{2\theta(1-2\gamma)}{(1-\gamma)^2(1-2\gamma-2\theta)}}.
$$
We illustrate how the PoA varies with the parameters $\theta$ and $\gamma$ in Figure \ref{fig:powerlawPoA}.

 \begin{figure}
 \centering
 \hspace{-5mm}
 \includegraphics[width = 0.36\textwidth, height = 0.25\textheight]{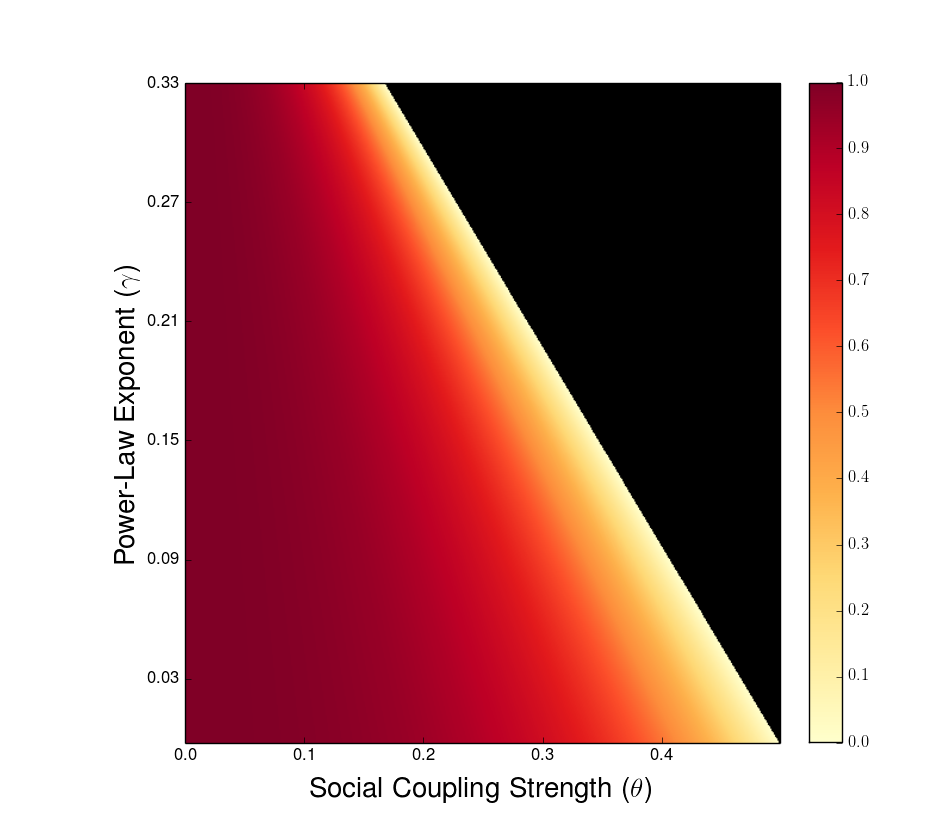}
 \hspace{-5mm}	
 \includegraphics[width=0.34\textwidth,height = 0.25\textheight]{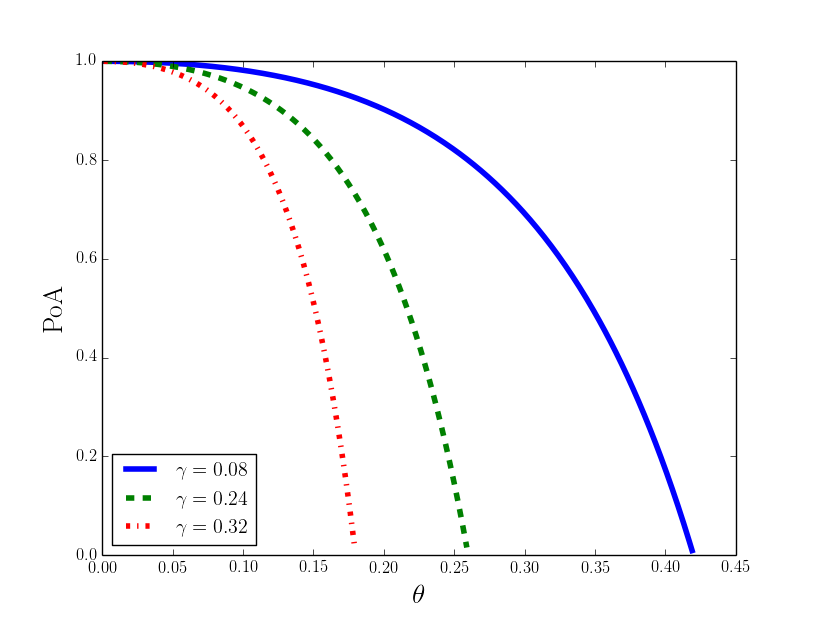}
 \hspace{-5mm}
 \includegraphics[width = 0.34\textwidth,height = 0.25\textheight]{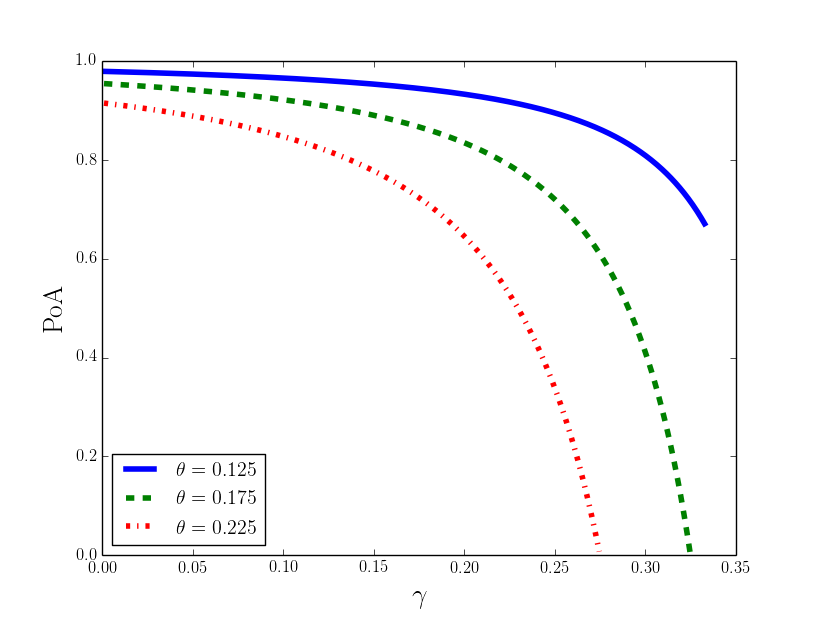}
 	
	\caption{Heatmap of PoA of cities game on power-law graphon for varying values of $\theta$ and $\gamma$. Region in black corresponds to cases in which $1 - 2 \gamma - 2 \theta < 0$ and the social optimum is not defined. }
	 \label{fig:powerlawPoA}
 \end{figure}

Next, consider a modification of the power-law graphon: $w(x,y) = g(\gamma) x^{-\gamma} y^{-\gamma}$, where $g(\gamma)$ can be used to normalize the connection strength across different power-law exponents $\gamma$. For this graphon, the strength of connection for an individual with index $x \in I$ and the average connection strength are respectively given by 
\begin{subequations} \label{eq:normalizepowerlaw} \begin{align} & \int_0^1 w(x,y) dy = \left( \frac{g(\gamma)}{1 - \gamma} \right) x^{-\gamma}, \\ &\int_0^1 \int_0^1 w(x,y) dx dy = \frac{g(\gamma)}{\left(1 - \gamma\right)^2}. \end{align} \end{subequations}
In particular, the choice of normalization $g(\gamma) = \left(1 - \gamma \right)^2$ yields an average connection strength of \\ $\int_0^1 \int_0^1 w(x,y) dx dy = 1$ for any $\gamma$, even though these connections are still distributed less uniformly for larger values of $\gamma$. With this normalization, we can study how the social cost and PoA can vary with the heterogeneity parameter $\gamma$ even though the total strength of connections is fixed amongst the population of players. 
For the generalized power-law graphon, we have the following Nash equilibrium and associated social cost 
\[\hat{\balpha} = 1 + \left( \frac{\theta g(\gamma)}{1 - \gamma} \right)\left(\frac{1 - 2 \gamma}{ 1 - 2 \gamma - \theta g(\gamma) } \right) x^{-\gamma} \: \: \mathrm{and} \: \: \mathcal{S}(\hat{\balpha}) = 1 + \frac{g(\gamma) \theta \left(1 - 2 \gamma \right) \left( 2 - 4 \gamma - \theta g(\gamma)\right)}{\left(1 - \gamma\right)^2 \left(1 - 2 \gamma - \theta g(\gamma) \right)^2},\]
provided that $1 - 2 \gamma - \theta g(\gamma) > 0$. If we further have that $1 - 2 \gamma - 2 \theta g(\gamma) < 0$, then we have a social optimum with associated social cost 
\[\balpha^O = 1 + \left( \frac{2 \theta g(\gamma)}{1 - \gamma} \right)\left(\frac{1 - 2 \gamma}{ 1 - 2 \gamma - 2 \theta g(\gamma) } \right) x^{-\gamma} \: \: \mathrm{and} \: \: \mathcal{S}(\hat{\balpha}) = 1 + \left( \frac{2 \theta g(\gamma)}{\left( 1 - \gamma \right)^2} \right) \left( \frac{1 - 2 \gamma}{1 - 2 \gamma - 2 \theta g(\gamma)} \right).\]
Using these social costs, we can compute the price of anarchy as 
\[ \text{PoA} = \left( 1+\frac{\theta(1-2\gamma)(2-4\gamma-\theta g(\gamma))}{(1-\gamma)^2(1-2\gamma-\theta g(\gamma))^2} \right) \bigg/ \left(1+\frac{2\theta g(\gamma) (1-2\gamma)}{(1-\gamma)^2(1-2\gamma-2\theta g(\gamma))} \right). \]
We can see the impact of $\gamma$ and $\theta$ on the PoA in Figures \ref{fig:powerlawPoAnormalized1} and \ref{fig:powerlawPoAnormalized2}. In particular, we notice that the social cost of both the Nash equilibrium $\mathcal{S}(\hat{\balpha})$ and the social optimum $\mathcal{S}(\balpha^O)$ are decreasing in $\gamma$, so more uneven distributions of social contacts improves the collective cost of the population of players.

However, the social cost of the social optimum decreases much more rapidly than the social cost of the Nash equilibrium, so we see that the PoA is also decreasing in $\gamma$, and the relative efficiency of the Nash equilibrium decreases as connectivity follows a steeper power law

 \begin{figure}[h!]
 \centering
 \hspace{-5mm}
 	\includegraphics[width = 0.48\textwidth]{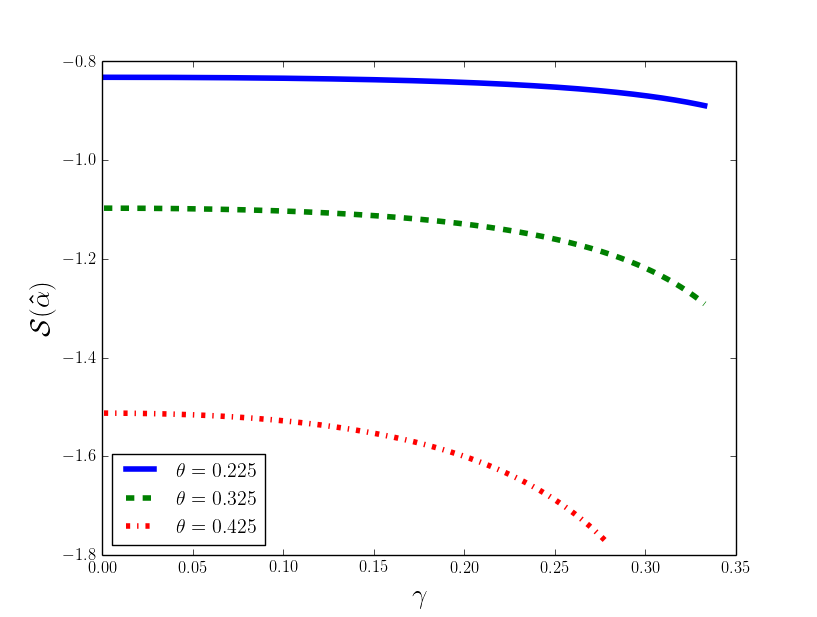}
 	\hspace{-5mm}
 	\includegraphics[width = 0.48\textwidth]{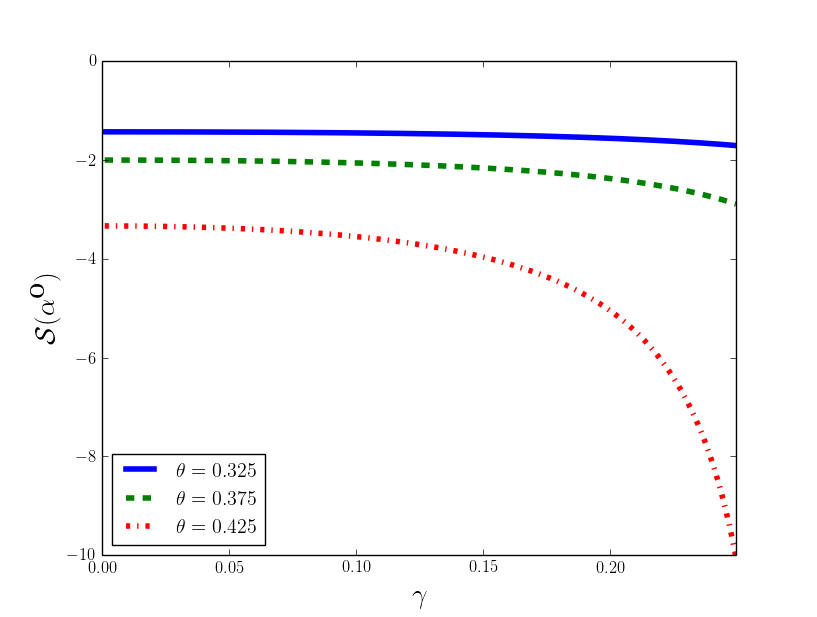}
	\caption{Social cost of Nash equilibrium (left) and social optimum (right) strategy profiles as a function of the power-law exponent $\gamma$ for the normalized power-law graphon.}
	 \label{fig:powerlawPoAnormalized1}
 \end{figure}

 \begin{figure}[h!]
 \centering
 \hspace{-5mm}
 	\includegraphics[width = 0.56\textwidth,height = 0.275\textheight]{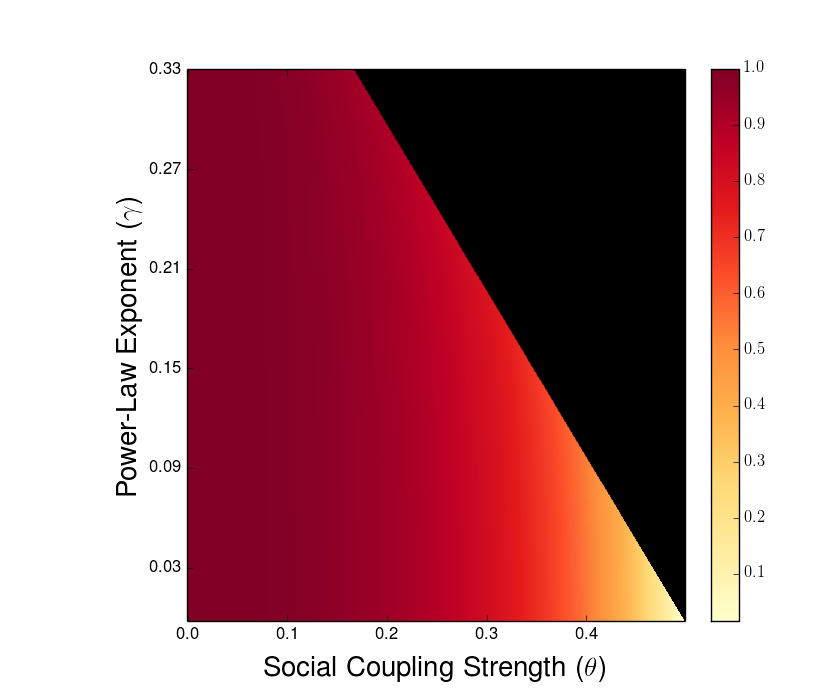}
 	\hspace{-5mm}
 	\includegraphics[width = 0.48\textwidth]{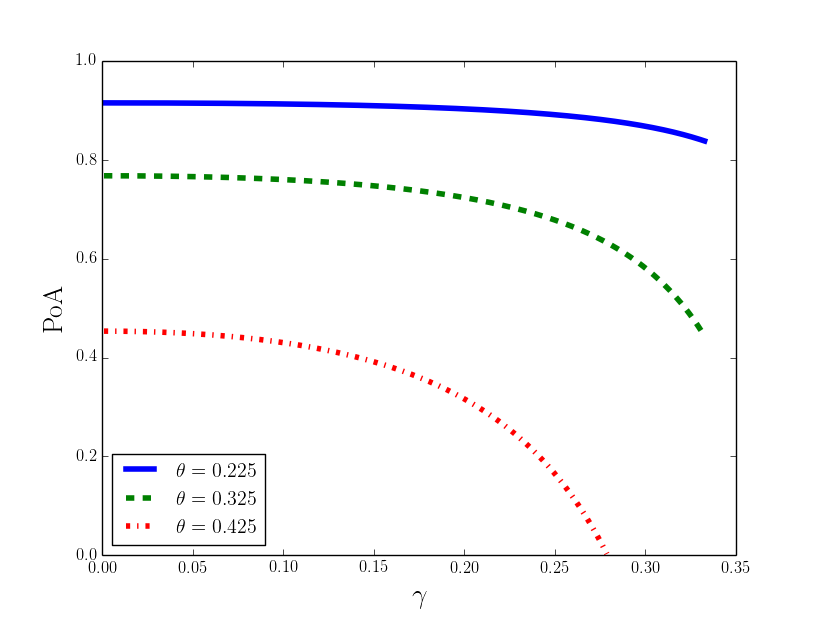}
	\caption{Heatmap of PoA of cities game on normalized power-law graphon for varying values of $\theta$ and $\gamma$. Region in black corresponds to cases in which $1 - 2 \gamma - 2 \theta g(\gamma) < 0$ and the social optimum is not defined. }
	 \label{fig:powerlawPoAnormalized2}
 \end{figure}

Now, we consider the simple threshold graphon, $w(x,y) = 1_{x + y \leq 1}$. We compute that the Nash equilbrium is given by 
\[ \hat{\alpha}_x = \left( \frac{k}{1 - \sin\left(\theta\right)} \right) \left[ \cos\left( \theta \left(1-x\right) \right) - \sin\left( \theta x \right) \right], \]
and the social optimal is given by
\[\balpha^{O}_x = \left( \frac{k}{1 - \sin\left( 2\theta \right)} \right) \left[ \cos\left( 2 \theta \left(1-x\right) \right) - \sin\left(2 \theta x \right) \right]. \]
Notice that for the simple threshold graphon, the index of the player directly relates to how connected they are with the other players, and the above two control profiles have different shapes with respect to the players' index, i.e.\ their level of connectivity.
Finally, we calculate that the PoA on the simple threshold graphon is given by
\[\mathrm{PoA} \: = \left( \frac{2 \theta}{1 - \sin(\theta)} \right) \left(\frac{1 - \sin \left(2 \theta\right)}{\cos(2 \theta) + \sin(2 \theta) - 1} \right)\]
where we can verify that $\lim_{\theta \to 0} \mathrm{PoA} = 1$ and that the PoA is a decreasing function of $\theta$, meaning that stronger benefits from social interactions causes greater inefficiency of the Nash equilibrium strategy profile. The PoA and social costs are shown in Figures \ref{fig:thresholdstrategyprofiles} and \ref{fig:thresholdPoA}. 

 \begin{figure}[h!]
 \centering
 \hspace{-5mm}
 	\includegraphics[width = 0.48\textwidth]{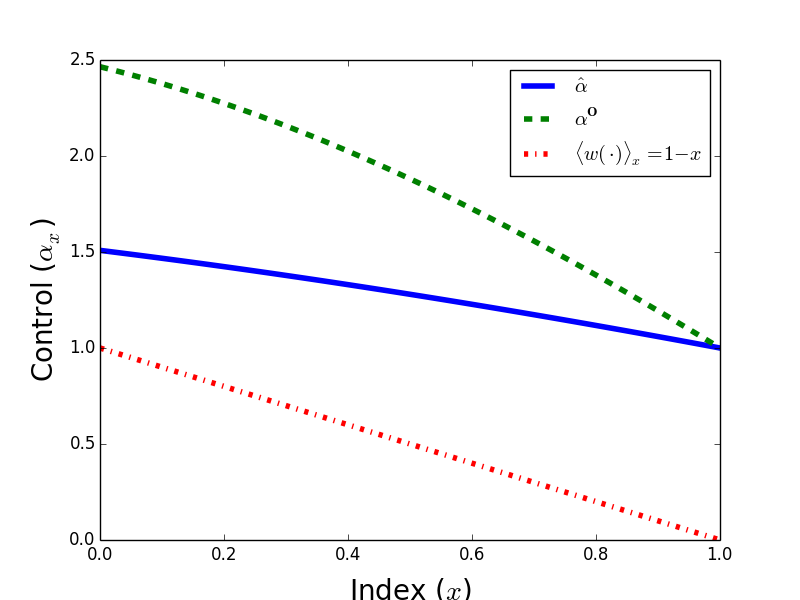}
 	\hspace{-5mm}
 	\includegraphics[width = 0.48\textwidth]{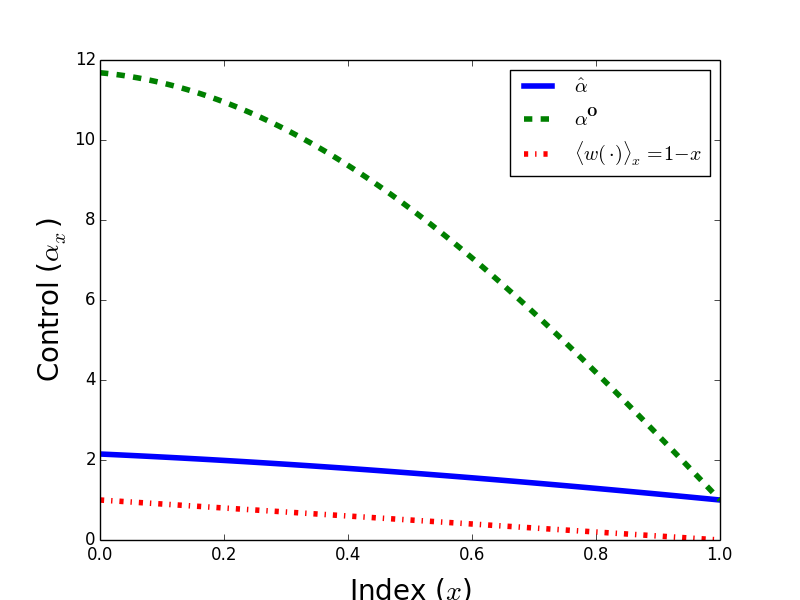}
	\caption{Nash equilibrium strategy profile (blue solid line), socially optimal strategy profile (green dashed line), and connection strength for individual with index $x$ (red dash-dotted line) for $\theta =0.4$ (left) and $\theta = 0.7$ (right).  }
	 \label{fig:thresholdstrategyprofiles}
 \end{figure}

 \begin{figure}[h!]
 \centering
 	\includegraphics[width = 0.48\textwidth]{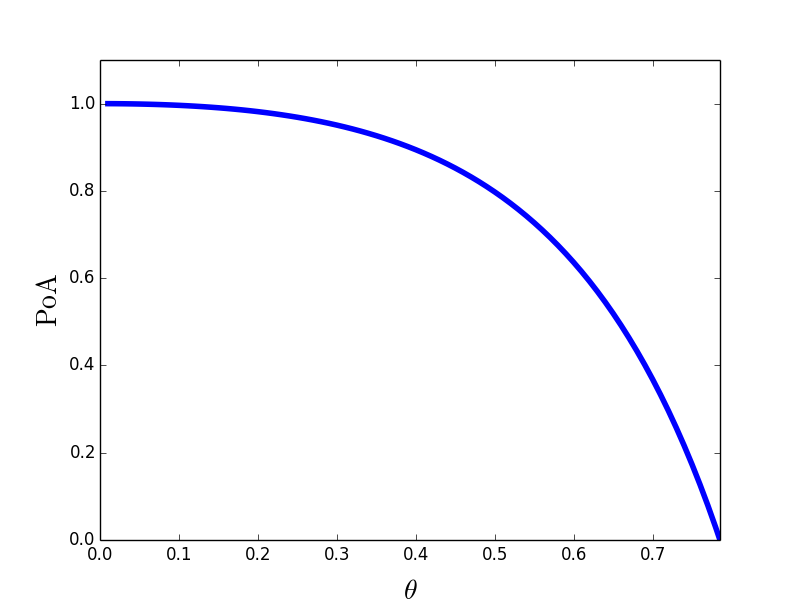}
 
	\caption{Price of anarchy as a function of $\theta$ for the threshold graphon.  }
	 \label{fig:thresholdPoA}
 \end{figure}

\subsection{Cournot Competition}
As before, we directly consider the model with a continuum of players. Here, the weight $w(x,y)$  quantifying the interaction between producers with indices $x$ and $y$ represents either the level of exchangeability of their products, or how much their consumer bases overlap. This model is more involved than the two preceding ones because, if $\balpha$ is the strategy profile giving the sales of all the producers, the price at which producer $x\in I$ will sell $\alpha_x$ units is given by:
\begin{equation*}
    X_{\alpha_x,[\bZ\balpha]_x,\xi_x}=a-b\alpha_x+c[\bZ\balpha]_x+\xi_x,
\end{equation*}
for positive constants $a$, $b$, and $c$, and where $\bZ\balpha$ solves
\begin{equation*}
  [\bZ\balpha]_x=\int_I w(x,y)\Bigl(a-b\alpha_y+c[\bZ\balpha]_y\bigr) dy, \qquad \lambda_I-a.e. x\in I.
\end{equation*}
Accordingly, producer $x$ will incur cost:
\begin{equation*}
    J_x(\alpha,\balpha)=\mathbb{E}\left[\frac{1}{2}\alpha^2-\alpha X_{\alpha,z_x,\xi_x}\right].
\end{equation*}
So in the notation of the paper, 
$$ 
b(\alpha,z)=a-b\alpha+cz, 
\qquad\text{and}\qquad
f(x,\alpha,z)=\frac{1}{2}\alpha^2-\alpha x.
$$
Notice that, in contrast with the previous examples, the function $b$ depends upon the aggregate variable $z$.

\begin{proposition}
    If $c(b+1)\|\bW\|<1$, there is a unique Nash equilibrium $\hat{\balpha}$ for the graphon game specified by the above state equation and cost. It is given by:
    \begin{equation*}
       \hat\balpha=\frac{a}{1+2b}[\bI-c(1+b)\bW]^{-1}\mathbf{1}.
    \end{equation*}
\end{proposition}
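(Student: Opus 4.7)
The plan is to derive the best-response map from the first-order condition on $\cJ(\alpha,z)$, substitute it into the fixed-point equation defining the aggregate $\bZ\hat\balpha$, exploit an algebraic simplification that decouples the two nested fixed-points, and finally invert the resulting linear operator on $L^2(I)$ using the assumption $c(1+b)\|\bW\|<1$.

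\textbf{Step 1 (best response).} Using $\EE[\xi_0]=0$, the cost collapses to
\begin{equation*}
\cJ(\alpha,z)=\EE\Bigl[\tfrac12\alpha^2-\alpha(a-b\alpha+cz+\xi_0)\Bigr]=\bigl(\tfrac12+b\bigr)\alpha^2-a\alpha-c\alpha z,
\end{equation*}
which is strongly convex in $\alpha$ with constant $\ell_c=1+2b$ and whose $\alpha$-derivative $(1+2b)\alpha-a-cz$ is $c$-Lipschitz in $z$ uniformly in $\alpha$. Hence Assumption~\ref{hyp:J_2} holds with $\ell_J=c$ and the unique minimizer is $\alpha^\star(z)=(a+cz)/(1+2b)$. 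By Proposition~\ref{pr:Nash_equivalence_continuum}, $\hat\balpha\in\mathbb{A}$ is a Nash equilibrium iff $(1+2b)\hat\alpha_x=a+c[\bZ\hat\balpha]_x$ for $\lambda_I$-a.e.\ $x\in I$.

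\textbf{Step 2 (decoupling the aggregate).} I would next plug the equilibrium condition into the fixed-point equation~\eqref{eq:z_definition} defining $\bZ\hat\balpha$. Solving $(1+2b)\hat\alpha_y=a+c[\bZ\hat\balpha]_y$ for $c[\bZ\hat\balpha]_y$ and substituting produces the key identity
\begin{equation*}
a-b\hat\alpha_y+c[\bZ\hat\balpha]_y=(1+b)\hat\alpha_y,
\end{equation*}
which makes the nested fixed-point collapse into
\begin{equation*}
[\bZ\hat\balpha]_x=(1+b)\int_I w(x,y)\hat\alpha_y\,dy=(1+b)[\bW\hat\balpha]_x.
\end{equation*}

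\textbf{Step 3 (solving the linear system).} Substituting $c[\bZ\hat\balpha]_x=c(1+b)[\bW\hat\balpha]_x$ back into the best-response identity yields the linear equation
\begin{equation*}
\bigl[(1+2b)\bI-c(1+b)\bW\bigr]\hat\balpha=a\mathbf{1}.
\end{equation*}
Since $c(1+b)\|\bW\|<1$ by hypothesis and $1+2b\geq 1$, one has $\|\tfrac{c(1+b)}{1+2b}\bW\|<1$, so $\bI-\tfrac{c(1+b)}{1+2b}\bW$ is invertible by a Neumann series, and consequently the whole operator $(1+2b)\bI-c(1+b)\bW$ is invertible on $L^2(I)$. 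Solving for $\hat\balpha$ produces the claimed explicit formula, and uniqueness follows automatically from linearity: any two equilibria must coincide since they satisfy the same invertible linear equation.

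\textbf{Main obstacle.} The only non-routine step is spotting the algebraic simplification of Step 2: a priori the equations for $\hat\balpha$ and $\bZ\hat\balpha$ are coupled through both the best-response and the implicit definition of the aggregate, but the coefficients conspire so that at equilibrium $\bZ\hat\balpha$ is simply a scalar multiple of $\bW\hat\balpha$, reducing everything to a single linear equation. One should also verify up front that $\bZ\hat\balpha$ is well defined via Proposition~\ref{pr:uniqueness_of_z}; for $b(\alpha,z)=a-b\alpha+cz$ one can take $c_z=c^2$, so $\sqrt{c_z}\|\bW\|=c\|\bW\|\leq c(1+b)\|\bW\|<1$, which is subsumed by the standing hypothesis.
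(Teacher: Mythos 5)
Your proposal is correct and follows essentially the same route as the paper's own proof: minimize $\cJ(\alpha,z)=(b+\tfrac12)\alpha^2-(a+cz)\alpha$ to get the best response $(a+cz)/(1+2b)$, substitute into the aggregate fixed point so that $a-b\hat\alpha_y+c[\Zb\hat\balpha]_y$ collapses to $(1+b)\hat\alpha_y$, and invert the resulting linear operator (your extra check that $c_z=c^2$ makes $\Zb$ well defined is a welcome addition the paper leaves implicit). Note only that the equation $[(1+2b)\bI-c(1+b)\bW]\hat\balpha=a\mathbf{1}$ yields $\hat\balpha=\tfrac{a}{1+2b}[\bI-\tfrac{c(1+b)}{1+2b}\bW]^{-1}\mathbf{1}$, which is exactly what the paper's own proof concludes, so your remark that this "produces the claimed explicit formula" papers over a typo in the proposition's displayed statement (where the operator coefficient is missing the $1+2b$ denominator) rather than reflecting any error in your argument.
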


\begin{proof}
    Expanding the cost, we have:
    \begin{equation*}
        \mathcal{J}(\alpha,z)=\left(b+\frac{1}{2}\right)\alpha^2-\left(a+cz\right)\alpha,
    \end{equation*}
    which is minimized for given $z$ for:
    $$
    \hat\alpha=\frac{a+cz}{1+2b}.
    $$
    So $\hat\balpha$ will be a Nash equilibrium if for $\lambda_I$ almost every $x\in I$ we have 
    \begin{equation*}
        \hat{\alpha}_x=\frac{c}{1+2b}[\Zb\hat{\balpha}]_x+\frac{a}{1+2b},
    \end{equation*}
    which gives:
    \begin{equation}
\label{fo:Z_hat_alpha}        
[\Zb\hat{\balpha}]_x=\frac{1+2b}{c}\hat{\alpha}_x-\frac{a}{c}.
    \end{equation}
    Using \eqref{eq:connection_W_A}, the definition of the graphon operator and \eqref{fo:Z_hat_alpha}  we have:
    \begin{equation*}
        \frac{1+2b}{c}\hat{\balpha}-\frac{a}{c}\mathbf{1}=\bW \left(a\mathbf{1}-b \hat{\balpha}+c\Bigl(\frac{1+2b}{c}\hat{\balpha}-\frac{a}{c}\mathbf{1}\Bigr)\right)=(1+b)\bW \hat{\balpha}
\end{equation*}
    and we conclude
    $$
    \hat\balpha=\frac{a}{1+2b}[\bI-\frac{c(1+b)}{1+2b}\bW]^{-1}\mathbf{1}
    $$
    because the inverse exists because of our assumptions.
\end{proof}
Again, the equilibrium strategy profile is given by the Katz centrality measure of the graphon, and it can be computed explicitly for the examples of graphon considered above.
 
\section{Conclusion} 
\label{sec:conclusion}
To summarize, we have rigorously defined a class of non-atomic games called graphon games, making use of the limit objects for sequences of dense graphs. We proved existence and uniqueness of Nash equilibria for graphon games and we have made rigorous connections to finite-player network games in two directions: 1) the controls from the finite-player network games converge in some sense to the controls of the graphon game, and 2) the controls from the graphon game can be used to construct an $\epsilon$-Nash equilibrium for the finite-player network games. We have also made rigorous connections to mean field games when the graphon has certain symmetrical properties. Finally, we motivated our study with a few applications for which we showed explicit computations of the Nash equilibria, as well as, at least for one of them, the social optimal strategies computed by a central planner.


\end{document}